\theoremstyle{definition}
 \newtheorem{dfn}{Definition}
 \newtheorem{remark}[dfn]{Remark}
\theoremstyle{plain}
 \newtheorem{thm}[dfn]{Theorem}
 \newtheorem{lem}[dfn]{Lemma}
\numberwithin{equation}{section}
\newcommand{\bn}{{\bold n}}
\newcommand{\ba}{{\bold a}}
\newcommand{\bb}{{\bold b}}
\newcommand{\bd}{{\bold d}}
\newcommand{\bu}{{\bold u}}
\newcommand{\bv}{{\bold v}}
\newcommand{\bw}{{\bold w}}
\newcommand{\bff}{{\bold f}}
\newcommand{\be}{{\bold e}}
\newcommand{\bg}{{\bold g}}
\newcommand{\bh}{{\bold h}}
\newcommand{\bk}{{\bold k}}
\newcommand{\bp}{{\bold p}}
\newcommand{\bA}{{\bold A}}
\newcommand{\bB}{{\bold B}}
\newcommand{\bD}{{\bold D}}
\newcommand{\bF}{{\bold F}}
\newcommand{\bG}{{\bold G}}
\newcommand{\bH}{{\bold H}}
\newcommand{\bI}{{\bold I}}
\newcommand{\bK}{{\bold K}}
\newcommand{\bV}{{\bold V}}
\newcommand{\DV}{{\rm Div}\,}
\newcommand{\dv}{{\rm div}\,}
\newcommand{\BR}{{\Bbb R}}
\newcommand{\BC}{{\Bbb C}}
\newcommand{\BN}{{\Bbb N}}
\newcommand{\BT}{{\Bbb T}}
\newcommand{\BZ}{{\Bbb Z}}
\newcommand{\CA}{{\mathcal A}}
\newcommand{\CB}{{\mathcal B}}
\newcommand{\CD}{{\mathcal D}}
\newcommand{\CF}{{\mathcal F}}
\newcommand{\CI}{{\mathcal I}}
\newcommand{\CL}{{\mathcal L}}
\newcommand{\CM}{{\mathcal M}}
\newcommand{\CR}{{\mathcal R}}
\newcommand{\CS}{{\mathcal S}}
\newcommand{\CT}{{\mathcal T}}
\newcommand{\CH}{{\mathcal H}}
\newcommand{\CP}{{\mathcal P}}
\newcommand{\CX}{{\mathcal X}}
\newcommand{\fp}{{\frak p}}
\newcommand{\fq}{{\frak q}}
\newcommand{\fr}{{\frak r}}
\newcommand{\pd}{\partial}
\newenvironment{cases*}%
{%
\left\{
\begin{array}{@{}r@{\;}l@{\quad}l@{}}
}%
{\end{array}\right.}
\begin{document}
\title{On periodic solutions for one-phase and  
two-phase problems of the Navier-Stokes equations}
\author{
Thomas EITER \thanks{
\small{Fachbereich Mathematik, 
Technische Universit\"at Darmstadt, 
Schlossgartenstr.~7, 64289 Darmstadt, Germany, 
{\small e-mail address: eiter@mathematik.tu-darmstadt.de}}}, 
Mads KYED \thanks{Hochschule Flensburg, 
Kanzleistra\ss e 91--93, 24943 Flensburg,
Germany, 
\small{e-mail address: mads.kyed@hs-flensburg.de}}, 
and 
Yoshihiro SHIBATA
\thanks{Partially supported by  
JSPS Grant-in-aid for Scientific Research (A) 17H0109,
and Top Global University Project
\endgraf
Department of Mathematics, Waseda University,
Ohkubo 3-4-1, Shinjuku-ku, Tokyo 169-8555, Japan, 
\small{e-mail address: yshibata325@gmail.com}
\endgraf
Adjunct Faculty member in the Department of Mechanical Engineering and 
Materials Science, University of Pittsburgh.}}
\maketitle
\begin{abstract}
This paper is devoted to proving the existence of time-periodic solutions
of  one-phase or two-phase  problems for the Navier-Stokes equations 
with small periodic external forces when the reference domain is close to 
a ball. Since our problems are  formulated in  time-dependent 
unknown domains, the problems are reduced to quasiliner systems of parabolic equations with
non-homogeneous boundary conditions or transmission conditions
in fixed domains by using the so-called
Hanzawa transform.   
We separate solutions into the stationary part and the oscillatory part.
The linearized equations for the stationary part have eigen-value $0$,
which is avoided by changing the equations with the help of the necessary conditions 
for the existence of solutions to the original problems.  To treat the 
oscillatory part, we establish the maximal $L_p$-$L_q$ regularity theorem
of the periodic solutions 
for the system of parabolic equations with non-homogeneous boundary conditions or
transmission conditions, which is obtained
by the systematic use of $\CR$-solvers developed in \cite{S1, S2, S4, S3} to the resolvent problem for the linearized
equations and the transference theorem obtained in \cite{EKS1}
for the $L_p$ boundedness of 
operator-valued Fourier multipliers. These approaches are the novelty of this paper. 

\end{abstract}

\section{Introduction}\label{sec:1}

This paper is concerned with time-periodic solutions of one-phase and two-phase problems for 
the Navier-Stokes equations. The periodic solutions for the Navier-Stokes
equations have been studied in many articles \cite{GK, GS, GHN, HMT, HNS, KN, 
M91, MP, OT, Salvi, Serrin, Ta99, Y00} and references therein.  One well-known
approach to prove the existence of periodic solutions is  the utilization of the Poincar\'e
operator, which maps an initial value into the solution of the
PDE at time $\CT$, where $\CT$ is the period of the data. A fixed point of the
Poincar\'e operator yields an initial value that induces a $\CT$-time-periodic 
solution.  Such a utilization of the Poincar\'e operator is naturally carried out under the 
global well-posedness of the corresponding
initial-boundary value problem for the bounded data on the right hand side of the equations.
In the bounded domain case, this is deeply related with the situation where
$0$ does not belong to the spectrum of the system of the linearized equations. 
However, in many interesting problems in mathematical physics, we meet the situation that
$0$ is in the spectrum.  One-phase or two-phase problems for
the Navier-Stokes equations are typical examples. As explained in Sections \ref{sec:1}
and \ref{sec:2}
below, the one-phase and two-phase problems we treat in this paper are formulated by
the Navier-Stokes equations with free boundary condition or transmission condition
on the interface in a time-dependent domain $\Omega_t$, which is also unknown.
Usually, $\Omega_t$ is transformed to a fixed domain $\Omega$ by introducing 
an unknown function representing the boundary or the interface of $\Omega_t$.
Thus, the problem treated here becomes a quasilinear system of equations with nonlinear
boundary or transmission conditions. The first of our key approaches is to separate
solutions into stationary part and oscillatory part. Then, the zero eigen-value 
of the linearized equations appears only in the equations for the stationary problem. 
We change the linearized equations
by using some necessary conditions for the unique existence of solutions to avoid
eigen-value $0$ for the linearized problem.  This technique is possible under the 
separation of the stationary part and the oscillatory part, which does not appear when working with
the Poincar\'e operator. 
The second is to introduce a systematic approach to the maximal $L_p$-$L_q$ regularity for the 
oscillatory part based solely on the $\CR$-solver for the resolvent problem of the
linearized equations developed in \cite{S1, S2, S3, S4, SS1} 
and a transference theorem for the $L_p$ boundedness of the
operator-valued Fourier multiplier due to Eiter, Kyed and Shibata  in \cite{EKS1}.  
The $L_p$-$L_q$ maximal regularity for 
the oscillatory part of solutions is necessary because our problem is 
a quasilinear system with non-homogeneous boundary conditions.  Since
the maximal regularity for the oscillatory part of the periodic solutions does not
seem to be well-studied,  our systematic approach gives a quite important contribution to
the study of systems of parabolic equations with non-homogeneous boundary conditions,
which is  the novelty of this paper.

\subsection{One-phase problem} 
Let $\Omega_t$ be a time-dependent domain in the $N$-dimensional 
Euclidean space $\BR^N$ ($N \geq 2$). 
Let $\Gamma_t$ be the boundary of $\Omega_t$ and $\bn_t$ the unit
outer normal to $\Gamma_t$.  We assume that $\Omega_t$ is occupied by some
incompressible viscous fluid of unit mass density whose viscosity coefficient is
a positive constant $\mu$.  Let $\bu = {}^\top(u_1(x, t), \ldots, u_N(x,t))$,
$x = (x_1, \ldots, x_N)\in \Omega_t$, and $\fp=\fp(x, t)$ be
the velocity field and the pressure field in $\Omega_t$, respectively, where ${}^\top M$ denotes
the transposed of $M$. We consider the Navier-Stokes equations in $\Omega_t$
with free boundary condition as follows:
\begin{equation}\label{eq:1p}\left\{\begin{aligned}
&\pd_t\bu + \bu\cdot\nabla\bu - \DV(\mu\bD(\bu) - \fp\bI)  = \bff
&\quad&\text{in $\Omega_t$}, \\
&\qquad \dv \bu  = 0 
&\quad&\text{in $\Omega_t$}, \\
&(\mu\bD(\bu)-\fp\bI)\bn_t  = \sigma H(\Gamma_t)\bn_t 
&\quad&\text{on $\Gamma_t$}, \\
&\qquad V_{\Gamma_t} = \bu\cdot\bn_t
&\quad&\text{on $\Gamma_t$}
\end{aligned} \right.\end{equation}
for $t \in \BR$.  Here, $\bff = \bff(x, t)$ 
is a prescribed time-periodic external force with period $2\pi$;  
$H(\Gamma_t)$ denotes the $(N-1)$-fold mean curvature of 
$\Gamma_t$ which is given by $H(\Gamma_t)\bn_t = \Delta_{\Gamma_t}x$
for $x \in \Gamma_t$, where $\Delta_{\Gamma_t}$ is the Laplace-Beltrami
operator on $\Gamma_t$; $V_{\Gamma_t}$ is the evolution speed of $\Gamma_t$
along $\bn_t$; 
$\sigma$ is a positive 
constant representing  the surface tension coefficient; $\bD(\bu)$ is the doubled
deformation tensor given by $\bD(\bu) = \nabla\bu + {}^\top\nabla\bu$; and $\bI$ is the 
$(N\times N)$-identity matrix.  Moreover, for any $(N\times N)$-matrix of 
functions $\bK$ whose $(i, j)^{\rm th}$ component is $K_{ij}$, 
$\DV K$ is an $N$-vector whose $i^{\rm th}$ component is 
$\sum_{j=1}^n\pd_jK_{ij}$ and for any $N$-vector of functions 
$\bv = {}^\top(v_1, \ldots, v_N)$, $\bv\cdot\nabla\bv$ is an 
$N$-vector of functions whose $i^{\rm th}$ component is 
$\sum_{j=1}^Nv_j\pd_j v_i$, where $\pd_j=\pd/\pd x_j$.

Our problem is to find  $\Omega_t$,  $\Gamma_t$, $\bu$ and $\fp$ satisfying the periodic
condition:
\begin{equation}\label{eq:p1}\Omega_t = \Omega_{t+2\pi}, \quad \Gamma_t = \Gamma_{t+2\pi},
\quad 
\bu(x, t) = \bu(x, t+2\pi), \quad \fp(x, t) = \fp(x, t+2\pi)
\end{equation}
for any $t \in \BR$. 

To state the main result, we introduce assumptions and some functional spaces. Let $\bp_i = \be_i
={}^T(0, \ldots, 0, \overset{\rm i -th}1, 0, \ldots, 0)$ for $i=1, \ldots, N$ and $\bp_\ell$ ($\ell
=N+1, \ldots, M)$ be one of $x_i\be_j - x_j\be_i$ ($1 \leq i< j \leq N$).  Notice that 
$\bp_\ell$ forms a basis of the rigid space $\{ \bv \mid \bD(\bv) = 0\}$ and the number $M$ is 
its dimension.  We will construct
$\Omega_t$ satisfying the following two conditions:
\begin{gather}
\det\Bigl(\int^{2\pi}_0(\bp_\ell, \bp_m)_{\Omega_t}\,dt\Bigr)_{\ell, m = 1, \ldots, M}
\not=0, \label{assump:1} \\
\int^{2\pi}_0\Bigl(\int_{\Omega_t} x\,dx\Bigr)\,dt = 0,  \label{assump:3}\\
|\Omega_t| = |B_R| \quad\text{for any $t \in (0, 2\pi)$}. \label{assump:5}
\end{gather}
Here and in the following, $(M_{\ell,m})_{\ell, m=1,\ldots, N}$ denotes an $(N\times N)$-matrix whose
$(\ell, m)^{\rm th}$ component is $M_{\ell, m}$; 
for any domain $G$ and $(N-1)$-dimensional hypersurface $S$, we let 
$$(f, g)_{G} = \int_G f(x)\cdot\overline{g(x)}\,dx, \quad 
(f, g)_{S}= \int_{S} f(x)\cdot\overline{g(x)}\,d\sigma,$$
where $\overline{g(x)}$ denotes the complex conjugate of $g(x)$,  and $d\sigma$ the surface element
of $S$. $|G|$ denotes the Lebesgue measure of a Lebesgue measurable set $G$ of $\BR^N$; and
$B_R$ is the ball with radius $R$ centered at the origin. 
For $1 < p < \infty$ and any Banach space $X$ with norm $\|\cdot\|_X$, let 
\begin{align*}
L_{p, {\rm per}}((0, 2\pi), X) & = \{f : \BR \to X  \mid \text{$\|f(\cdot)\|_X \in L_{1, {\rm loc}}(\BR)$},  \\
&f(t+ 2\pi) = f(t)  \quad\text{for any $t \in \BR$}, \quad
 \|f\|_{L_p((0, 2\pi), X)} = \Bigl(\int^{2\pi}_0\|f(t)\|_X^p\,dt\Bigr)^{1/p} < \infty\}, \\
H^1_{p, {\rm per}}(0, 2\pi), X) & = \{f: \BR \to X \mid \text{$\|f(t)\|_X \in L_{1, {\rm loc}}(\BR)$ 
and $\|\dot f(t)\|_X \in L_{1, {\rm loc}}(\BR)$}, \\
&f(t) = f(t+2\pi), \enskip
\dot f(t) = \dot f(t+2\pi) \quad\text{for any $t \in \BR$}, \\
&\|f\|_{H^1_p((0, 2\pi), X)} = \Bigl(\int^{2\pi}_0(\|f(t)\|_X^p
+ \|\dot f(t)\|_X^p)\,dt\Bigr)^{1/p} < \infty\},
\end{align*}
where $\dot f$ denotes the derivative of $f$ with respect to $t$. Let 
$$\|f\|_{L_p((0, 2\pi), X)} = \Bigl(\int^{2\pi}_0\|f(t)\|_X^p\,dt\Bigr)^{1/p},
\quad 
\|f\|_{H^1_p((0, 2\pi), X)} = \|f\|_{L_p((0, 2\pi), X)} + \|\dot f\|_{L_p((0, 2\pi), X)}.
$$
For any domain $G$ in $\BR^N$ and $1 \leq q \leq \infty$, 
 $L_q(G)$, $H^m_q(G)$,  and 
$B^s_{q,p}(G)$  
denote the standard Lebesgue, Sobolev, and 
Besov spaces on $G$, 
and  $\|\cdot\|_{L_q(G)}$, 
$\|\cdot\|_{H^m_q(G)}$, and  
$\|\cdot\|_{B^s_{q,p}(G)}$ denote their respective norms. 
For any  integer $d$, $X^d$ denotes the $d$-fold product of the space $X$, that is
$X^d = \{\bg = {}^\top(g_1, \ldots, g_d) \mid g_j \in X \enskip(j=1, \ldots, d)\}$, while 
the norm of $X^d$ is denoted by $\|\cdot\|_X$ instead of $\|\cdot\|_{X^d}$ for simplicity.

The following theorem is our main result concerning time-periodic solutions of the one-phase problem 
for the Navier-Stokes equations.
\begin{thm}\label{thm:main1} Let $1 < p, q < \infty$ and $2/p + N/q < 1$.  Let $D\subset B_R$ 
be a domain. Then, there exists 
a positive constant $\epsilon$ and an injective map $x = \Phi(y, t) :  
B_R \to \BR^N$ for each $t \in (0, 2\pi)$ with
$$\Phi  \in L_{p, {\rm per}}((0, 2\pi), H^3_q(B_R)^N) \cap H^1_{p, {\rm per}}((0, 2\pi),
H^2_q(B_R))$$
for which the following assertion holds: 
If $\bff \in L_{p, {\rm per}}((0, 2\pi), L_q(D)^N)$ satisfies the support condition: 
${\rm supp}\,\bff(\cdot, t) \subset D$ for any $t \in (0, 2\pi)$, 
the orthogonal condition
\begin{equation}\label{assump:4}
\int^{2\pi}_0 (\bff(\cdot, t), \bp_\ell)_D \,dt = 0 \quad\text{for $\ell=1, \ldots, M$}, 
\end{equation}
and the smallness condition:
$\|\bff\|_{L_p((0, 2\pi), L_q(D)^N)} \leq \epsilon$, then there exist  
 $\bv(y, t)$,  $\fq(y, t)$, and $\rho(y, t)$ with
\begin{equation}\label{reg:1}\begin{aligned}
\bv &\in L_{p, {\rm per}}((0, 2\pi), H^2_q(B_R)^N) \cap H^1_{p, {\rm per}}((0, 2\pi), L_q(B_R)^N), \\
\fq & \in L_{p, {\rm per}}((0, 2\pi), H^1_q(B_R)), \\
\rho & \in L_{p, {\rm per}}((0, 2\pi), W^{3-1/q}_q(B_R)^N) \cap 
H^1_{p, {\rm per}}((0, 2\pi), W^{2-1/q}_q(S_R)),
\end{aligned}\end{equation}
such that  
\begin{gather*}
\Omega_t = \{x =\Phi(y, t) \mid y \in B_R\}, \quad 
\bu(x, t) = \bv(\Phi^{-1}(x, t), t), \quad \fp(x, t) = \fq(\Phi^{-1}(x, t), t),
\end{gather*}
where $\Phi^{-1}(x, t)$ is the inverse map of the correspondence: 
$x = \Phi(y, t)$ for any $t \in (0, 2\pi)$, 
are solutions of equations \eqref{eq:p1} satisfying the periodicity condition
\eqref{eq:p1}, and  $\Gamma_t$ is given by 
$$\Gamma_t = \{x = y + R^{-1}\rho(y, t) y+ \xi(t) \mid y \in S_R\},$$
where $\xi(t)$ is the barycenter point of $\Omega_t$ defined by setting 
$$\xi(t) = \frac{1}{|\Omega_t|}\int_{\Omega_t} x\,dx.$$
Moreover, $\bv$ and $\rho$ satisfy the estimate:
\begin{equation}\label{est:int1}\begin{aligned}
&\|\bv\|_{L_p((0, 2\pi), H^2_q(B_R))} + \|\pd_t\bv\|_{L_p((0, 2\pi), L_q(B_R))} \\
&\quad+ \|\rho\|_{L_p((0, 2\pi), W^{3-1/q}_q(S_R))} + \|\pd_t\rho\|_{L_p((0, 2\pi), W^{2-1/q}_q(S_R))}
+\|\pd_t \rho\|_{L_\infty((0,2\pi),W^{1-1/q}_q(S_R))} 
\leq C\epsilon
\end{aligned}\end{equation}
for some constant $C$ independent of $\epsilon$.

\end{thm}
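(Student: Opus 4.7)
The strategy is to reduce the free boundary problem on the moving domain $\Omega_t$ to a quasilinear system on the fixed ball $B_R$, to split the resulting unknowns into a stationary (time-mean) part and an oscillatory (zero-mean) part, and to close a contraction argument in a small ball of the solution space \eqref{reg:1}. First I would introduce a Hanzawa transform of the form
\[
 x = \Phi(y,t) = y + R^{-1}\,\omega(\rho)(y,t)\,y + \xi(t),
\]
where $\omega$ is a suitable extension to $B_R$ of the height function $\rho$ defined on $S_R$ (using a cut-off supported away from the origin) and $\xi(t)$ is adjusted to coincide with the barycenter of $\Omega_t$; the volume constraint \eqref{assump:5} will be imposed on $\rho$ by fixing its integral mean on $S_R$. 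Under this change of variables, the pair $(\bu,\fp)$ pulls back to $(\bv,\fq)$ on $B_R$ and system \eqref{eq:1p} becomes a quasilinear perturbation of a constant-coefficient linear parabolic system in $B_R$ for $(\bv,\fq,\rho)$ with inhomogeneous Neumann-type boundary conditions of Stokes form together with a kinematic condition $\partial_t\rho = \bv\cdot\bn + (\text{nonlinear})$ on $S_R$.

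Next I would split any periodic function $f$ on $(0,2\pi)$ as $f = \bar f + f_\perp$ with $\bar f = (2\pi)^{-1}\int_0^{2\pi}f\,dt$ and $f_\perp$ the zero-mean oscillatory part, and decompose $(\bv,\fq,\rho) = (\bar\bv,\bar\fq,\bar\rho) + (\bv_\perp,\fq_\perp,\rho_\perp)$. For the oscillatory part, time integration of the equations kills constants, so one lands on the linear periodic Stokes system with free boundary. Its maximal $L_p$-$L_q$ regularity will be obtained, as announced in the introduction, by combining the $\mathcal{R}$-solver for the corresponding resolvent problem on $B_R$ constructed in \cite{S1,S2,S3,S4,SS1} with the transference theorem of \cite{EKS1} for operator-valued Fourier multipliers on $\mathbb{Z}$; this provides a bounded solution operator from data in $L_{p,\mathrm{per}}$ to the regularity class \eqref{reg:1} restricted to zero-mean functions, and absorbs the low-order nonlinear terms via the embedding $2/p + N/q < 1$.

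For the stationary part one obtains a time-independent linear Stokes problem on $B_R$ with a free boundary condition involving $\sigma H$. The linearized problem admits the rigid motions $\bp_\ell$ as a finite-dimensional kernel/cokernel, so $0$ is an eigenvalue; this is exactly where the assumptions \eqref{assump:1}--\eqref{assump:5} and the orthogonality \eqref{assump:4} enter. I would project onto the $\bp_\ell$-directions and use \eqref{assump:4} together with the translation and volume normalizations \eqref{assump:3}, \eqref{assump:5} as solvability/gauge conditions, modifying the linear system by subtracting suitable rigid-motion components (determined uniquely thanks to the nondegeneracy \eqref{assump:1}) to obtain an invertible linear operator on the orthogonal complement. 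This yields a bounded right inverse in the $H^2_q\times H^1_q\times W^{3-1/q}_q$ norm for the stationary problem.

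Combining these two linear results, the full nonlinear problem becomes $(\bv,\fq,\rho) = \Psi(\bv,\fq,\rho;\bff)$, where $\Psi$ is the sum of the stationary and oscillatory solution operators applied to the nonlinear right-hand side generated by the Hanzawa transform. A Banach fixed point argument in the ball $\{\|(\bv,\rho)\| \le C\epsilon\}$ of the space \eqref{reg:1} then gives existence; the smallness of $\bff$ ensures that the nonlinear terms (products of $\bv$, $\nabla\bv$, $\rho$ and their derivatives, plus the geometric perturbation of $\mathrm{div}$, $\bD$, $H(\Gamma_t)$ and $\bn_t$) are controlled via Hölder-type estimates and the embedding $B^{2(1-1/p)}_{q,p} \hookrightarrow L_\infty$ coming from $2/p + N/q<1$. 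The estimate \eqref{est:int1} then follows from the bounded inverses plus the quadratic nature of the nonlinearity. The main obstacle I expect is the careful treatment of the stationary problem: identifying exactly which rigid-motion components must be subtracted so that \eqref{assump:4} makes the modified system solvable, and checking that this modification is compatible with the nonlinear correction and with the barycenter/volume gauges on $\rho$. A secondary nontrivial point is verifying that the oscillatory-part solution operator returns to the full regularity of $\rho$ in \eqref{reg:1}, which requires reading the kinematic boundary condition through the $\mathcal{R}$-solver and the transference theorem with the correct trace spaces.
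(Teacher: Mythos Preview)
Your plan matches the paper's approach closely: Hanzawa transform to the fixed ball, stationary/oscillatory splitting, $\mathcal{R}$-solver plus transference for the oscillatory part, a modified stationary problem to avoid eigenvalue $0$, and a contraction argument in the regularity class \eqref{reg:1}. The overall architecture is correct.

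One point you should sharpen, since you flagged it as the main obstacle: the eigenvalue $0$ appears for \emph{two} independent reasons, not just one. On the velocity side the rigid motions $\bp_\ell$ form the kernel, as you say; but on the height-function side the linearized curvature operator $\CB_R\rho = (\Delta_{S_R} + (N-1)R^{-2})\rho$ also has a nontrivial kernel spanned by $1, x_1,\dots,x_N$ on $S_R$ (zeroth and first spherical harmonics). The paper handles both not by restricting to an orthogonal complement but by \emph{adding} explicit lower-order terms to the equations: $\CL\bv_S = \sum_k(\bv_S,\bp_k)_{B_R}\bp_k$ in the momentum equation and $\CM\rho = (\rho,1)_{S_R} + \sum_k(\rho,\omega_k)_{S_R}y_k$ in the kinematic equation. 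These extra terms make the linear problems uniquely solvable outright; one then checks a posteriori that they vanish on the actual solution because of \eqref{assump:4} (for $\CL$) and the volume/barycenter identities \eqref{assump:3}, \eqref{assump:5} (for $\CM$, via the exact polynomial identities coming from $|\Omega_t|=|B_R|$ and $\int_{\Omega_t}(x-\xi)\,dx=0$). Your ``project and work on the complement'' description would also work in principle, but the additive-modification route is cleaner to combine with the nonlinear fixed point and avoids having to track compatibility conditions through the iteration.

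A minor second point: for the oscillatory part the paper does not apply the $\mathcal{R}$-solver uniformly over all nonzero frequencies. The $\mathcal{R}$-bounds are only available for $|k|\geq k_0$; the finitely many low frequencies $1\leq |k|\leq k_0+3$ are handled separately by a Fredholm argument (compact resolvent plus uniqueness in $L_2$). You should expect this split when you carry out the details.
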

\begin{remark} In the construction of the map $\Phi$, we see that 
$\Phi(y,t) = y + R^{-1}\rho(y, t) + \xi(t)$ for $y \in S_R$.
\end{remark}
\subsection{Two-phase problem}
  Let $\Omega_{+t}$ be a time-dependent 
domain in the $N$-dimensional Euclidean space $\BR^N$. 
Let $\Gamma_t$ be the 
boundary of $\Gamma_t$ and $\bn_t$ its unit outer normal.
Let $\Omega$ be a bounded domain in $\BR^N$ and $S$  the boundary of 
$\Omega$. We assume that $\Omega_{+t} \subset \Omega$ and $\Gamma_t
 \cap S = \emptyset$. 
 Let $\Omega_{-t} = \Omega\setminus(\Omega_{+t}
\cup\Gamma_t)$ and set $\Omega_t = \Omega_{+t} \cup \Omega_{-t}$.
We assume that $\Omega_{\pm t}$ be occupied by some incompressible 
viscous fluids of unit mass densities whose viscosity coefficients are 
positive constants $\mu_\pm$. 
Let 
$\bu = {}^\top(u_1, \ldots, u_N)$ and $\fp$ be the velocity field and 
the pressure field  on $\Omega_t$, respectively.   
 We consider the following Navier-Stokes equations with transmission condition
on $\Gamma_t$ and no-slip condition on $S$:
\begin{equation}\label{eq:2phase}\left\{\begin{aligned}
&\pd_t\bu_\pm + \bu\cdot\nabla\bu_\pm - \DV(\mu\bD(\bu_\pm) - \fp_\pm\bI)  = \bff
&\quad&\text{in $\Omega_{\pm t}$}, \\
&\qquad \dv\bu_\pm = 0 &\quad&\text{in $\Omega_{\pm t}$}, \\
&[[\mu\bD(\bu) - \fp\bI]]\bn_t = \sigma H(\Gamma_t)\bn_t,
\quad [[\bu]]=0
&\quad&\text{on $\Gamma_t$}, \\
&\qquad V_{\Gamma_t} = \bu_+\cdot\bn_t &\quad&\text{on $\Gamma_t$},\\
&\qquad \bu_- = 0 & \quad&\text{on $S$}
\end{aligned}\right.\end{equation}
for $t\in\BR$,
where $\bff=\bff(x, t)$ is a prescribed time-periodic external force
with period $2\pi$;  $\mu$ is the viscosity coefficient given by 
$$\mu = \begin{cases} \mu_+ \quad&\text{in $\Omega_{+t}$}, \\
\mu_-\quad&\text{in $\Omega_{-t}$};
\end{cases}
$$
and  $[[f]]$ denotes the jump 
of $f_\pm$ defined on $\Omega_\pm$ along $\bn_t$ defined by setting
$$[[f]](x_0) = \lim_{x\to x_0 \atop x \in \Omega_{+t}} f_+(x) 
- \lim_{x\to x_0 \atop x \in \Omega_{-t}}
f_-(x)\quad\text{for $x_0 \in \Gamma_t$}.
$$ 
The purpose of this paper is also to find $\Omega_{\pm t}$, $\Gamma_t$, $\bu_\pm$ and $\fp_\pm$ 
which satisfy the periodicity condition:
\begin{equation}\label{eq:p2}
\Omega_{\pm t} = \Omega_{\pm t+2\pi}, \enskip
\Gamma_t = \Gamma_{t+2\pi}, \enskip
\bu_{\pm }(x,t) = \bu_{\pm}(x, t+2\pi), \enskip
\fp_{\pm }(x,t) = \fp_{\pm}(x, t+2\pi).
\end{equation}

To state a main result, we introduce the assumptions about $\Omega_t$ as follows.
We assume that $\Omega \supset B_R$ for some $R > 0$, and that 
\begin{gather} 
\int^{2\pi}_0\Bigl(\int_{\Omega_{+t}} x\,dx\Bigr)\,dt = 0,  \label{assump:6}\\
|\Omega_{+t}| = |B_R| \quad\text{for any $t \in (0, 2\pi)$}. \label{assump:7}
\end{gather}
The following theorem is our main result concerning time-periodic solutions of the two-phase problem 
for the Navier-Stokes equations.
\begin{thm}\label{thm:main2} Let $1 < p, q < \infty$ and $2/p + N/q < 1$.  
$\Omega_+ = B_R$ and $\Omega_- = \Omega\setminus(B_R\cup S_R)$.  Then, there exist 
a positive constant $\epsilon$ and a bijective map $x = \Phi(y, t)$ from $\Omega$ onto itself
such that for any $\bff \in L_{p, {\rm per}}((0, 2\pi), L_q(\Omega)^N)$ satisfying the smallness condition:
$\|\bff\|_{L_p((0, 2\pi), L_q(\Omega))} \leq \epsilon$, there exist $\bv_\pm(y, t)$, $\fq_\pm(y, t)$
and $\rho(y, t)$ with
\begin{equation}\label{reg:2} \begin{aligned}
\bv_\pm &\in L_{p, {\rm per}}((0, 2\pi), H^2_q(\Omega_\pm)^N) \cap 
H^1_{p, {\rm per}}((0, 2\pi), L_q(\Omega_\pm)^N), \\
\fq_\pm & \in L_{p, {\rm per}}((0, 2\pi), H^1_q(\Omega_\pm)), \\
\rho & \in L_{p, {\rm per}}((0, 2\pi), W^{3-1/q}_q(S_R)) \cap H^1_{p, {\rm per}}((0, 2\pi),
W^{2-1/q}_q(S_R))
\end{aligned}\end{equation}
such that 
\begin{gather*}
\Omega_{\pm t} = \{x = \Phi(y, t) \mid y \in \Omega_\pm\}, 
\quad
\bu_\pm(x, t) = \bv_\pm(\Phi^{-1}(x, t), t), \quad \fp_\pm(x, t) = \fq_\pm(\Phi^{-1}(x, t), t), 
\end{gather*}
where $y = \Phi^{-1}(x, y)$ is the inverse map
of $x = \Phi(y, t)$, 
are solutions of problem \eqref{eq:2phase},  and $\Gamma_t$ is given by 
$$\Gamma_t = \{x = y + R^{-1}\rho(y, t) + \xi(t) \mid y \in S_R\},$$
where $\xi(t)$ is the barycenter point of $\Omega_+$ defined by setting  
$$\xi(t) = \frac{1}{|\Omega_{+t}|} \int_{\Omega_{+t}} x\,dx.$$
Moreover, $\bv_\pm$ and $\rho$ satisfy the estimate:
\begin{equation}\label{est:int2}\begin{aligned}
\sum_{\pm}&(\|\bv_\pm\|_{L_p((0, 2\pi), H^2_q(\Omega_\pm))} + 
\|\pd_t\bv_\pm\|_{L_p((0, 2\pi), L_q(\Omega_\pm))} ) \\
&+ \|\rho\|_{L_p((0, 2\pi), W^{3-1/q}_q(S_R))}
+ \|\pd_t\rho\|_{L_p((0, 2\pi), W^{2-1/q}_q(S_R))}
+\|\pd_t \rho\|_{L_\infty((0,2\pi),W^{1-1/q}_q(S_R))} 
\leq C\epsilon
\end{aligned}\end{equation}
for some constant $C$ independent of $\epsilon$. 
\end{thm}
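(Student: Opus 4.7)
The plan is to follow the roadmap already advertised in the introduction. A Hanzawa-type transform reduces \eqref{eq:2phase}--\eqref{eq:p2} to a quasilinear parabolic system on the fixed two-phase geometry $(\Omega_+,\Omega_-,S_R)$; the unknowns are then decomposed into a time-average (stationary) part and a mean-zero (oscillatory) part, each part is solved linearly by combining an $\CR$-solver of the resolvent problem with the transference theorem of \cite{EKS1}, and the full nonlinear problem is closed by a contraction argument for small $\bff$. The volume and barycenter constraints \eqref{assump:6}--\eqref{assump:7} together with the choice of $\xi(t)$ are built into the ansatz and are exactly what is needed to by-pass the zero eigenvalue of the linearized stationary operator.

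I would first fix a smooth cut-off $\Psi$ supported in a collar of $S_R$ inside $\Omega$ and set
\[
\Phi(y,t) = y + \Psi(y)\bigl(R^{-1}\rho(y,t)\,\omega(y) + \xi(t)\bigr),\qquad y\in\Omega,
\]
where $\omega$ is a smooth extension to $\Omega$ of the unit outer normal field $y/|y|$ on $S_R$ and $\rho$ is the unknown height function on $S_R$. For $\rho$ small in $W^{3-1/q}_q(S_R)$ uniformly in $t$, which is permitted by $2/p+N/q<1$, the map $\Phi(\cdot,t)$ is a bijection of $\Omega$ onto itself sending $\Omega_\pm$ to $\Omega_{\pm t}$. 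Substituting $\bu_\pm(x,t)=\bv_\pm(\Phi^{-1}(x,t),t)$ and $\fp_\pm=\fq_\pm\circ\Phi^{-1}$ into \eqref{eq:2phase} yields a quasilinear two-phase Stokes system for $(\bv_\pm,\fq_\pm,\rho)$ on the fixed geometry with non-homogeneous transmission conditions on $S_R$ and the no-slip condition on $S$; the kinematic condition $V_{\Gamma_t}=\bu_+\cdot\bn_t$ becomes an evolution equation for $\rho$ on $S_R$ coupled with $\bv_+|_{S_R}$.

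Next, decompose every unknown $w\in\{\bv_\pm,\fq_\pm,\rho\}$ as $w = w_s+w_o$ with $w_s=(2\pi)^{-1}\int_0^{2\pi} w(\cdot,t)\,dt$ and $\int_0^{2\pi} w_o(\cdot,t)\,dt=0$. Averaging the transformed system in time gives a nonlinear stationary two-phase problem whose linearization has a finite-dimensional kernel coming from the conservation of the mass and of the barycenter of $\Omega_+$; the conditions \eqref{assump:6}--\eqref{assump:7} and the matching orthogonality of the averaged data against this kernel make it possible to add a finite-rank correction to the stationary linear operator that restores invertibility and is annihilated on the admissible data class. For the oscillatory part the constraint $\int_0^{2\pi} w_o\,dt=0$ eliminates the zero Fourier mode, so the relevant resolvent parameters $i\lambda$ run over $\lambda\in\BZ\setminus\{0\}$, bounded away from the spectrum of the linearized two-phase Stokes operator. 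The maximal $L_{p,\mathrm{per}}$--$L_q$ estimate for the oscillatory linearization is then obtained by taking the $\CR$-solver from \cite{S3,SS1}, regarding it as an operator-valued symbol $\mathcal{M}(\lambda)$ on $\lambda\in\BZ\setminus\{0\}$, and invoking \cite{EKS1} to convert the $\CR$-boundedness of $\{\mathcal{M}(\lambda),\lambda\partial_\lambda\mathcal{M}(\lambda)\}$ into an $L_p(\BR/2\pi\BZ)$-bound for the associated Fourier multiplier on the mean-zero subspace; this yields precisely the regularity class \eqref{reg:2} together with the quantitative estimate driving \eqref{est:int2}.

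Finally, the nonlinear system is solved by a contraction mapping in the ball of radius $C\epsilon$ in the product space on the left-hand side of \eqref{est:int2}. The assumption $2/p+N/q<1$ embeds this space into a class of Lipschitz-in-space, continuous-in-time fields, so $\Phi(\cdot,t)$ stays a diffeomorphism of $\Omega$ for all $t$ and every term that is quadratic or higher in $(\bv_\pm,\rho)$ carries a genuine factor of the small parameter. The main obstacle, and the real novelty, is the oscillatory maximal regularity step: one must verify the $\CR$-boundedness of the $\lambda$-derivatives of the resolvent solution operator for the linearized two-phase Stokes system uniformly down to $|\lambda|=1$ in a functional setting that accommodates the non-homogeneous transmission conditions on $S_R$ and the $\rho$-coupling, and then check that the lower-order perturbations produced by freezing the quasilinear coefficients can be absorbed into the left-hand side; any loss in this step would immediately break the fixed-point scheme.
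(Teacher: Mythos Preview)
Your overall strategy matches the paper's: Hanzawa transform to a fixed two-phase geometry, maximal $L_p$--$L_q$ regularity for the linearized periodic problem via $\CR$-solvers and the transference theorem of \cite{EKS1}, then a contraction mapping for small data. There are, however, two points where your outline diverges from what the paper actually does, and the first is a genuine gap.

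First, you identify the main obstacle as verifying $\CR$-boundedness of the two-phase resolvent solution operator \emph{uniformly down to $|\lambda|=1$}. The paper does not do this, and it is not clear that it can be done directly. The $\CR$-solver from \cite{SS1} is only available on $\BR_{k_0}=\{|\lambda|\geq k_0\}$ for some large $k_0$ (Theorem~\ref{thm:rs.21}); no uniform $\CR$-bound down to $|\lambda|=1$ is claimed or proved. Instead, the paper treats the finitely many remaining Fourier modes $1\leq|k|\leq k_0+3$ one at a time: for each fixed $k$ the resolvent problem \eqref{4.22} is solved by a Fredholm argument (Theorem~\ref{thm:rs.23}), the existence input being the already-known solvability at $k=k_0+4$ and Rellich compactness, and uniqueness being checked by an $L_2$ energy identity on the reduced (pressure-eliminated) problem \eqref{4.29}. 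Since only finitely many $k$ are involved, ordinary norm bounds suffice here and no $\CR$-boundedness is needed in this range. Your proposal as written asks for something strictly harder than what the argument actually requires.

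Second, your description of the stationary part is more elaborate than the two-phase case needs. Unlike the one-phase problem, the no-slip condition $\bv_-=0$ on the fixed outer boundary $S$ already forces $\bD(\bv)=0\Rightarrow\bv=0$, so there is no rigid-motion kernel in the velocity and hence no analogue of the operator $\CL$ or of the orthogonality hypothesis \eqref{assump:4} is required (note that Theorem~\ref{thm:main2} imposes only smallness on $\bff$, in contrast with Theorem~\ref{thm:main1}). The only finite-rank correction is $\CM\rho$, handling the kernel of $\CB_R$, and it is inserted into the \emph{full} kinematic equation via the volume and barycenter identities \eqref{eigen:1}--\eqref{eigen:2}, not just into its time average. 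Accordingly, the paper handles the two-phase mode $k=0$ in the same theorem (Theorem~\ref{thm:rs.23}) as the low nonzero modes, with no separate stationary analysis.
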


{\bf Method} Since the domain $\Omega_t$ is unknown, using the Hanzawa
transform, we reduce the equations onto a fixed domain, 
which results in a system of quasilinear equations.  Thus, we cannot use the analytic 
$C_0$-semi-group approach.  Our main tool is to use the $L_p$-$L_q$ maximal
regularity for periodic solutions to the linearized equations, which 
can be obtained by using the $\CR$-solver to the generalized resolvent
problem and applying the transference theorem (\cite{L, EKS1})  to the solution formula
represented by the $\CR$-solver. 
This is a quite new and more direct approach and a completely
different idea than exploiting the Poincar\'e operator. 

{\bf Further notation}~ This section is ended by explaining further notation
used in this paper. We denote the sets of all complex numbers, real numbers,
integers, and natural numbers by $\BC$, $\BR$, $\BZ$, and $\BN$, respectively.
Let $\BN_0 = \BN \cup \{0\}$. Let $X$ be a Banach space with norm $\|\cdot\|_X$.  
For any $X$-valued function $f:\BR\to X$ the functions $\CF[f]$ and $\CF^{-1}[f]$ denote
the Fourier transform and the inverse Fourier transform of $f$, respectively, defined by 
setting
$$\CF[f](\tau) = \frac{1}{2\pi}\int_{\BR} e^{-i\tau t}f(t)\,dt, \quad 
\CF^{-1}[f](t) = \int_{\BR} e^{it\tau} f(\tau)\,d\tau.
$$
Let $g:\BT\to X$ be an $X$-valued function defined on the torus $\BT = \BR/ 2\pi\BZ$.
We define the Fourier transform $\CF_\BT$ acting on $g$ by setting
$$\CF_\BT[g](k) = \frac{1}{2\pi}\int^{2\pi}_0 e^{-ikt}g(t)\,dt,$$
which is regarded as a correspondence $g \mapsto (\CF_\BT[g](k))
= \{\CF_\BT[g](k) \in X \mid k \in \BZ\}$.  For any  sequence $(a_k)
= \{a_k \in X \mid k \in \BZ\}$, we define the inverse Fourier transform 
$\CF^{-1}_\BT$ acting on $(a_k)$ by setting
$$\CF^{-1}_\BT[(a_k)](t) = \sum_{k \in \BZ} e^{ikt}a_k.
$$
For any $X$-valued periodic function $f$ with period $2\pi$, we set
$$f_S = \frac{1}{2\pi}\int^{2\pi}_0f(t)\,dt, \quad f_\perp = f - f_S.$$
The $f_S$ and $f_\perp$ are called stationary part and 
oscillatory part of $f$, respectively. 

For $1 \leq p \leq \infty$,  $L_p(\BR, X)$ and 
$H^1_p(\BR, X)$ denote the standard  Lebesgue and Sobolev spaces of
$X$-valued functions defined on $\BR$, 
and $\|\cdot\|_{L_p(\BR, X)}$,  
$\|\cdot\|_{H^1_p(\BR, X)}$ denote their respective norms. 
For $\theta \in (0, 1)$, $H^\theta_{p, {\rm per}}((0,2\pi), X)$ denotes the 
$X$-valued Bessel potential space of periodic functions defined by 
\begin{align*}
H^\theta_{p, {\rm per}}((0, 2\pi), X) &= \{f \in L_{p, {\rm per}}((0, 2\pi), X) \mid 
\|f\|_{H^\theta_p((0, 2\pi), X)} < \infty\}, \\
\|f\|_{H^\theta_p((0, 2\pi), X)} &= \Bigl(\int^{2\pi}_0
\|\CF_\BT^{-1}[(1+k^2)^{\theta/2}\CF_\BT[f](k)](t)\|_X^p\,dt
\Bigr)^{1/p}.
\end{align*}
As usual, we set $L_{p, {\rm per}}((0, 2\pi), X)=H^0_{p, {\rm per}}((0, 2\pi), X)$.

For any multi-index $\alpha = (\alpha_1, \ldots, \alpha_N) 
\in \BN_0^N$ we set 
$\partial_x^\alpha h = 
\partial_1^{\alpha_1}\cdots \partial_N^{\alpha_N} h$
with $\partial_i = \partial/\partial x_i$.  
For any scalar function $f$, we write
\begin{align*}
&\nabla f= (\pd_1f, \ldots, \pd_Nf), \quad 
\bar\nabla f=(f, \pd_1f, \ldots, \pd_Nf),\\
&\nabla^nf = (\pd_x^\alpha f \mid |\alpha|=n), \quad 
\bar\nabla^n f = (\pd_x^\alpha f \mid |\alpha| \leq n)
\quad(n \geq 2),
\end{align*}
where $\pd_x^0f = f$. For any $m$-vector of functions $\bff
={}^\top(f_1, \ldots, f_m)$, we write 
\begin{align*}
&\nabla \bff= (\nabla f_1, \ldots, \nabla f_m), \quad 
\bar\nabla \bff=(\bar\nabla f_1, \ldots, \bar\nabla f_m),\\
&\nabla^n\bff = (\nabla^n f_1, \ldots, \nabla^nf_m), \quad 
\bar\nabla^n\bff = (\bar\nabla^nf_1,\ldots, \bar\nabla^nf_m). 
\end{align*}
For any $N$-vector of functions,
$\bu={}^\top(u_1, \ldots, u_N)$, sometimes $\nabla\bu$ is regarded as
an $(N\times N)$-matrix of functions whose $(i, j)^{\rm th}$ component is
$\pd_ju_i$.
For any $m$-vector $V=(v_1, \ldots, v_m)$ and $n$-vector
$W=(w_1, \ldots, w_n)$, $V\otimes W$ denotes an $(m\times n)$ matrix 
whose $(i, j)^{\rm th}$ component is $V_iW_j$.  For any 
$(mn\times N)$-matrix $A=(A_{ij, k} \mid i=1, \ldots, m, j=1, \ldots, n,
k=1, \ldots, N)$, 
$AV\otimes W$ denotes an $N$-column vector whose $k^{\rm th}$ component is
the quantity: $\sum_{j=1}^m\sum_{j=1}^n A_{ij, k}v_iw_j$.

Let $\ba\cdot \bb =<\ba, \bb>= \sum_{j=1}^Na_jb_j$
for any $N$-vectors $\ba=(a_1, \ldots, a_N)$  and $\bb
=(b_1, \ldots, b_N)$.  
For any $N$-vector $\ba$, let $\Pi_0\ba = \ba_\tau
: = \ba - <\ba, \bn>\bn$.  For any two $(N\times N)$-matrices 
$\bA=(A_{ij})$ and $\bB=(B_{ij})$, the quantity $\bA:\bB$ is defined by
$\bA:\bB 
= \sum_{i,j=1}^NA_{ij}B_{ji}$. 
For any domain $G$ with boundary $\pd G$, 
we set
$$(\bu, \bv)_G = \int_G\bu(x)\cdot\overline{\bv(x)}\,dx, 
\quad (\bu, \bv)_{\pd G} = \int_{\pd G}\bu\cdot\overline{\bv(x)}\,d\sigma,
$$
where $\overline{\bv(x)}$ is the complex conjugate of $\bv(x)$ and 
$d\sigma$ denotes the surface element of $\pd G$. 
Given  
$1 < q < \infty$, let $q' = q/(q-1)$. 
For $L > 0$, let $B_L = \{x \in \BR^N \mid |x| < L\}$ and
$S_L = \{x \in \BR^N \mid |x| = L \}$.

For two Banach spaces $X$ and $Y$, $X+Y = \{x + y \mid x \in X, y\in Y\}$,
$\CL(X, Y)$ denotes the 
set of all bounded linear operators from $X$ into $Y$ and 
$\CL(X, X)$ is written simply as $\CL(X)$. 
Moreover, let 
$\CR_{\CL(X, Y)}(\{\CT(\lambda) \mid \lambda \in I\})$
be the $\CR$-bound of the operator family 
$\{\CT(\lambda) \mid \lambda \in I\}\subset \CL(X, Y)$ (see also Definition \ref{def:4.1}).
Let 
\begin{alignat*}2
i\BR &= \{i\lambda \in \BC \mid \lambda \in \BR\}, 
&\quad 
i\BR_{\lambda_0} &= \{i\lambda \in i\BR \mid |\lambda| \geq \lambda_0\}.
\end{alignat*} 

The letter $C$ denotes a
generic constant and $C_{a,b,c,\ldots}$ denotes that the 
constant $C_{a,b,c,\ldots}$ depends 
on $a$, $b$, $c, \ldots$;
the value of 
$C$ and $C_{a,b,c,\ldots}$ may change from line to line.

\section{Linearization principle}\label{sec:2}
We now formulate the problems \eqref{eq:1p} and \eqref{eq:2phase} in a fixed domain and 
state main results in this setting. 
Theorems \ref{thm:main1} and \ref{thm:main2}  follow from the main theorems of this section.  
\subsection{One-phase problem}
Let $\Omega_t$, $\bu$ and $\fp$ satisfies equations \eqref{eq:1p} and the periodicity condition
\eqref{eq:p1}. We have
\begin{align*}
&((\mu\bD(\bu)-\fp\bI)\bn_t, \be_i)_{\Gamma_t} 
=\sigma(\Delta_{\Gamma_t}x, \be_i)_{\Gamma_t}
= -\sigma(\nabla_{\Gamma_t}x, \nabla_{\Gamma_t}\be_i)_{\Gamma_t}=0;
\\
&((\mu\bD(\bu)-\fp\bI)\bn_t, x_i\be_j-x_j\be_i)_{\Gamma_t} 
=\sigma(\Delta_{\Gamma_t}x, x_i\be_j-x_j\be_i)_{\Gamma_t} \\
&\quad =-\sigma(\nabla_{\Gamma_t}x_j, \nabla_{\Gamma_t}x_i)_{\Gamma_t}
+\sigma (\nabla_{\Gamma_t}x_i, \nabla_{\Gamma_t}x_j)_{\Gamma_t}
=0.
\end{align*}
Multiplying the first equation in \eqref{eq:1p} with $\bp_\ell$ and 
integrating the resultant formula on $\Omega_t$ and using the divergence
theorem of Gauss give that 
$$\frac{d}{dt}(\bu, \bp_\ell)_{\Omega_t} = (\bff, \bp_\ell)_{\Omega_t}.
$$
In fact, we have used the fact that
$$\frac{d}{dt}\int_{\Omega_t}\bu(x, t)\cdot\bp_\ell(x)\,dx
= \int_{\Omega_t}<\pd_t\bu + \bu\cdot\nabla\bu, \bp_\ell>\,dx, 
$$
which follows from the Reynolds transport theorem%
\footnote{For any $f(x, t)$ defined on $\Omega_t$, we have
$$\frac{d}{dt}\int_{\Omega_t}f(x, t)\,dx = \int_{\Omega_t}
(\pd_tf + \dv(f\bu))\,dx,$$
which is called the Reynolds transport theorem.}
 and that $\dv \bu=0$ in
$\Omega_t$. Thus, the periodicity condition \eqref{eq:p1} yields that
\begin{equation}\label{eq:2.1}
\int^{2\pi}_0\Bigl(\int_D \bff(x, \cdot)\cdot\bp_\ell(x)\,dx\Bigr)\,dt = 0
\quad\text{for $\ell=1, \ldots, M$}, 
\end{equation}
where we have used the assumption that ${\rm supp}\, \bff(\cdot, t) 
\subset D$ for any $t \in \BR$. Thus, the condition \eqref{assump:4} is 
a necessary one to prove Theorem \ref{thm:main1}. From this observation,
instead of problem \eqref{eq:p1}, we consider the following 
equations:
\begin{equation}\label{1main:eq}\left\{\begin{aligned}
&\pd_t\bu + \bu\cdot\nabla\bu - \DV(\mu\bD(\bu) - \fp\bI)  
 +\sum_{k=1}^M\int^{2\pi}_0(\bu(\cdot, t), \bp_k)_{\Omega_t}\,
dt \,\bp_k= \bff
&\quad&\text{in $\Omega_t$}, \\
&\qquad \dv \bu  = 0 
&\quad&\text{in $\Omega_t$}, \\
&(\mu\bD(\bu)-\fp\bI)\bn_t  = \sigma H(\Gamma_t)\bn_t 
&\quad&\text{on $\Gamma_t$}, \\
&\qquad V_{\Gamma_t} = \bu\cdot\bn_t
&\quad&\text{on $\Gamma_t$}
\end{aligned} \right.\end{equation}
for $t \in \BR$.  In fact, if $\Omega_t$, $\bu$ and $\fp$ 
satisfy equations \eqref{1main:eq}, then we have
$$\frac{d}{dt}(\bu(\cdot, t),\bp_\ell)_{\Omega_t} + 
\sum_{k=1}^M\int^{2\pi}_0(\bu(\cdot, t), \bp_k)_{\Omega_t}\,dt
(\bp_k, \bp_\ell)_{\Omega_t}
=(\bff, \bp_\ell)_{\Omega_t},
$$
which, combined with the periodicity condition \eqref{eq:p1}, the assumption \eqref{assump:1} and
\eqref{eq:2.1}, leads to 
$$
\int^{2\pi}_0(\bu(\cdot, t), \bp_k)_{\Omega_t}\,dt= 0
\quad\text{for $k=1, \ldots, M$}.
$$
Thus, $\Omega_t$, $\bu$ and $\fp$ satisfy the first equation in
\eqref{eq:1p}. 
Therefore, under the stated assumptions, a solution to problem \eqref{1main:eq} 
is a solution to the original problem \eqref{eq:1p}.
However, as we shall see below, the condition \eqref{eq:2.1} is not necessary to find a solution 
to \eqref{1main:eq}.

From now on, we consider problem \eqref{1main:eq}.  We reduce problem
\eqref{1main:eq} to some nonlinear equations on $B_R$ by using the 
Hanzawa transform, which we explain below.
Let $\xi(t)$ be the barycenter point of $\Omega_t$ defined by setting
\begin{equation}\label{bary:1}
\xi(t)= \frac{1}{|B_R|}\int_{\Omega_t} x\,dx,
\end{equation}
where we have used the fact that $|\Omega_t| = |B_R|$, which follows from 
the assumption \eqref{assump:5}. By the Reynolds transport theorem, we see
that
\begin{equation}\label{bary:2}
\frac{d}{dt} \xi(t) = \frac{1}{|B_R|} \int_{\Omega_t} (\pd_tx + \bu\cdot\nabla x)\,dx
= \frac{1}{|B_R|} \int_{\Omega_t} \bu(x, t)\,dx
\end{equation}
because $\dv\bu=0$. Let $\rho(y, t)$ be an unknown time-periodic function with
period $2\pi$ such that 
$$\Gamma_t = \{x = y + \rho(y, t)\bn + \xi(t) \mid y \in S_R\},
$$
where $S_R = \{x \in \BR^N \mid |x| = R\}$ and $\bn$ is the unit outer normal
to $S_R$, that is $\bn = x/|x|$ for $x \in S_R$.  Let $H_\rho$ be a suitable 
extension of $\rho$ to $\BR^N$, and then by the $K$-method
in the theory of real interpolation \cite{Lu, Tri},  we see that there exist
constants $C_1$ and $C_2$ such that 
\begin{gather}\label{ext:1}
C_1\|H_\rho(\cdot, t)\|_{H^k_q(\BR^N)} \leq \|\rho(\cdot, t)\|_{W^{k-1/q}_q(S_R)}
\leq C_2\|H_\rho(\cdot, t)\|_{H^k_q(\BR^N)} 
\quad\text{for $k=1, 2, 3$},  \nonumber\\
C_1\|\pd_t H_\rho(\cdot, t)\|_{H^k_q(\BR^N)} \leq \|\pd_t\rho(\cdot, t)\|_{W^{k-1/q}_q(S_R)}
\leq C_2\|\pd_tH_\rho(\cdot, t)\|_{H^k_q(\BR^N)} 
\quad\text{for $k=1, 2$}, 
\label{ext:1} 
\end{gather}
for any $t \in(0, 2\pi)$.  In the following, we fix the method of this extension.  For example, 
$\hat H_\rho$ is the unique solution of the Dirichlet problem:
$$(1-\Delta)\hat H_\rho = 0 \quad\text{in $\BR^N\setminus S_R$}, \quad \hat H_\rho|_{S_R} = \rho.
$$
Let $\varphi$ be a $C^\infty(\BR^N)$ function which equals one for $x \in B_{2R}$ and
zero for $x \not\in B_{3R}$, and we set $H_\rho = \varphi \hat H_\rho$. 
We assume that 
\begin{equation}\label{small:1}
\sup_{t \in \BR} \|\nabla H_\rho(\cdot, t)\|_{H^1_\infty(\BR^N)} \leq \delta
\end{equation}
with some small constant $\delta > 0$.  Notice that 
$y/|y| = R^{-1}y$ for $y \in S_R$  is the unit outer normal to $S_R$.
Let $\Phi(y, t) = y + R^{-1}H_\rho(y, t)y+ \xi(t)$.  
We choose $\delta > 0$ so small that 
the map $x = \Phi(y,t)$ is injective.  In fact, for any $y_1$ and $y_2$ 
$$|\Phi(y_1, t) - \Phi(y_2, t)| \geq |y_1-y_2| - \sup_{t \in \BR}\|\nabla H_\rho(\cdot, t)\|_{H^1_\infty(\BR^N)}
|y_1-y_2| \geq (1-\delta)|y_1-y_2|,
$$
which leads to the injectivity of the transformation $x = \Phi(y, t)$ for any $t \in \BR$ provided
that $0 < \delta < 1$.
Moreover, using the inverse mapping theorem, 
we see that the map $x=\Phi(y, t)$ is surjective from $\BR^N$ onto $\BR^N$.

Let 
\begin{equation}\label{domain:1}\begin{aligned}
\Omega_t & = \{x = y+ R^{-1}H_\rho(y, t)y + \xi(t) \mid y \in B_R\}, \\
\Gamma_t & = \{x = y + R^{-1}\rho(y, t)y + \xi(t) \mid y \in S_R\}.
\end{aligned}\end{equation}
Let $\bu(x, t)$ and $\fp(x, t)$ satisfy equations \eqref{eq:1p}, and let 
$\bv(y, t) = \bu(x, t)$ and $\fq(y, t) = \fp(x, t)$.  
We  derive an equation for $\bv$ and $\rho$ from the kinematic condition: 
$V_{\Gamma_t} = \bu\cdot\bn_t$ on
$\Gamma_t$. From the definition: 
$$V_{\Gamma_t} = \frac{\pd x}{\pd t}\cdot\bn_t = (\frac{\pd \rho}{\pd t} \bn + \xi'(t))\cdot\bn_t.
$$
To represent $\xi'(t)$, we introduce the Jacobian $J(t)$ of the transformation 
$x = \Phi(y, t)$, which is written as
$J(t) = 1 + J_0(t)$ with
$$J_0(t) = \det\bigl(\delta_{ij} +R^{-1} \frac{\pd}{\pd y_i}(H_\rho(y, t)y_j)\bigr)_{i,j=1, \ldots, N} - 1.$$
Choosing $\delta > 0$ small enough in \eqref{small:1}, we have
\begin{equation}\label{jacob:1}\begin{aligned}
|J_0(t)|& \leq C\|\nabla H_\rho(\cdot, t)\|_{L_\infty(B_R)}.
\end{aligned}\end{equation}
From \eqref{bary:2} it follows that
\begin{equation}\label{bary:3}
\xi'(t) = \frac{1}{|B_R|}\int_{B_R} \bv(y, t)\,dy + \frac{1}{|B_R|}\int_{B_R} \bv(y, t)J_0(t)\,dy,
\end{equation}
and so noting that $\bn\cdot\bn=1$, we have the kinematic equation: 
\begin{equation}\label{kin:1}
\pd_t\rho - (\bv - \frac{1}{|B_R|}\int_{B_R}\bv(y, t)\,dy)\cdot\bn = d(\bv, \rho)
\end{equation}
with
\begin{equation}\label{kin:1*}
d(\bv, \rho) = \frac{1}{|B_R|}\int_{B_R}\bv(y, t)J_0(t)\,dy \cdot(\bn-\bn_t)+ 
\frac{\pd \rho}{\pd t}\bn\cdot(\bn-\bn_t) + \bv\cdot(\bn_t-\bn).
\end{equation}
As will be seen in Sect. \ref{sec:3},  we have $<H(\Gamma_t)\bn_t, \bn_t>
= (\Delta_{S_R} + (N-1)/R^2)\rho -(N-1)/R +$  nonlinear terms,  and
$-(N-1)/R^2$ is the first eigen-value of the Laplace-Beltrami operator
$\Delta_{S_R}$ on $S_R$ with eigen-functions $y_j/R$ for $y=(y_1,
\ldots, y_N) \in S_R$. We need to derive some auxiliary equations
to avoid the zero and first eigen-values of $\Delta_{S_R}$. 
From the assumption \eqref{assump:5} and the representation formulas 
of $\Omega_t$ and $\Gamma_t$ in \eqref{domain:1}, by using polar
coordinates we have
\begin{align*}
|B_R| &= |\Omega_t|=\int_{S_R}\Bigl(\int^{1+R^{-1}\rho(\omega, t)}_0 r^{N-1}\,dr\Bigr)\,d\omega
= \frac{1}{N}\int_{S_R} (1+R^{-1}\rho(\omega,t))^N\,d\omega  \\
&= |B_R| + R^{-1}\int_{S_R} \rho \,d\omega + \sum_{k=2}^N\frac{{}_NC_k}{N}
R^{-k}\int_{S_R} \rho^k\,d\omega, 
\end{align*}
and so we have
\begin{equation}\label{eigen:1}
\int_{S_R}\rho\,d\omega + \sum_{k=2}^N\frac{{}_NC_k}{N}R^{1-k}\int_{S_R}\rho^k\,d\omega=0
\end{equation}
where $d\omega$ denotes the surface element of $S_R$.  Moreover, from \eqref{bary:1}
and the assumption \eqref{assump:5}, 
using polar coordinates centered at $\xi(t)$, we have
\begin{align*}
0 &= \frac{1}{|B_R|}\int_{\Omega_t}(x-\xi(t))\,dx = \frac{1}{|B_R|}
\int_{S_R}\Bigl(\int^{1+R^{-1}\rho(\omega, t)}_0 r^N\omega\,dr\Bigr)\,d\omega \\
&= \frac{1}{|B_R|}\frac{1}{N+1}\int_{S_R}(1+R^{-1}\rho(\omega, t))^{N+1}\omega\,d\omega
 = \frac{1}{|B_R|}\Bigl(R^{-1}\int_{S_R} \rho\omega\,d\omega 
+ \sum_{k=2}^{N+1}\frac{{}_{N+1}C_k}{N+1}R^{-k}\int_{S_R}\rho^k
\omega\,d\omega\Bigr),
\end{align*}
from which it follows that
\begin{equation}\label{eigen:2}
\int_{S_R} \rho\omega_j\,d\omega + 
\sum_{k=2}^{N+1}\frac{{}_{N+1}C_k}{N+1}R^{1-k}\int_{S_R} \rho^k\omega_j\,d\omega
=0
\end{equation}
for $j=1, \ldots, N$.  Thus, under the assumption \eqref{assump:5}
and the representation of $\Gamma_t$ and $\Omega_t$ in \eqref{domain:1}, 
the kinematic condition \eqref{kin:1} is equivalent to the equation
\begin{equation}\label{kin:2}
\pd_t\rho + \int_{S_R} \rho\,d\omega
+ \sum_{k=1}^N \Bigl(\int_{S_R}\rho \omega_k\,d\omega \Bigr)y_k
-\Bigl(\bv - \frac{1}{|B_R|}\int_{B_R}\bv\,dy\Bigr)\cdot\bn
= \tilde d(\bv, \rho)
\quad\text{on $S_R\times(0, 2\pi)$}
\end{equation}
with
\begin{equation}\label{kin:3}\tilde d(\bv, \rho) = d(\bv, \rho) 
- \sum_{k=2}^N\frac{{}_NC_k}{N}R^{1-k}\int_{S_R}\rho^k\,d\omega 
- \sum_{k=2}^{N+1}\frac{{}_{N+1}C_k}{N+1}R^{1-k}\Bigl(\int_{S_R}\rho^k
\omega\,d\omega\Bigr)y_k.
\end{equation}
Therefore, to prove the existence of $(\Omega_t, \bu, \fp)$, we shall 
prove the well-posedness of the following equations:
\begin{equation}\label{main:eq1}\left\{\begin{aligned}
&\pd_t\bv + \CL\bv_S- \DV(\mu(\bD(\bv) - \fq\bI) = \bG + \bF(\bv, \rho)
&\quad&\text{in $B_R\times(0, 2\pi)$}, \\
&\dv \bv = g(\bv, \rho) = \dv \bg(\bv, \rho)
&\quad&\text{in $B_R\times(0, 2\pi)$}, \\
&\pd_t\rho + \CM\rho 
-\CA\bv\cdot\bn
= \tilde d(\bv, \rho) 
&\quad&\text{on $S_R\times(0, 2\pi)$}, \\
&(\mu\bD(\bv)-\fq)\bn - (\CB_R\rho) \bn
= \bh(\bv, \rho)
&\quad&\text{on $S_R\times(0, 2\pi)$},
\end{aligned}\right.\end{equation}
where we have set 
\allowdisplaybreaks{
\begin{equation}\label{op:1}\begin{aligned}
\CL\bv_S &= \sum_{k=1}^M(\bv_S, \bp_k)_{B_R}\,\bp_k; \quad 
\CA\bv  = \bv - \frac{1}{|B_R|}\int_{B_R}\bv\,dy; \\
\CM\rho & = \int_{S_R} \rho\,d\omega
+ \sum_{k=1}^N \Bigl(\int_{S_R}\rho \omega_k\,d\omega \Bigr)y_k; \\
\CB_R\rho &= (\Delta_{S_R}+\frac{N-1}{R^2})\rho
=R^{-2}(\Delta_{S_1} +(N-1))\rho,
\end{aligned}\end{equation}}%
where $\Delta_{S_1}$ is the Laplace-Beltrami operator on the unit 
sphere $S_1$.  For the functions on the right side of equations \eqref{main:eq1},
$\bG(y, t)$ and $\bF(\bv, \rho)$ are given in \eqref{right:1} in Sect. \ref{sec:3}
below,  
$g(\bv, \rho)$ and $\bg(\bv, \rho)$ given in \eqref{form:g} in
Sect. \ref{sec:3} below, 
$\tilde d(\bv, \rho)$ has been given in \eqref{kin:3} and 
$\bh(\bv, \rho) = (\bh'(\bv, \rho), h_N(\bv, \rho))$ is given 
in \eqref{58} and \eqref{non:g.5} in Sect. \ref{sec:3} below. 

The following theorem is the unique existence theorem of 
$2\pi$-periodic solutions of problem \eqref{main:eq1}.
\begin{thm}\label{main:thm1} Let $1 < p, q < \infty$ and $2/p+N/q < 1$.
Then, there exists a small constant $\epsilon > 0$ such that if 
$\bff$ satisfies the assumption \eqref{assump:4} and the smallness
condition: $\|\bff\|_{L_p((0, 2\pi), L_q(D))} \leq \epsilon$, then 
problem \eqref{main:eq1} admits $2\pi$-periodic
solutions $\bv$, $\fq$,  and $\rho$ satisfying the regularity condition
\eqref{reg:1} and the estimate \eqref{est:int1} in Theorem \ref{thm:main1}.
\end{thm}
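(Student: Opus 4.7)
The strategy is linearization plus contraction, and my first move is to split every periodic unknown $u$ into its stationary mean $u_S$ and oscillatory part $u_\perp$. Under this splitting the linear version of \eqref{main:eq1} decouples cleanly: the zero eigenvalues that plague the free-boundary Stokes linearization only appear in the stationary subsystem, so the oscillatory subsystem is effectively a non-degenerate parabolic problem for which genuine $L_p$-$L_q$ maximal regularity can be established.

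For the oscillatory part I would expand in Fourier series in $t$ and, for each $k\in\BZ\setminus\{0\}$, solve the generalized resolvent problem associated to \eqref{main:eq1} at spectral parameter $\lambda=ik$. Combining the $\CR$-bounded solution operator $\CS(\lambda)$ for the free-surface Stokes resolvent from \cite{S1,S2,S3,S4,SS1} with the transference theorem of \cite{EKS1}, the $\CR$-boundedness of $\{(ik)^j\CS(ik)\}_{k\neq 0}$ for $j=0,1$ converts into $L_p(\BT;L_q)$-boundedness of the associated operator-valued Fourier multiplier, yielding the desired maximal regularity estimate for $(\bv_\perp,\fq_\perp,\rho_\perp)$. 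For the stationary subsystem the equations are elliptic, and the auxiliary terms $\CL\bv_S$ and $\CM\rho_S$ play the key role: the Stokes-type operator has the $M$-dimensional rigid-motion kernel $\mathrm{span}\{\bp_\ell\}_{\ell=1}^M$, while $\CB_R=\Delta_{S_R}+(N-1)/R^2$ has kernel $\mathrm{span}\{1,\omega_1,\ldots,\omega_N\}$, and $\CL$ and $\CM$ are precisely the $L_2$-projections onto these kernels. Together with the orthogonality assumption \eqref{assump:4} and the geometric identities \eqref{eigen:1}-\eqref{eigen:2} hidden inside $\tilde d$, they ensure that the modified stationary operator is invertible and that its solution in fact solves the original system.

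With the linear theory in hand, I would define a map $\Psi$ on the ball $X_\epsilon$ of radius $C\epsilon$ in the product space specified by \eqref{reg:1}, sending $(\bv,\fq,\rho)$ to the solution of the linear periodic problem with right-hand sides $\bff+\bF(\bv,\rho),\,g(\bv,\rho),\,\bg(\bv,\rho),\,\tilde d(\bv,\rho),\,\bh(\bv,\rho)$, and show $\Psi$ is a self-map and a contraction for $\epsilon$ small. The underlying product estimates rest on the embedding $H^1_p((0,2\pi);L_q)\cap L_p((0,2\pi);H^2_q)\hookrightarrow C([0,2\pi];B^{2(1-1/p)}_{q,p})$ combined with $2/p+N/q<1$, which makes $B^{2(1-1/p)}_{q,p}$ a Banach algebra continuously embedded in $C^1$. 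The main obstacle will be the boundary nonlinearity $\bh(\bv,\rho)$: the mean-curvature contribution $\sigma H(\Gamma_t)\bn_t$ must be decomposed so that its linear-in-$\rho$ part matches $\CB_R\rho$ exactly and the remainder is genuinely of order $\|(\bv,\rho)\|_{X_\epsilon}^2$ in the correct anisotropic norm; this, together with verifying the analogous divergence structure $g=\dv\bg$, will require the delicate Hanzawa-transform computations announced around \eqref{58} in Section \ref{sec:3}.
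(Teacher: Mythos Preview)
Your plan is essentially the paper's own, but there is one substantive gap and one minor misattribution worth flagging.

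The gap concerns the oscillatory part. The $\CR$-solver you invoke from \cite{S1,S2,S3,S4} is only constructed for $|\lambda|\geq\lambda_0$ with some $\lambda_0>0$; it does \emph{not} automatically furnish an $\CR$-bounded family $\{\CS(ik)\}_{k\in\BZ\setminus\{0\}}$ down to $|k|=1$. The paper deals with this by a three-way split of the Fourier spectrum: a smooth cutoff isolates the high modes $|k|\geq k_0+4$, where Weis' theorem plus the transference theorem of \cite{EKS1} apply verbatim to $\varphi(ik)\CS(ik)$; the finitely many intermediate modes $1\leq|k|\leq k_0+3$ are then handled one at a time by writing the resolvent problem at $ik$ as a compact perturbation of that at $i(k_0+4)$ and invoking the Fredholm alternative, the uniqueness input being an $L_2$ energy computation that exploits $\CM$ and the spectral gap of $\CB_R$ on the orthogonal complement of $\{1,\omega_1,\dots,\omega_N\}$. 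You should build this step in; that the small nonzero integers avoid the spectrum is not free and is exactly where the operators $\CM$ and $\CA$ earn their keep for $k\neq 0$.

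The misattribution is the role of \eqref{assump:4}. The invertibility of the modified stationary linear operator (your ``elliptic'' subsystem) uses only that $\CL$ and $\CM$ remove the rigid-motion and low-spherical-harmonic kernels; no compatibility on $\bff$ is needed there, and indeed the paper's fixed-point argument in Section~\ref{sec:5} never appeals to \eqref{assump:4}. That hypothesis is used only afterwards, in passing from the solution of \eqref{main:eq1} back to the original system \eqref{eq:1p}, to make the inserted $\CL$-term vanish. Relatedly, the iteration should run over $(\bv,\rho)$ only; $\fq$ is not a free variable but is produced by the linear solver at each step.
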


{\bf Proof of Theorem \ref{thm:main1}} ~ We prove Theorem \ref{thm:main1} with the help
of Theorem \ref{main:thm1}. 
Let $\xi(t)$ be defined by 
$$\xi(t) = \int^t_0\xi'(s)\,ds + c$$
where $c$ is chosen in such a way that
\begin{equation}\label{compati:1}
\int^{2\pi}_0 \xi(s)\,ds = 0.
\end{equation}
Here, $\xi'(t)$ is given by the formula in \eqref{bary:3}.  Then, 
 we define $\Omega_t$ and $\Gamma_t$ by the formulas
in \eqref{domain:1}.  Let   $\Phi(y, t)=y+ R^{-1}H_\rho y+ \xi(t)$. 
By choosing $\epsilon$ sufficiently small, estimates \eqref{est:int1}
and \eqref{ext:1} ensure that the condition \eqref{small:1} is satisfied
with small $\delta > 0$. This yields the existence of the inverse map $y= \Phi^{-1}(x, t)$ of the 
map: $x = \Phi(y, t)$. Thus, the velocity field $\bu(x, t)$ and the pressure $\fp(x, t)$ 
on $\Omega_t$ are well-defined
by setting $\bu(x, t) = \bv(y, t)$ and $\fp(x, t) = \fq(y,t)$.  Since $\dv \bu=0$ in $\Omega_t$, 
$|\Omega_t|$ is a constant, and so $|\Omega_t| = |B_R|$ by assumption \eqref{assump:5}.  
Moreover, if we set
$$\eta(t) = \frac{1}{|B_R|}\int_{\Omega_t} x\,dx,$$
then 
$$\eta'(t) = \frac{1}{|B_R|}\int_{\Omega_t} \bu(x, t)\,dx = \xi'(t),
$$
and so $\eta(t) = \xi(t) + d$ with some constant $d$.  We assume that the assumption 
\eqref{assump:3} holds, and then by \eqref{compati:1} we have 
$$0 = \int^{2\pi}_0 \eta(t)\,dt = 2\pi d + \int^{2\pi}_0\xi(t)\,dt = 2\pi d,
$$
which leads to $d=0$, that is 
$$\xi(t) = \frac{1}{|B_R|} \int_{\Omega_t} x\,dx.
$$
Combining this with \eqref{assump:5} gives that
$$\int_{S_R}(R+\rho)^N\,d\omega = 0, \quad \int_{S_R}(R+\rho)^{N+1}\,d\omega=0,
$$
which yields that $\rho$ satisfies the equation:
$$\pd_t\rho - \CA\bv\cdot\bn = d(\bv, \rho) \quad\text{on $S_R$}.
$$
Therefore, the kinematic equation: $V_{\Gamma_t} = \bu\cdot\bn_t$ holds on $\Gamma_t$. 
So far, we see that $\Omega_t$, $\bu$ and $\fp$ satisfy equations \eqref{1main:eq}. 
Since $D \subset B_R$, there exists a constant $\epsilon_0 > 0$ for which
$D \subset B_{R-3\epsilon_0}$.  Since $\Omega_t$ is a small perturbation of
$B_R$, choosing $\epsilon > 0$ smaller if necessary, we may assume that
$B_{R-\epsilon_0} \subset \Omega_t$, and so by \eqref{assump:4} we have
\begin{equation}\label{assump:4*}
\int^{2\pi}_0(\bff(\cdot, t), \bp_\ell)_{\Omega_t}\,dt = 0 \quad\text{for $\ell=1, \ldots, N$}.
\end{equation}
Multiplying the first equation in \eqref{1main:eq} with $\bp_\ell$,
integrating the resultant formulas with respect to $x$  on $\Omega_t$ and
with respect to $t$ on $(0, 2\pi)$,   and 
using the periodicity \eqref{eq:p1} and \eqref{assump:4*}
we have
\begin{equation}\label{assump:4**}
\sum_{k=1}^M \int^{2\pi}_0(\bu(\cdot, t), \bp_k)_{\Omega_t}\,dt\int^{2\pi}_0(\bp_k, \bp_\ell)_{\Omega_t}
= \int^{2\pi}_0(\bff(\cdot, t),  \bp_\ell)_{\Omega_t}\,dt =0
\end{equation}
for $\ell=1, \ldots, M$.   Since $\Omega_t$ is
a small perturbation of $B_R$, we may assume that the assumption \eqref{assump:1}
holds, and so by \eqref{assump:4**} we have
$$\int^{2\pi}_0(\bu(\cdot, t), \bp_\ell)_{\Omega_t}\,dt = 0
\quad\text{for $\ell=1, \ldots, M$}. $$
Therefore, $\Omega_t$, $\bu$ and $\fp$ satisfy  equations \eqref{eq:1p},
and so we see that
Theorem \ref{thm:main1} follows immediately from 
Theorem \ref{main:thm1}. 


\subsection{Two-phase problem}

We now formulate problem \eqref{eq:2phase} in the fixed domain. 
The idea is essentially the same as in the one-phase case. 
Let $\dot\Omega = \Omega\setminus S_R$, $\Omega_+=B_R$ and $\Omega_-=\Omega\setminus\overline{B_R}$. 
We define the barycenter point, $\xi(t)$,
of $\Omega_{+t}$  by setting
\begin{equation}\label{bary:21}
\xi(t)= \frac{1}{|B_R|}\int_{\Omega_{+t}} x\,dx,
\end{equation}
where we have used the fact that $|\Omega_{+t}| = |B_R|$, which follows from 
the assumption \eqref{assump:7}. By the Reynolds transport theorem, we see
that
\begin{equation}\label{bary:22}
\frac{d}{dt} \xi(t) 
= \frac{1}{|B_R|} \int_{\Omega_t} \bu(x, t)\,dx.
\end{equation}
 Let $\rho(y, t)$ be an unknown periodic function with
period $2\pi$ such that 
$$\Gamma_t = \{x = y + \rho(y, t)\bn + \xi(t) \mid y \in S_R\},
$$
where $S_R = \{x \in \BR^N \mid |x| = R\}$ and $\bn$ is the unit outer normal
to $S_R$, that is $\bn = y/|y|$ for $y \in S_R$.  

In the following, we fix the method how to extend this to a transformation 
from $\dot\Omega$ to $\Omega_t$.  
Let $H$ be 
 a unique solution of the Dirichlet problem:
$$(1-\Delta) H_\rho = 0 \quad\text{in $\BR^N\setminus S_R$}, \quad H_\rho|_{S_R} = \rho.
$$
Let $L$ be a large number for which $\Omega \subset B_L$. 
From the $K$-method in real interpolation theory \cite{Lu, Tri}, we see that 
\begin{gather}
C_1\|H_\rho(\cdot, t)\|_{H^k_q(\BR^N)} \leq \|\rho(\cdot, t)\|_{W^{k-1/q}_q(S_R)}
\leq C_2\|H_\rho(\cdot, t)\|_{H^k_q(\BR^N)} 
\quad\text{for $k=1, 2, 3$},  \nonumber\\
C_1\|\pd_t H_\rho(\cdot, t)\|_{H^k_q(\BR^N)} \leq \|\pd_t\rho(\cdot, t)\|_{W^{k-1/q}_q(S_R)}
\leq C_2\|\pd_tH_\rho(\cdot, t)\|_{H^k_q(\BR^N)} 
\quad\text{for $k=1, 2$}, \label{ext:21}
\end{gather}
for any $t \in(0, 2\pi)$.  
We may assume that there exists
a small number $\omega > 0$ for which $B_{R+3\omega} \subset \Omega$. 
Let $\varphi$ be a function in $C^\infty(\BR^N)$ for which equals one
for $x \in B_{R+\omega}$ and zero for $x\not\in B_{R+2\omega}$. 
Let $\Phi(y, t) = y+ \varphi(y)(R^{-1}H_\rho(y, t)y+ \xi(t))$.  Notice that  
 $\Phi(y, t) = y + R^{-1}H_\rho(y, t)y + \xi(t)$ for $y \in B_R$. 
Setting $\Psi(y, t) = \varphi(y)(R^{-1}H_\rho(y, t)y+ \xi(t))$, we assume that 
\begin{equation}\label{small:21}
\sup_{t \in \BR} \|\Psi(\cdot, t)\|_{H^1_\infty(\BR^N)} \leq \delta
\end{equation}
with some small constant $\delta > 0$.  We choose $\delta > 0$ so small that 
the map: $y \mapsto x= \Phi(y, t)$ is bijective from $\Omega$ onto itself. 
In fact, 
for any $y_1$ and $y_2$ 
$$|\Phi(y_1, t) - \Phi(y_2, t)| \geq |y_1-y_2| - \sup_{t\in \BR}\|\nabla \Psi(\cdot, t)\|_{H^1_\infty(\BR^N)}
|y_1-y_2| \geq (1-\delta)|y_1-y_2|,
$$
which leads to the injectivity of the map: $x = \Phi(y, t)$ for any $t \in \BR$ provided
that $0 < \delta < 1$.  Moreover,  using the fact that $x = \Phi(y, t) = y$ 
for $y \in \Omega\setminus B_{R+2\omega}$, 
and the inverse mapping theorem, 
we  see that the map $x=\Phi(y, t)$ is surjective from $\Omega$ onto itself. 
Let 
\begin{equation}\label{domain:2}\begin{aligned}
\Omega_{+t} & = \{x = \Phi(y, t) = y+ R^{-1}H_\rho(y, t)y + \xi(t) \mid y \in B_R\}, \\
\Omega_{-t} & = \{x = \Phi(y, t) = y + \varphi(y)(R^{-1}H_\rho(y, t)y + \xi(t))
\mid y \in \Omega\setminus (S_R\cup B_R)\}, \\
\Gamma_t & = \{x = y + R^{-1}\rho(y, t)y + \xi(t) \mid y \in S_R\},
\end{aligned}\end{equation}
Notice that  $R^{-1}y$ is the unit outer normal to $S_R$ for  $y \in S_R$. 
In the following, 
the jump quantity of $f$ defined on $\Omega\setminus S_R$ is also denoted by
$[[f]]$, which is defined by setting
$$[[f]](x_0,t) = \lim_{y\to x_0 \atop y \in \Omega_+} f(y, t) 
- \lim_{y\to x_0 \atop y \in \Omega_-}f(y,t)\quad\text{for $x_0 \in S_R$}, 
$$
where we have set $\Omega_+ = B_R$ and $\Omega_- = \Omega\setminus(B_R \cup S_R)$. 
Let $\dot\Omega = \Omega_+ \cup \Omega_-$, and  for $f$ defined on $\dot\Omega$,
we write $f_\pm = f|_{\Omega_\pm}$.  On the other hand,  for $f_\pm$ defined on 
$\Omega_\pm$, we define $f$ by 
$f|_{\Omega_\pm} = f_\pm$.

Let $\bu(x, t)$ and $\fp(x, t)$ satisfy the equations \eqref{eq:2phase}, and let 
$\Phi^{-1}(x, t)$ be the inverse map of $x = \Phi(y, t)$.  Let 
$\bv_\pm(y, t) = \bu_\pm(\Phi^{-1}(y, t), t)$ and $\fq_\pm(y, t) = \fp_\pm(\Phi^{-1}(y, t), t)$
for $y \in \Omega_{\pm t}$.  
We  derive an equation  for $\bv_+$ and $\rho$ from the kinematic condition
 $V_{\Gamma_t} = \bu\cdot\bn_t$ on
$\Gamma_t$. Noting that $[[\bu]] = 0$ on $\Gamma_t$, we may also assume that
$[[\bv]]=0$ on $S_R$, and so $\bv_+ = \bv_-$ on $S_R$.

From the definition it follows that 
$$V_{\Gamma_t} = \frac{\pd x}{\pd t}\cdot\bn_t = (\frac{\pd \rho}{\pd t} \bn + \xi'(t))\cdot\bn_t,
$$
Here and in the following, the unit outer normal to $S_R$ is denoted by 
$\bn$, which is given by $\bn(y) = R^{-1}y$ for $y \in S_R$. 
To represent the time derivative of $\xi(t)$ given in \eqref{bary:21}, 
we introduce the Jacobian $J_+(t)$ of the transformation: 
$x = y + R^{-1}H_\rho y + \xi(t)$ for $y \in B_R$, which is written as
$J_+(t) = 1 + J_{0,+}(t)$ with
$$J_{0,+}(t) = \det\bigl(\delta_{ij} +R^{-1} \frac{\pd}{\pd y_i}(H_\rho(y, t)y_j)\bigr)_{i,j=1, \ldots, N} - 1
\quad\text{for $y \in B_R$}. $$
Choosing $\delta > 0$ small enough in \eqref{small:21}, we have
\begin{equation}\label{jacob:21}\begin{aligned}
\|J_{0,+}(t)\|_{L_\infty(B_R)} & \leq C\|\nabla H_\rho(\cdot, t)\|_{L_\infty(B_R)}.
\end{aligned}\end{equation} 
From \eqref{bary:21} it follows that
\begin{equation}\label{bary:23}
\xi'(t) = \frac{1}{|B_R|}\int_{B_R} \bv_+(y, t)\,dy + \frac{1}{|B_R|}\int_{B_R} \bv_+(y, t)J_{0,+}(t)\,dy,
\end{equation}
and noting that $\bn\cdot\bn=1$, on $S_R$  we have the kinematic equation: 
\begin{equation}\label{kin:21}
\pd_t\rho - (\bv - \frac{1}{|B_R|}\int_{B_R}\bv_+(y, t)\,dy)\cdot\bn = d(\bv_+, \rho)
\end{equation}
with
$$d(\bv_+, \rho) = \frac{1}{|B_R|}\int_{B_R}\bv_+(y, t)J_{0,+}(t)\,dy \cdot(\bn-\bn_t) + 
\frac{\pd \rho}{\pd t}\bn\cdot(\bn-\bn_t) + \bv_+\cdot(\bn_t-\bn).
$$
As was already discussed in Subsec.~2.1,  from  the assumption \eqref{assump:7} and the representation formulas 
of $\Omega_{+t}$ and $\Gamma_t$ in \eqref{domain:2},  we have \eqref{eigen:1} in Subsec.~2.1, too. 
 Moreover, from \eqref{bary:21}
and the assumption \eqref{assump:7},  we have \eqref{eigen:2} in Subsec.~2.1, 
too.  Thus, under the assumption \eqref{assump:7}
and the representation of $\Gamma_t$ and $\Omega_{+t}$ in \eqref{domain:2}, 
the kinematic condition is equivalent to the equation:
\begin{equation}\label{kin:2.22}
\pd_t\rho + \int_{S_R} \rho\,d\omega
+ \sum_{k=1}^N \Bigl(\int_{S_R}\rho \omega_k\,d\omega \Bigr)y_k
-\Bigl(\bv_+ - \frac{1}{|B_R|}\int_{B_R}\bv_+\,dy\Bigr)\cdot\bn
= \tilde d(\bv_+, \rho)
\quad\text{on $S_R\times(0, 2\pi)$}
\end{equation}
with
\begin{equation}\label{kin:3.22}\tilde d(\bv_+, \rho) = d(\bv_+, \rho) 
- \sum_{k=2}^N\frac{{}_NC_k}{N}\int_{S_R}R^{1-k}\rho^k\,d\omega 
- \sum_{k=2}^{N+1}\frac{{}_{N+1}C_k}{N+1}R^{1-k}\Bigl(\int_{S_R}\rho^k
\omega\,d\omega\Bigr)y_k.
\end{equation}
And then, to prove Theorem \ref{thm:main2}, we shall 
prove the global well-posedness of the following equations:
\begin{equation}\label{main:eq2}\left\{\begin{aligned}
&\pd_t\bv_\pm - \DV(\mu_\pm(\bD(\bv_\pm) - \fq_\pm) = \bG_\pm + \bF_\pm(\bv, \rho)
&\quad&\text{in $\Omega_\pm\times(0, 2\pi)$}, \\
&\dv \bv_\pm = g_\pm(\bv, \rho) = \dv \bg_\pm(\bv, \rho)
&\quad&\text{in $\Omega_\pm \times(0, 2\pi)$}, \\
&\pd_t\rho + \CM\rho 
-\CA\bv_+\cdot\bn
= \tilde d(\bv_+, \rho)
&\quad&\text{on $S_R\times(0, 2\pi)$}, \\
&[[\mu_\pm\bD(\bv_\pm)-\fq_\pm]]\bn - (\CB_R\rho) \bn
= \tilde\bh(\bv, \rho)
&\quad&\text{on $S_R\times(0, 2\pi)$}, \\
&[[\bv]]= 0 &\quad&\text{on $S_R\times(0, 2\pi)$}, \\
&\bv_- = 0 &\quad&\text{on $S\times(0, 2\pi)$}, 
\end{aligned}\right.\end{equation}
where we have set 
\allowdisplaybreaks{
\begin{equation}\label{op:2}
\CA\bv_+  = \bv_+ - \frac{1}{|B_R|}\int_{B_R}\bv_+\,dy
\end{equation}
and 
$\CM\rho$ and $\CB_R\rho$ are the same as in \eqref{op:1} in 
Subsec.~2.1.  For the functions on the right side of equations \eqref{main:eq2},
$\bG_\pm$ and $\bF_\pm(\bv, \rho)$ are defined in \eqref{right:1*} 
of Sect.~\ref{sec:3} below, $g_\pm(\bv, \rho)$ and $\bg_\pm(\bv, \rho)$
are defined in \eqref{form:g*} of Sect.~\ref{sec:3} below, 
and $\tilde\bh(\bv, \rho)$ is defined in \eqref{3.58} 
of Sect.~\ref{sec:3} below. 

The following theorem is the unique existence theorem of 
$2\pi$-periodic solutions of problem \eqref{main:eq2}.
\begin{thm}\label{main:thm2} Let $1 < p, q < \infty$ and $2/p+N/q < 1$.
Then, there exists a small constant $\epsilon > 0$ such that 
for any $\bff \in L_{p, {\rm per}}((0, 2\pi), L_q(\Omega)^N)$
satisfying the smallness condition: $\|\bff\|_{L_p((0, 2\pi), L_q(\Omega))}
\leq \epsilon$, problem \eqref{main:eq2} admits solutions 
$\bv_\pm$, $\fq_\pm$, and $\rho$ satisfying the regularity condition
\eqref{reg:2} and the estimate \eqref{est:int2} in Theorem \ref{thm:main2}.
\end{thm}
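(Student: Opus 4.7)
The approach is a Banach fixed-point argument in a small ball of the function space $X_\epsilon$ defined by the regularity class \eqref{reg:2} together with the estimate \eqref{est:int2}. Given $(\bv, \fq, \rho) \in X_\epsilon$, substitute these into the right-hand sides $\bF_\pm, g_\pm, \bg_\pm, \tilde d, \tilde\bh$ of \eqref{main:eq2} to produce data for the corresponding linear two-phase problem, and let $\Psi(\bv,\fq,\rho)$ denote its unique solution. The goal is to show that for $\epsilon$ sufficiently small, $\Psi$ maps the ball of radius $C\epsilon$ in $X_\epsilon$ into itself and is a strict contraction. Following the novelty emphasized in the introduction, I split every function $f$ as $f = f_S + f_\perp$ into its stationary and oscillatory parts, and solve the linear problem componentwise; this splitting is what allows one to bypass the zero eigenvalue of the linearization, which affects only the $f_S$ component.

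\textbf{Stationary part.} Averaging the linearized version of \eqref{main:eq2} in $t$ gives an elliptic transmission problem with no time derivative. The curvature operator $\CB_R = \Delta_{S_R} + (N-1)/R^2$ carries a nontrivial kernel (the first spherical harmonics $y_j/R$ corresponding to the eigenvalue $-(N-1)/R^2$ of $\Delta_{S_R}$, together with the constant mode from eigenvalue $0$), but the auxiliary operator $\CM$ in \eqref{op:1} is tailored to project precisely onto these modes, so the modified elliptic system is Fredholm of index zero and, thanks to the necessary conditions \eqref{eigen:1}--\eqref{eigen:2} enforced at the linear level, it is uniquely solvable. Evaluating the $\CR$-solvers from \cite{S1,S2,S3,S4,SS1} at $\lambda=0$ delivers $(\bv_S, \fq_S, \rho_S)$ and the elliptic $W^2_q$/$W^{3-1/q}_q$ estimate.

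\textbf{Oscillatory part.} For the zero-mean component, Fourier expand $f_\perp = \sum_{k\ne 0} e^{ikt}\CF_\BT[f](k)$. Each mode $k\ne 0$ solves the resolvent transmission problem with spectral parameter $\lambda = ik \in i\BR\setminus\{0\}$, and the $\CR$-solver theory of \cite{S1,S2,S3,S4,SS1} supplies a solution operator whose $\CR$-bound is uniform on $i\BR_{\lambda_0}$ for any $\lambda_0>0$. Feeding this into the transference theorem of \cite{EKS1} converts the uniform operator-valued multiplier bound into the maximal $L_p$-$L_q$ regularity estimate for $(\bv_\perp,\fq_\perp,\rho_\perp)$ of the same shape as \eqref{est:int2}. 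The exclusion of the mode $k=0$ is automatic because $f_\perp$ has zero temporal mean, which is precisely how the zero-eigenvalue obstacle is sidestepped in this part.

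\textbf{Nonlinear closure and main obstacle.} Each of $\bF_\pm, g_\pm, \bg_\pm, \tilde d, \tilde\bh$ is, by inspection of the formulas in Section \ref{sec:3}, at least quadratic in $(\bv,\fq,\nabla\bv,\rho,\nabla\rho,\nabla^2\rho)$. Under $2/p + N/q < 1$ the embedding of the maximal regularity class into bounded continuous functions holds, so product estimates in Sobolev and Besov spaces bound every nonlinearity by $C\|(\bv,\fq,\rho)\|_{X_\epsilon}^2$, with analogous Lipschitz estimates for differences of two iterates. Combining this with the linear bounds from the two previous paragraphs yields $\|\Psi(\bv,\fq,\rho)\|_{X_\epsilon} \leq C(\|\bff\|_{L_p((0,2\pi),L_q(\Omega))} + \epsilon^2)$ and a contraction inequality of the same form, so for $\epsilon$ small enough the Banach fixed-point theorem produces the unique $2\pi$-periodic solution with the asserted regularity \eqref{reg:2} and estimate \eqref{est:int2}. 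The main obstacle is the periodic maximal $L_p$-$L_q$ regularity for the two-phase linearization: one must verify that the $\CR$-bounds delivered by \cite{S1,S2,S3,S4,SS1} for this specific transmission problem with Dirichlet outer boundary are uniform on $i\BR\setminus\{0\}$, and that the solution formula handles the compatibility on $\dv\bv$ and on the jump $[[\bv]]$ in a manner compatible with the hypotheses of the transference theorem of \cite{EKS1}; once this linear building block is in place, the nonlinear closure is routine.
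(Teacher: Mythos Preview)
Your overall architecture is the paper's: Banach fixed point, stationary/oscillatory split, $\CR$-solvers plus transference for the oscillatory part, and quadratic estimates for the nonlinearities under $2/p+N/q<1$. The nonlinear closure sketch is fine.

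The genuine gap is in the linear building block. You assert that the $\CR$-solver theory supplies a solution operator whose $\CR$-bound is uniform on $i\BR_{\lambda_0}$ \emph{for any} $\lambda_0>0$, and that the stationary problem is handled by ``evaluating the $\CR$-solvers at $\lambda=0$''. Neither holds. The $\CR$-solver for the two-phase transmission problem (Theorem~\ref{thm:rs.21}) is only constructed on $i\BR_{k_0}$ for some \emph{large} $k_0>0$ coming from a perturbation argument; it does not extend down to small nonzero frequencies, let alone to $\lambda=0$. Consequently the transference theorem of \cite{EKS1} only yields the maximal regularity estimate for the high-frequency piece $f_\psi=\CF_\BT^{-1}[\psi\CF_\BT f]$ where $\psi$ is a cutoff supported in $\{|k|\geq k_0+3\}$.

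What is actually required, and what the paper carries out (Theorem~\ref{thm:rs.23}), is a separate treatment of the finitely many integer modes $0\leq |k|\leq k_0+3$: one first solves at $k=k_0+4$ using the $\CR$-solver, reduces the general $k$ to a homogeneous problem with right-hand side in $J_q(\dot\Omega)$, reformulates it via the reduced pressure $K(\bv,\rho)$ solving a weak Neumann problem with the transmission condition~\eqref{4.210*}, and then shows unique solvability by a Fredholm argument (Rellich compactness gives compactness of the resolvent; uniqueness is proved in the $L_2$ framework using the energy identity and the orthogonality relations~\eqref{null:21} induced by $\CM$). Note that in the two-phase case the operator $\CL$ from the one-phase problem is absent from \eqref{main:eq2}: rigid motions are killed not by $\CL$ but by the Dirichlet condition $\bv_-=0$ on $S$, which is what makes $\bD(\bv)=0$ imply $\bv=0$ in the $L_2$ uniqueness step. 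Your proposal does not account for this low-frequency analysis, and without it the periodic maximal $L_p$--$L_q$ estimate for the linearization, hence the whole fixed-point scheme, is not established.
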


Employing the same argument as in the proof of Theorem \ref{thm:main1}
in  Subsec.~2.1, we see that Theorem \ref{thm:main2} immediately 
follows from Theorem \ref{main:thm2}.


\section{Derivation of nonlinear terms}\label{sec:3}
\subsection{One-phase problem case} 
First, we consider the one-phase problem case 
and  we consider the map
\begin{equation}\label{map:1} x = y + \Psi(y, t),
\end{equation}
where
$\Psi(y, t)= R^{-1}H_\rho(y,t) y + \xi(t)$ and $H_\rho$ satisfies the condition
\eqref{ext:1} and 
\eqref{small:1}. Recall that $H_\rho(y, t) = \rho(y, t)$ for $y \in S_R$. 
Let $\Omega_t$, $\Gamma_t$, $\bu(x, t)$ and $\fp(x, t)$ satisfy the equations
\eqref{eq:1p} and 
$$\Omega_t = \{x = y + \Psi(y, t) \mid y \in B_R\}, \quad
\Gamma_t = \{x = y + R^{-1}\rho(y, t)y + \xi(t) \mid y \in S_R\}.
$$
Choose $\delta > 0$ small in such a way that there exists an inverse
map: $y = \Phi^{-1}(x, t)$ of the map: $x = \Phi(y,t)=y+\Psi(y, t)$. 
Let $\bv(y, t) = \bu(\Phi^{-1}(y, t), t)$ and $\fq(y, t) = 
\fp(\Phi^{-1}(y, t), t)$.  
By the chain rule, we have 
\begin{equation}\label{change:1}
\nabla_x = (\bI + \bV_0(\bk))\nabla_y, 
\quad \frac{\pd}{\pd x_i} = \frac{\pd}{\pd y_i} + \sum_{j=1}^N V_{0ij}(\bk)\frac{\pd}{\pd y_j}
\end{equation}
where $\nabla_z = {}^\top(\pd/\pd z_1, \ldots, \pd/\pd z_N)$ for $z \in \{x, y\}$
and $\bk =(k_0, k_1, \ldots, k_N) =  (H_\rho, \nabla H_\rho)$.  Here, 
$\bV_0(\bk)$ is an $(N\times N)$-matrix of $C^\infty$ functions defined for $|\bk| \leq \delta$
with $\bV_0(0) = 0$ and $V_{0ij}(\bk)$ is the $(i, j)^{\rm th}$ component of 
$\bV_0(\bk)$. 
By \eqref{change:1}, we can write $\bD(\bu)$ as  
$\bD(\bu) = \bD(\bv) + \CD_\bD(\bk)\nabla\bv$ with 
\begin{equation} \label{div:1}\begin{split}
\bD(\bv)_{ij} &= \frac{\pd v_i}{\pd y_j} + \frac{\pd v_j}{\pd y_i},
\\
(\CD_\bD(\bk)\nabla\bv)_{ij} &= \sum_{k=1}^N
\Bigl(V_{0jk}(\bk)\frac{\pd v_i}{\pd y_k}
+ V_{0ik}(\bk)\frac{\pd v_j}{\pd y_k}\Bigr).
\end{split}\end{equation}
We next consider $\dv\bv$.  By \eqref{change:1}, we have
\begin{equation}\label{div:1-1}\dv_x\bu = \sum_{j=1}^N\frac{\pd u_j}{\pd x_j}
= \sum_{j,k=1}^N(\delta_{jk} + V_{0jk}(\bk))\frac{\pd v_j}{\pd y_k}
= \dv_y\bv + \bV_0(\bk):\nabla \bv.
\end{equation}
Let $J$ be the Jacobian of the transformation \eqref{map:1}.  
Choosing $\delta > 0$ small enough, we may assume 
that $J = J(\bk) = 1 + J_0(\bk)$, 
where $J_0(\bk)$ is a $C^\infty$ function defined for $|\bk| < \sigma$ such 
that $J_0(0) = 0$.  

To obtain another representation formula of $\dv_x\bu$,
we use the inner product $(\cdot, \cdot)_{\Omega_t}$.  For any 
test function $\varphi \in C^\infty_0(\Omega_t)$, 
we set $\psi(y) = \varphi(x)$. We then have 
\begin{align*}
&(\dv_x\bu, \varphi)_{\Omega_t} = -(\bu, \nabla\varphi)_{\Omega_t}
= -(J\bv, (\bI + \bV_0)\nabla_y\psi)_\Omega \\
&= (\dv((\bI + {}^\top\bV_0)J\bv), \psi)_\Omega
= (J^{-1}\dv((\bI + {}^\top\bV_0)J\bv), \varphi)_{\Omega_t},
\end{align*}
which, combined with \eqref{div:1-1}, leads to 
\begin{equation}\label{div:2}
\dv_x\bu= \dv_y\bv + \bV_0(\bk):\nabla\bv
= J^{-1}(\dv_y\bv + \dv_y(J{}^\top\bV_0(\bk)\bv)).
\end{equation}
Recalling that $J = J(\bk) = 1 + J_0(\bk)$, we define $g(\bv, \rho)$ and
$\bg(\bv, \rho)$ by letting  
\begin{equation}\label{form:g}\begin{split}
g(\bv, \rho) &= -(J_0(\bk)\dv\bv + (1+J_0(\bk))\bV_0(\bk):\nabla\bv), \\
\bg(\bv, \rho) &= -(1+J_0(\bk)){}^\top\bV_0(\bk)\bv,
\end{split}\end{equation}
and then by \eqref{div:2} we see that 
the divergence free condition: $\dv\bu=0$ is transformed to
 the second equation in the equations \eqref{main:eq1}.  
In particular, it follows from \eqref{div:2} that 
\begin{equation}\label{div:3}
J_0(k)\dv\bv + J(k)\bV_0(\bk):\nabla\bv
= \dv(J(k){}^\top\bV_0(\bk)\bv).
\end{equation}

To derive  $\bF(\bv, \rho)$, 
we first observe that
\begin{align}
&\sum_{j=1}^N\frac{\pd}{\pd x_j}(\mu\bD(\bu)_{ij} - \fp \delta_{ij})
\nonumber \\
&= \sum_{j,k=1}^N\mu(\delta_{jk} + V_{0jk})\frac{\pd}{\pd y_k}
(\bD(\bv)_{ij} + (\CD_\bD(\bk)\nabla\bv)_{ij})
-\sum_{j=1}^N(\delta_{ij} + V_{0ij})\frac{\pd \fq}{\pd y_j},
\label{change:5}
\end{align}
where we have used \eqref{div:1}.  Since 
$$\frac{\pd}{\pd t}[u_i(y + \Psi(y, t), t)]
= \frac{\pd u_i}{\pd t}(x, t) + 
\sum_{j=1}^N\frac{\pd\Psi_{j}}{\pd t}
\frac{\pd u_i}{\pd x_j}(x, t),
$$
we have 
$$\frac{\pd u_i}{\pd t} = \frac{\pd v_i}{\pd t}-\sum_{j,k=1}^N
\frac{\pd \Psi_{j}}{\pd t}
(\delta_{jk} + V_{0jk})\frac{\pd v_i}{\pd y_k},
$$
and therefore, 
\begin{equation}\label{change:3}
\frac{\pd u_i}{\pd t} + \sum_{j=1}^N u_j\frac{\pd u_i}{\pd x_j}
= \frac{\pd v_i}{\pd t}
 + \sum_{j,k=1}^N(v_j - \frac{\pd\Psi_{j}}{\pd t})
(\delta_{jk} + V_{0jk}(\bk))\frac{\pd v_i}{\pd y_k}.
\end{equation}
Putting \eqref{change:5} and \eqref{change:3} together gives 
\begin{align*}
f_i(x, t) & = \Bigl(\frac{\pd v_i}{\pd t} 
+ \sum_{j,k=1}^N(v_j - \frac{\pd\Psi_{j}}{\pd t})
(\delta_{jk} + V_{0jk}(\bk))\frac{\pd v_i}{\pd y_k}\Bigr) \\
&\quad - \mu\sum_{j,k=1}^N(\delta_{jk} + V_{0jk}(\bk))
\frac{\pd}{\pd y_k}(\bD(\bv)_{ij} + (\CD_\bD(\bk)\nabla\bv)_{ij}) \\
&\quad - \sum_{j=1}^N(\delta_{ij}+ V_{0ij}(\bk))\frac{\pd\fq}{\pd y_j}.
\end{align*}
Since $(\bI + \nabla\Psi)(\bI + \bV_0) = (\pd x/\pd y)(\pd y/\pd x) = \bI$, that is,
\begin{equation}\label{change:3*}
\sum_{i=1}^N(\delta_{mi} + \pd_m\Psi_{i})
(\delta_{ij} + V_{0ij}(\bk)) = \delta_{mj},
\end{equation}
we have
\begin{align*}
&\sum_{i=1}^N(\delta_{mi} + \pd_m\Psi_i)f_i(\Psi(y, t), t)\\
& \quad=\sum_{i=1}^N(\delta_{mi} + \pd_m\Psi_{i})
\Bigl(\frac{\pd v_i}{\pd t} 
+ \sum_{j,k=1}^N(v_j - \frac{\pd\Psi_{i}}{\pd t})
(\delta_{jk}+V_{0jk}(\bk))\frac{\pd v_i}{\pd y_k}\Bigr)
\\
&\quad-\mu\sum_{i,j,k=1}^N(\delta_{mi} + \pd_m\Psi_{i})
(\delta_{jk}+V_{0jk}(\bk))\frac{\pd}{\pd y_k}
(\bD(\bv)_{ij} +(\CD_\bD(\bk)\nabla\bv)_{ij})
-\frac{\pd \fq}{\pd y_m}.
\end{align*}
Thus, changing $i$ to $\ell$ and $m$ to $i$ in the formula above, 
we define an $N$-vector
of functions $\bF_1(\bv, \rho)$ by letting 
\begin{align}
&\bF_1(\bv, \rho)|_i  = -\sum_{j,k=1}^N(v_j 
- \frac{\pd\Psi_{j}}{\pd t})
(\delta_{jk} + V_{0jk}(\bk))\frac{\pd v_i}{\pd y_k} \nonumber\\
&\quad-\sum_{\ell=1}^N\pd_i\Psi_{\ell}
\Bigl(\frac{\pd v_\ell}{\pd t} 
+ \sum_{j,k=1}^N(v_j - \frac{\pd\Psi_{j}}{\pd t})
(\delta_{jk} + V_{0jk}(\bk))\frac{\pd v_\ell}{\pd y_k}\Bigr) \nonumber \\
&\quad + \mu\Bigl(\sum_{j=1}^N \frac{\pd}{\pd y_j}(\CD_\bD(\bk)\nabla\bv)_{ij}
+ \sum_{j,k=1}^NV_{0jk}(\bk)\frac{\pd}{\pd y_k}(\bD(\bv)_{ij}
+ (\CD_\bD(\bk)\nabla\bv)_{ij})  \nonumber \\ 
&\quad + \sum_{j,k,\ell=1}^N\pd_i\Psi_{\ell}(\delta_{jk} + V_{0jk}(\bk))
\frac{\pd}{\pd y_k}(\bD(\bv)_{\ell j} + (\CD_\bD(\bk)\nabla\bv)_{\ell j})
\Bigr), \label{form:f}
\end{align}
where $\bF_1(\bu, \rho)|_i$ denotes the $i^{\rm th}$ component of 
$\bF_1(\bu, \rho)$.  

Moreover, 
\begin{align*}
&(\bI + \nabla\Psi)\sum_{k=1}^M\int^{2\pi}_0(\bu(\cdot, t), \bp_k(\cdot))_{\Omega_t}\,dt\, \bp_k(x)\\
&\quad = (\bI + \nabla\Psi)\sum_{k=1}^M\int^{2\pi}_0\int_{B_R}(\bv(y, t)\cdot
\bp_k(y + \Psi(y, t))(1 + J_0(t))\,dydt\,\bp_k(y+\Psi(y, t))\\
& = \CL\bv_S + \bF_2(\bv, \rho)
\end{align*}
with
\begin{equation}\label{form:f*}\begin{aligned}
\bF_2(\bv, \rho) & = \sum_{k=1}^M\Bigl\{\int^{2\pi}_0\int_{B_R}(\bv(y, t)\cdot(\bp_k(y)J_0(t)
+ \tilde\bp_k(\Psi(y, t))(1+J_0(t))\,dydt\bp_k(y)\\
& + \int^{2\pi}_0\int_{B_R}\bv(y, t)\cdot\bp_k(y + \Psi(y, t))(1 + J_0(t))\,dydt
\,\tilde\bp_k(\Psi(y, t)) \\
& + \nabla\Psi\int^{2\pi}_0\int_{B_R}\bv(y, t)\cdot\bp_k(y+\Psi(y, t))(1 + J_0(t))\,dydt
\,\bp_k(y + \Psi(y, t)),
\end{aligned}\end{equation}
where we have set
$$\tilde\bp_k(\Psi(y, t)) = \begin{cases} 0 & \quad\text{for $k=1, \ldots, N$}, \\
c_{ij}(\Psi_i(y, t)\be_j - \Psi_j(y, t)\be_i) & \quad \text{for $k = N+1, \ldots, M$}.
\end{cases}
$$
Thus, setting 
\begin{equation}\label{right:1}
\bG(y, t) = (\bI + \nabla\Psi(y, t))\bff(y+\Psi(y, t), t), \quad
\bF(\bv, \rho) = \bF_1(\bv, \rho) + \bF_2(\bv, \rho),
\end{equation}
we have the first equation in equations \eqref{main:eq1}. 

We next consider the transformation of the boundary conditions.
Recall that $\Gamma_t$ is represented by 
$x = y + \rho(y, t)\bn(y) + \xi(t)$ for $y \in S_R$ with $\bn(y)=y/|y|$. 
Let $x_0$ be any point on $S_R$ and let $\Phi(p)$ be a $C^\infty$ diffeomorphism
on $\BR^N$ such that---up to a rotation---it holds 
$$B_R \cap B_\omega(x_0) =\Phi( \{p \in \BR^N \mid 0 < p_N < \omega, 
\quad \mid |p'| < \omega \}) \cap B_\omega(x_0),
$$
where we have set $B_\omega(x_0) = \{y \in \BR^N \mid |y - x_0| < \omega\}$ and 
$p' = (p_1, \ldots, p_{N-1})$. 
Notice that $y = \Phi(p', 0) \in S_R \cap B_\omega(x_0)$ and 
$\rho(y, t) = H_\rho(\Phi(p',0), t)$. 
Let $\{x_k\}_{k=1}^K$ and  $\{\zeta_k\}_{k= 1}^K$ be a finite number of points on
$S_R$ and a partition of unity of $S_R$ such that ${\rm supp}\, \zeta_k 
\subset B_\omega(x_k)$ and $\sum_{k=1}^K\zeta_k(y) = 1$
on $S_R$. 
In the following, we represent functions on each $S_R\cap B_\omega(x_k)$, and 
to represent functions globally, we use the formula:
\begin{equation}\label{local:1}
f = \sum_{k=1}^K \zeta^1_kf \quad\text{in $S_R$}.
\end{equation}
Thus, for the detailed calculations, we only consider the domain $B_R \cap B_\omega(x_\ell)$
($\ell=1, \ldots, K$), 
and use the local coordinate system: $y = \Phi_\ell(p)$ for $p \in U$, where
we have written $\Phi=\Phi_\ell$, and 
$U = \{p \in \BR^N \mid 0 < p_N < \omega, \enskip |p'| < \omega\}$.

We write $\rho = \rho(y(p_1, \ldots, p_{N-1}, 0), t)$
in the following. 
By the chain rule, we have
\begin{equation}\label{chain:1}
\frac{\pd \rho}{\pd p_i} 
= \frac{\pd}{\pd p_i}H_\rho(\Phi_\ell(p_1, \ldots, p_{N-1}, 0), t)
= \sum_{m=1}^N\frac{\pd H_\rho}{\pd y_m}\frac{\pd \Phi_{\ell, m}}{\pd p_i}|_{p_N=0},
\end{equation}
where we have set $\Phi_\ell= {}^\top(\Phi_{\ell,1}, \ldots, \Phi_{\ell,N})$, 
and so, $\pd \rho/\pd p_i$ is defined in $B_\omega(x_0)$ 
by letting
\begin{equation}\label{chain:2}
\frac{\pd \rho}{\pd p_i} 
= \sum_{m=1}^N\frac{\pd H_\rho}{\pd y_m}\circ\Phi_\ell
\frac{\pd \Phi_{\ell,m}}{\pd p_i}.
\end{equation}


We first represent $\bn_t$. Since $\Gamma_t$ is given by
$x = y + \rho(y, t)\bn + \xi(t)$ for $y \in S_R$,  
$$\bn_t = a(\bn + \sum_{i=1}^{N-1}b_i\tau_i)
\quad\text{with $\tau_i = \frac{\pd}{\pd p_i}y 
=\frac{\pd}{\pd p_i}\Phi_\ell(p', 0)$}.
$$
The vectors $\tau_i$ ($i=1, \ldots, N-1$) 
form a basis of the tangent space of $S_R$ at $y=y(p_1,\ldots, p_{N-1})$.
Since $|\bn_t|^2 =1$, we have
\begin{equation}\label{norm:1}
1  = a^2(1 + \sum_{i,j=1}^{N-1}g_{ij}b_ib_j)
\quad\text{with $g_{ij} = \tau_i\cdot\tau_j$}
\end{equation}
because $\tau_i\cdot\bn=0$. The vectors $\dfrac{\pd x}{\pd p_i}$
$(i=1, \ldots, N-1)$ form a basis of the tangent space of $\Gamma_t$,
and so  $\bn_t\cdot\dfrac{\pd x}{\pd p_i}=0$.  Thus, we have
\begin{equation}\label{normal:3.1.1}
0 = a(\bn + \sum_{j=1}^{N-1}b_j\tau_j)\cdot
(\frac{\pd y}{\pd p_i} + \frac{\pd \rho}{\pd p_i}\bn 
+ \rho\frac{\pd \bn}{\pd p_i}).
\end{equation}
Since $\bn\cdot\dfrac{\pd y}{\pd p_i} = \bn\cdot\tau_i= 0$,
$\dfrac{\pd \bn}{\pd p_i}\cdot\bn
= 0$ (because of $|\bn|^2=1$), and $\dfrac{\pd y}{\pd p_i}\cdot
\dfrac{\pd y}{\pd p_j} = \tau_i\cdot\tau_j=g_{ij}$,
recalling that $\bn = R^{-1}y= R^{-1}\Phi_\ell$, 
by \eqref{normal:3.1.1} we have
$$
\frac{\pd \rho}{\pd p_i} + \sum_{j=1}^{N-1}(1+R^{-1}\rho)g_{ij}b_j = 0.
$$
Let $G=(g_{ij})$ and $G^{-1} = (g^{ij})$,  and then 
setting $\nabla_\Gamma'\rho = (\pd \rho/\pd p_1, \ldots, \pd\rho/\pd p_{N-1})$,
 we have 
\begin{equation}\label{norm:2}
b_i = -(1+R^{-1}\rho)^{-1}\sum_{k=1}^{N-1}g^{ik}\frac{\pd\rho}{\pd p_k}, 
\quad
b = -(1+R^{-1}\rho)^{-1}G^{-1}\nabla'_\Gamma\rho, 
\end{equation}
which leads to 
\begin{equation}\label{norm:3}
\bn_t  = a\Bigl(\bn -(1+R^{-1}\rho)^{-1} \sum_{i,j=1}^{N-1}g^{ij}\frac{\pd\rho}{\pd p_j}\tau_i\Bigr). 
\end{equation}
Moreover,  combining \eqref{norm:1} and \eqref{norm:2}, we have 
\begin{align*}
a = (1 + (1+R^{-1}\rho)^{-2}<G^{-1}\nabla_\Gamma'\rho, \nabla_\Gamma'\rho>)^{-1/2}.
\end{align*}
Using the formula: 
$$(1 + f)^{-1/2} = 1 - \frac12\int^1_0(1 + \theta f)^{-3/2}\,d\theta\,f,$$
we have
$$a = 1  - V_\Gamma(\rho, \nabla'_\Gamma\rho)
$$
with 
$$V_\Gamma(\rho, \nabla'_\Gamma\rho)
= \frac12\int^1_0(1+\theta(1+R^{-1}\rho)^{-2}<G^{-1}\nabla'_\Gamma\rho, \nabla'_\Gamma\rho>
)^{-3/2}\,d\theta
(1+R^{-1}\rho)^{-2}<G^{-1}\nabla'_\Gamma\rho, \nabla'_\Gamma\rho>.
$$ 
Combining these formulas obtained above gives 
\begin{equation}\label{repr:2.1} \bn_t = \bn
-\sum_{i,j=1}^{N-1}g^{ij}\frac{\pd \rho}{\pd p_j}\tau_i 
+\bV_\bn(\rho, \nabla'_\Gamma\rho)
\end{equation}
where we have set 
\begin{align*}
&\bV_\bn(\rho, \nabla'_\Gamma\rho) 
= \frac{\rho}{R+\rho}\sum_{i,j=1}^{N-1}g^{ij}\frac{\pd \rho}{\pd p_j}\tau_i 
- (\bn - \sum_{i,j=1}^{N-1}(1+R^{-1}\rho)^{-1}g^{ij}\frac{\pd\rho}{\pd p_j}\tau_i)
V_\Gamma(\rho, \nabla'_\Gamma\rho).
\end{align*}

From \eqref{chain:2}, $\nabla'_\Gamma\rho$ is extended to $\BR^N$ 
by the formula: $\nabla'_\Gamma\rho = (\nabla \Phi_\ell)\nabla 
\Psi_\rho\circ\Phi_\ell$,
and so we may write 
\begin{align*}
\bV_\bn(\rho, \nabla'_\Gamma\rho)
= \bV_{\bn, \ell}(\bk)\bar\nabla \Psi_\rho\otimes\bar\nabla\Psi_\rho
\end{align*}
on $B_\omega(x_\ell)$ with some function $\bV_{\bn, \ell}(\bk)
= \bV_{\bn,\ell}(y, \bk)$ defined on 
$B_\omega(x_\ell)\times\{\bk \mid |\bk| \leq \delta\}$ 
with $\bV_{\bn, \ell}(0) = 0$ possessing
the estimate
$$\|(\bV_{\bn, \ell}(\cdot,\bk), \pd_{\bk}
\bV_{\bn, \ell}(\cdot, \bk))
\|_{H^1_\infty(B_\omega(x_\ell))} \leq C
$$
with some constant $C$ independent of $\ell$.  Here and 
in the following $\bk$ are the variables corresponding
to $\bar\nabla H_\rho = (H_\rho, \nabla H_\rho)$. 
In view of \eqref{repr:2.1}, we have 
\begin{equation}\label{repr:2.1*}
\bn_t = \bn - \sum_{i,j=1}^{N-1}g^{ij}\tau_i\frac{\pd\rho}{\pd p_j}
+\bV_{\bn, \ell}(\bk)\bar\nabla\Psi_\rho\otimes\bar\nabla\Psi_\rho
\quad\text{on $B_\omega(x_\ell) \cap S_R$}.
\end{equation}
Thus, in view of \eqref{local:1} and \eqref{chain:2}, we may write 
\begin{equation}\label{normal:3.3}
\bn_t = \bn - \sum_{i,j=1}^{N-1}g^{ij}\pd'_j\rho\tau_i 
+ \bV_{\bn}(\bar\nabla H_\rho)\bar\nabla H_\rho \otimes \bar\nabla H_\rho
\quad\text{on $S_R$}, 
\end{equation}
where $\pd'_j\rho = \pd \rho/\pd p_j$ locally on $B_\omega(x_\ell) \cap S_R$,
$\bar\nabla H_\rho = (H_\rho, \nabla H_\rho)$,  and $\bV_{\bn}(\bk)$
is a matrix of functions defined on $\overline{B_R}\times\{\bk \mid |\bk| < \delta\}$
possessing the estimate:
\begin{equation}\label{normal:3.4}
\|(\bV_{\bn}, \pd_{\bk}\bV_{\bn})(\cdot, \bk)\|_{H^1_\infty(B_R)}
\leq C \quad\text{for $|\bar\bk| \leq \delta$}.
\end{equation}

And also we may write
\begin{equation}\label{normal:3.1}
\bn_t = \bn + \tilde\bV_\bn(\bar\nabla H_\rho)\bar\nabla H_\rho
\end{equation}
where  $\tilde\bV_{\bn}(\bk)$
is a matrix of functions defined on $\overline{B_R}\times\{\bk \mid |\bk| < \delta\}$
possessing the estimate:
\begin{equation}\label{normal:3.2}
\|(\tilde\bV_\bn(\cdot, \bk), \pd_{\bk}\tilde\bV_\bn(\cdot, 
\bk))\|_{H^1_\infty(B_R)}
\leq C \quad\text{for $|\bk| \leq \delta$}.
\end{equation}



We now consider the boundary condition:
\begin{equation}\label{52}
(\mu\bD(\bu) - \fp\bI)\bn_t  = \sigma H(\Gamma_t)\bn_t-p_0\bn_t
\end{equation}
It is convenient  
to divide the formula in \eqref{52} into 
the tangential part and normal part on $\Gamma_t$
as follows:
\begin{gather}
\Pi_t\mu\bD(\bu)\bn_t = 0, \label{55} \\
<\mu\bD(\bv)\bn_t, \bn_t> - \fp 
= \sigma <H(\Gamma_t)\bn_t, \bn_t>-p_0
= h_N(\bv, \rho)
\label{56}
\end{gather}
Here, $\Pi_t$ is defined by 
$\Pi_t\bd = \bd- < \bd, \bn_t>\bn_t$ for any 
$N$-vector of functions $\bd$.
In the last equation in equations \eqref{main:eq1}, we set
$\bh'(\bv, \rho) = \bh(\bv, \rho) - <\bh(\bv, \rho), \bn>\bn$ 
and $h_N(\bv, \rho) = <\bh(\bv, \rho), \bn>$.  
By \eqref{normal:3.1}  and \eqref{div:1}, we see that 
the boundary condition \eqref{55} is transformed to the following 
formula: 
\begin{equation}\label{58*}
(\mu\bD(\bv)\bn)_\tau = \bh'(\bv, \rho)
\quad\text{on $\Gamma\times(0, T)$}, 
\end{equation}
where we have set $\bd_\tau = \bd - <\bd, \bn>\bn$ and 
\begin{equation}\label{58}\begin{aligned}
&\bh'(\bv, \rho) 
 = - \mu\bD(\bv)\tilde \bV_\bn(\bar\nabla H_\rho)\bar\nabla H_\rho \\
&\quad+\mu\{<\bD(\bv)\tilde \bV_\bn(\bar\nabla H_\rho)\bar\nabla H_\rho, 
\bn+\tilde \bV_\bn(\bar\nabla H_\rho)\bar\nabla H_\rho>
(\bn+\tilde \bV_\bn(\bar\nabla H_\rho)\bar\nabla H_\rho)\\
&\quad+<\bD(\bv)\bn, \tilde \bV_\bn(\bar\nabla H_\rho)\bar\nabla H_\rho>
(\bn+\tilde \bV_\bn(\bar\nabla H_\rho)\bar\nabla H_\rho) \\
&\quad+<\bD(\bv)\bn, \bn>\tilde \bV_\bn(\bar\nabla H_\rho)\bar\nabla H_\rho\}
-\mu(\CD_\bD(\bk)\nabla\bv)(\bn+
\tilde \bV_\bn(\bar\nabla H_\rho)\bar\nabla H_\rho) \\
&\quad -\mu<(\CD_\bD(\bk)\nabla\bv)(\bn+
\tilde \bV_\bn(\bar\nabla H_\rho)\bar\nabla H_\rho), 
\bn+\tilde \bV_\bn(\bar\nabla H_\rho)\bar\nabla H_\rho>
(\bn + \tilde \bV_\bn(\bar\nabla H_\rho)\bar\nabla H_\rho).
\end{aligned}\end{equation}



Finally, we derive the nonlinear term 
$h_N(\bu, \rho)$ in \eqref{56}. Recall that 
$\Gamma_t$ is represented by 
$x = (R + \rho)\bn(y) + \xi(t)$  for $y \in S_R$, 
where $\bn = y/|y| \in S_1$.  Then, we have 
$$\frac{\pd x}{\pd p_j}= (R+\rho)\tau_j
+ \frac{\pd\rho}{\pd p_j}\bn
$$
where $\tau_j = \frac{\pd \bn}{\pd p_j}$, which forms
a basis of the tangent space of $S_1$.  Since 
$\tau_j\cdot \bn = 0$, 
the $(i, j)^{\rm th}$ component of the first fundamental form 
$G_t=(g_{tij})$ of $\Gamma_t$ is given by 
$$g_{tij}=\frac{\pd x}{\pd p_i}\cdot\frac{\pd x}{\pd p_j}
= (R+\rho)^2g_{ij} + \frac{\pd \rho}{\pd p_i}\frac{\pd \rho}{\pd p_j},$$
where $g_{ij} = \tau_i\cdot\tau_j$ is the $(i, j)^{\rm th}$ element of 
the  first fundamental form, $G$,  of $S_1$, and so 
\begin{align*}
G_t &= (R+\rho)^2(G+(R+\rho)^{-2}\nabla'_\Gamma\rho\otimes
\nabla'_\Gamma\rho) \\
&= (R+\rho)^2G(\bI + (R+\rho)^{-2}(G^{-1}\nabla'_\Gamma\rho)\otimes \nabla'_\Gamma\rho).
\end{align*}
Since
\begin{equation}\label{38} 
\det(\bI + \ba'\otimes\bb') = 1 + \ba'\cdot\bb',
\quad
(\bI + \ba'\otimes\bb')^{-1} = \bI - 
\frac{\ba'\otimes\bb'}{1+\ba'\cdot\bb'}
\end{equation}
for any $(N-1)$-vectors $\ba'$ and $\bb' \in \BR^{N-1}$, we have
\begin{align*}G_t^{-1}&=(R+\rho)^{-2}
\Bigl(\bI - \frac{(R+\rho)^{-2}(G^{-1}\nabla'_\Gamma\rho)\otimes \nabla'_\Gamma\rho}
{1 + (R+\rho)^{-2}<G^{-1}\nabla'_\Gamma\rho, \nabla'_\Gamma\rho>}
\Bigr)G^{-1} \\
&= (R+\rho)^{-2}G^{-1} + O_2.
\end{align*}
Here and in the following,  $O_2$ denotes a symbol defined by setting
$$O_2 = a_0H_\rho^2 + \sum_{j=1}^Nb_jH_\rho\frac{\pd H_\rho}{\pd y_j}
+ \sum_{i,j=1}^N c_{ij}\frac{\pd H_\rho}{\pd y_i}\frac{\pd H_\rho}{\pd y_j}
$$
with some coefficients $a_0$, $b_j$ and $c_{ij}$ defined on $\overline{B_R}$
 satisfying the estimate:
$|(a_0, b_j, c_{ij})(y, t)| \leq C$ 
and $|\nabla(a_0, b_j, c_{ij})(y, t)| \leq C|\nabla^2 H_\rho(y, t)|$ provided that
$\|H_\rho\|_{L_\infty((0, 2\pi), H^1_\infty(B_R))} \leq \delta$. 
In particular, 
$$g_t^{ij} = (R+\rho)^{-2}g^{ij} + O_2, 
$$
componentwise.

We next calculate the Christoffel symbols of $\Gamma_t$. Since
\begin{align*}
\tau_{ti} &= (R+\rho)\tau_i + \frac{\pd\rho}{\pd p_i}\bn, \\
\tau_{tij} &= (R+\rho)\tau_{ij} + 
\frac{\pd\rho}{\pd p_j}\tau_i + 
\frac{\pd\rho}{\pd p_i}\tau_j
+ \frac{\pd^2\rho}{\pd p_i\pd p_j}\bn,
\end{align*}
we have
\begin{align*}
<\tau_{tij}, \tau_{t\ell}> &= (R+\rho)^2<\tau_{ij}, \tau_\ell>
+ (R+\rho)(\frac{\pd\rho}{\pd p_\ell}\ell_{ij}
+ g_{i\ell}\frac{\pd\rho}{\pd p_j}
+ g_{j\ell}\frac{\pd\rho}{\pd p_i})\\
&+ \frac{\pd^2\rho}{\pd p_i\pd p_j}\frac{\pd \rho}{\pd p_\ell},
\end{align*}
where $\ell_{ij} = <\tau_{ij}, \bn>$, and so
\begin{align*}
\Lambda^k_{tij}  &= g_t^{k\ell}<\tau_{tij}, \tau_{t\ell}> \\
&=\bigg((R+\rho)^{-2}g^{k\ell} + O_2\bigg) \bigg( (R+\rho)^2<\tau_{ij}, \tau_\ell>\\
&+ (R+\rho)(\frac{\pd\rho}{\pd p_\ell}\ell_{ij}
+ g_{i\ell}\frac{\pd\rho}{\pd p_j}
+ g_{j\ell}\frac{\pd\rho}{\pd p_i}) + \frac{\pd^2\rho}{\pd p_i\pd p_j}\frac{\pd \rho}{\pd p_\ell}\bigg)\\
& = \Lambda^k_{ij} + (R+\rho)^{-1}g^{k\ell}(\frac{\pd\rho}{\pd p_\ell}\ell_{ij}
+ \delta^k_i\frac{\pd\rho}{\pd p_j}
+ \delta^k_j\frac{\pd\rho}{\pd p_i})\\
&+((R+\rho)^{-2}g^{k\ell}\frac{\pd\rho}{\pd p_\ell}
+ O_2)\frac{\pd^2\rho}{\pd p_i \pd p_j}
+ O_2.
\end{align*}
Thus, 
\begin{align*}
&\Delta_{\Gamma_t}f  = g_t^{ij}(\pd_i\pd_jf- \Lambda^k_{tij}\pd_kf) \\
& =(R+\rho)^{-2}g^{ij}(\pd_i\pd_jf - \Lambda^k_{ij}\pd_kf) 
+(A^k(\nabla'_p\rho,\nabla'^2_p\rho)\pd_kf + O_2\otimes(\bar\nabla'^2f)
\end{align*}
where   $\bar\nabla'^2f$ is an $((N-1)^2+N)$-vector  of
the form: $\bar\nabla'^2f = (\pd_i\pd_jf, \pd_if, f \mid i, j=1, \ldots, N-1)$,
$\pd_i = \pd/\pd p_i$,  $\nabla'^2_p = (\pd_i\pd_j\rho \mid i, j=1, \ldots, N-1)$, and 
\begin{align*}
A^k(\nabla'_p\rho, \nabla'^2_p\rho)
&= -(R+\rho)^{-3}g^{ij}g^{k\ell}(\frac{\pd\rho}{\pd p_\ell}\ell_{ij}
+ \delta^k_i\frac{\pd\rho}{\pd p_j}
+\delta^k_j\frac{\pd \rho}{\pd p_i}) \\
&\qquad-(R+\rho)^{-2}((R+\rho)^{-2}g^{ij}g^{k\ell}\frac{\pd\rho}{\pd p_\ell}
+ g^{ij}O_2)\frac{\pd^2\rho}{\pd p_i\pd p_j},
\end{align*}
and so
\begin{align*}
H(\Gamma_t)\bn_t &= \Delta_{\Gamma_t}[(R+\rho)\bn+ \xi(t)] \\
& =(R+\rho)^{-2}g^{ij}(\pd_i\pd_j - \Lambda^k_{ij}\pd_k)((R+\rho)\bn)
+ (A^k\nabla^2_p\rho )\pd_k((R+\rho)\bn) \\
&\qquad + O_2\otimes\bar\nabla'^2((R+\rho)\bn)\\
& = (R+\rho)^{-1}g^{ij}(\pd_i\pd_j\bn-\Lambda^k_{ij}\pd_k\bn)
+ (R+\rho)^{-2}g^{ij}(\pd_i\rho\pd_j\bn + \pd_j\rho\pd_i\bn)\\
&\quad + (R+\rho)^{-2}g^{ij}(\pd_i\pd_j\rho-\Lambda^k_{ij}\pd_k\rho)\bn
+ A^k(\nabla'_p\rho, \nabla'^2_p\rho)(\pd_k\rho)\bn \\
& \quad + A^k(\nabla'_p\rho, \nabla'^2_p\rho)(R+\rho)\pd_k\bn 
+ O_2\otimes\bar\nabla'^2(R+\rho)
\end{align*}
Combining this formula with \eqref{repr:2.1},  using $<\pd_i\bn, \bn> = 0$, $<\bn,  \tau_\ell> = 0$,  
$\Delta_{S_1}\bn=
-(N-1)\bn$, and \eqref{chain:1}  gives 
\begin{align*}
&<H(\Gamma_t)\bn_t, \bn_t> \\
&\quad = -(R+\rho)^{-1}(N-1) + (R+\rho)^{-2}
\Delta_{S_1}\rho + (O_1+O_2)\otimes \nabla^2_p\rho + O_2,
\end{align*}
where $O_1$ denotes a symbol defined by setting
$$O_1 = a_0'H_\rho + \sum_{j=1}^Nb_j' \frac{\pd H_\rho}{\pd y_j}$$
with some coefficients $a_0'$ and $b_j'$ defined on $\overline{B_R}$ satisfying
the estimate: $|(a_0', b_j')(y, t)|\leq C$ and $|\nabla(a_0', b_j')(y, t)|
\leq C|\nabla^2H_\rho(y, t)|$ provided that $\|H_\rho\|_{L_\infty((0, 2\pi),
H^1_\infty(B_R))} \leq \delta$. 
Since
\begin{align*}
(R+\rho)^{-1}=&R^{-1} - \rho R^{-2} + O(\rho^2), \\
(R+\rho)^{-2}\Delta_{S_1}\rho &= R^{-2}\Delta_{S_1}\rho 
+2R^{-3}\rho\Delta_{S_1}\rho + O_2\otimes\nabla^2_p\rho,
\end{align*}
we have 
\begin{equation}\label{44}\begin{aligned}
&<H(\Gamma_t)\bn_t, \bn_t>  = -\frac{N-1}{R} + \CB\rho 
+ 
(O_1+ O_2)\otimes\nabla^2_p\rho 
 + O_2.
\end{aligned}\end{equation}
Setting $p_0 = -(N-1)/R$, from \eqref{52} 
we have
$$<\mu\bD(\bv)\bn, \bn> - \fq - \sigma\CB\rho
= h_N(\bv, \rho)
$$
on $S_R\times(0, 2\pi) $.  Here, in view of  \eqref{div:1} and \eqref{44},  
we have defined  $h_N(\bv, \rho)$ by letting 
\begin{equation}\label{non:g.5}
h_N(\bv, \rho) = \bV_{h,N}(\bar\nabla H_\rho)
\bar\nabla H_\rho\otimes\nabla\bv
+ \sigma \tilde \bV'_\Gamma(\bar\nabla H_\rho)\bar\nabla H_\rho\otimes
\bar\nabla^2 H_\rho,
\end{equation}
where $\bV_{h, N}(\bk)$ and $\tilde\bV'_\Gamma(\bk)$ are 
functions defined on $\overline{B_R}\times\{\bk \mid |\bk| < \delta\}$
possessing the estimate:
\begin{align*}
&\sup_{|\bk| < \delta}\|(\bV_{h, N}(\cdot, \bk), 
\pd_{\bk}\bV_{h, N}(\cdot, \bk))\|_{H^1_\infty(B_R)}
\leq C, \\
&\sup_{|\bk| < \delta}\|(\tilde\bV'_{\Gamma}(\cdot, \bk), 
\pd_{\bk}\tilde\bV'_{\Gamma}(\cdot, \bk))\|_{H^1_\infty(B_R)}
\leq C
\end{align*}
for some constant $C$. 

\subsection{Two-phase problem case}
Let $\Omega_+ = B_R$ and $\Omega_- = \Omega\setminus(B_R\cup S_R)$.  
In the two-phase case, we let
$$\Psi_+(y, t) =R^{-1}H_\rho(y, t)y + \xi(t), \quad
\Psi_-(y, t) =  \varphi(y)(R^{-1}H_\rho(y,t) y + \xi(t)).
$$ 
Let $J_\pm(t)$ be the Jacobian of the map: $x  = y+ \Psi_\pm(y, t)$ for $y \in \Omega_\pm$,
which are defined by setting 
\begin{equation}\label{jacob}
\left\{
\begin{aligned}  J_+(t) &= \det(I + R^{-1}\nabla_y (H_\rho(y, t)y)) \quad&\text{for $y \in \Omega_+$}, \\
J_-(t)&=\det(I + \nabla_y(\varphi(y)(R^{-1}(H_\rho(y, t)y + \xi(t))) \quad&\text{for $y \in \Omega_-$}. 
\end{aligned}\right.
\end{equation}
Notice that 
$$\xi(t) = \int^t_0\int_{B_R}\bv_+(y, s)J_{+}(s)\,dyds + c$$
where $c$ is the unique constant for which the following equality holds:
$$\int^{2\pi}_0\xi(t) = 0.$$
We assume that 
\begin{equation}\label{small:3.1}
\sup_{t \in (0, 2\pi)}\|H_\rho(\cdot, t)\|_{H^1_\infty(\Omega_\pm)} \leq \delta,
\quad 
\sup_{t \in (0, 2\pi)} |\xi(t)| \leq \delta
\end{equation}
with suitably small constant $\delta > 0$. Since 
$$|\xi(t)| \leq C\sup_{t \in (0, 2\pi)}\|\bv(\cdot, t)\|_{L_q(B_R)}\sup_{t \in (0, 2\pi)}
|J_+(t)||B_R|,
$$
there exists  a constant $\delta_1 > 0$ such that if 
\begin{equation}\label{small:3.2}
\sup_{t \in (0, 2\pi)}\|\bv_+(\cdot, t)\|_{L_q(B_R)} \leq \delta_1
\end{equation}
then the condition for $\xi(t)$ in \eqref{small:3.1} holds. 
Thus, in the proof of Theorem \ref{main:thm2} below, we assume that 
the conditions \eqref{small:3.1} and  \eqref{small:3.2} hold. 

Set $J_{0\pm}(t) = J_\pm(t) - 1$.  
By the chain rule, we have
$$\nabla_x = (\bI + \bV_{\pm0}(\bk_\pm))\nabla_y, 
\quad \frac{\pd}{\pd x_i} + \sum_{j=1}^N V_{\pm 0ij}(\bk_\pm)\frac{\pd}{\pd y_j}
$$
where $\bV_{\pm0}(\bk_\pm)$ is given by 
$$\bV_{\pm 0}(\bk_\pm) 
= 
\left\{
\begin{aligned}
&(\bI + \nabla_y (R^{-1}H_\rho(y, t) y)^{-1} - \bI &\quad&\text{for $y \in \Omega_+$}, \\
&(\bI + \nabla_y\Psi_{-, \rho}(y, t))^{-1} - \bI
&\quad&\text{for $y \in \Omega_-$}.
\end{aligned}\right. 
$$
Here and in the following,  $\bk_+$ and $\bk_-$ denote
the   variables corresponding to 
$(H_\rho, \nabla H_\rho)$ and $(\Psi_{-, \rho}, \nabla \Psi_{-, \rho})$. 

Employing the same argument as for obtaining the formulas in \eqref{form:g}, we have 
\begin{equation}\label{form:g*}\begin{aligned}
g_\pm(\bv, \rho) &= -(J_{0\pm}(\bk_\pm)\dv\bv_\pm + (1 + J_{0\pm}(\bk_\pm))\bV_{0\pm}(\bk_\pm)
:\nabla\bv_\pm), \\
\bg_\pm(\bv, \rho) &= -(1+J_{0\pm}(\bk_\pm)){}^\top\bV_{0\pm}(\bk_\pm)\bv_\pm.
\end{aligned}\end{equation}
And also, from \eqref{right:1} we have 
\begin{equation}\label{right:1*}
\bG_\pm(y, t) = (\bI + \nabla \Psi_\pm(y, t))\bff(y + \Psi_\pm(y, t), t), \quad
\bF_\pm(\bv, \rho) = {}^\top(F_{1\pm}(\bv, \rho), \ldots, F_{N\pm}(\bv, \rho))
\end{equation}
with
\begin{align*}
F_{i\pm}(\bv, \rho)  &= -\sum_{j,k=1}^N(v_{\pm j} 
- \frac{\pd\Psi_{\pm j}}{\pd t}) 
(\delta_{jk} + V_{0jk}(\bk_\pm))\frac{\pd v_{\pm i}}{\pd y_k} \nonumber
\\
&\quad-\sum_{\ell=1}^N\pd_i\Psi_{\pm \ell}
\Bigl(\frac{\pd v_{\pm\ell}}{\pd t} 
+ \sum_{j,k=1}^N(v_{\pm j} - \frac{\pd\Psi_{\pm j}}{\pd t})
(\delta_{jk} + V_{\pm 0jk}(\bk_\pm))\frac{\pd v_{\pm \ell}}{\pd y_k}\Bigr) \nonumber \\
&\quad + \mu\Bigl(\sum_{j=1}^N \frac{\pd}{\pd y_j}(\CD_\bD(\bk_\pm)\nabla\bv_\pm)_{ij}
+ \sum_{j,k=1}^NV_{0jk}(\bk_\pm)\frac{\pd}{\pd y_k}(\bD(\bv_\pm)_{ij}
+ (\CD_\bD(\bk_\pm)\nabla\bv_\pm)_{ij})  \nonumber \\ 
&\quad + \sum_{j,k,\ell=1}^N\pd_i\Psi_{\pm\ell}(\delta_{jk} + V_{\pm 0jk}(\bk))
\frac{\pd}{\pd y_k}(\bD(\bv_\pm)_{\ell j} + (\CD_\bD(\bk_\pm)\nabla\bv_\pm)_{\ell j})
\Bigr). \label{form:2f}
\end{align*} 
Here and in the following, we have set $\Psi_{\pm}(y, t) = {}^\top(\Psi_{\pm1}(y, t), 
\ldots, \Psi_{\pm N}(y, t))$, $\bv_\pm = {}^\top(v_{\pm1}, \ldots, v_{\pm N})$, and
$$(\CD_\bD(\bk_\pm)\nabla \bv_\pm)_{ij} = \sum_{k=1}^N
\Bigl(V_{\pm0jk}(\bk_\pm)\frac{\pd v_{\pm i}}{\pd y_k} 
+ V_{\pm0ik}(\bk_\pm)\frac{\pd v_{\pm j}}{\pd y_k}\Bigr). 
$$
To define the right hand side of the transmission condition, we use \eqref{58}
and \eqref{non:g.5}.  We first introduce a symbol $((\cdot))$.
For $f_\pm$, let $[f_\pm]$ be a suitable extension of $f_\pm$ to $\Omega_\mp$ such that 
$$\|[f_\pm]\|_{H^k_q(\Omega_\mp)} \leq C_k\|f_\pm\|_{H^k_q(\Omega_\pm)}, 
\quad 
\|\pd_t[f_\pm]\|_{H^k_q(\Omega_\mp)} \leq C_k\|\pd_tf_\pm\|_{H^k_q(\Omega_\pm)}
$$
with some constant $C_k$.  Here, if the right-hand side is finite, then 
$[f_\pm]$ and $\pd_t[f_\pm]$ exist and the estimates above hold. 
In particular, we set $H^0_q(\Omega_\pm) = L_q(\Omega_\pm)$. We set 
$$ex[f_\pm](y, t) = \begin{cases} f_\pm(y, t) & \quad\text{for $y \in \Omega_\pm$}, \\
[f_\pm](y, t) & \quad\text{for $y \in \Omega_\mp$}.
\end{cases}
$$
And then, $((f))$ is defined by setting
$$((f)) = ex[f_+] - ex[f_-]. $$
Using this symbol, we can proceed as for the derivation of \eqref{58} and \eqref{non:g.5}
and define $\tilde\bh'(\bv, \rho)$ and $\tilde h_N(\bv, \rho)$ by setting  
 \begin{equation}\label{3.58}\begin{aligned}
&\tilde\bh'(\bv, \rho) 
 = - \mu((\bD(\bv)))\tilde \bV_\bn(\bar\nabla H_\rho)\bar\nabla H_\rho \\
&\quad+\mu\{<((\bD(\bv)))\tilde \bV_\bn(\bar\nabla H_\rho)\bar\nabla H_\rho, 
\bn+\tilde \bV_\bn(\bar\nabla H_\rho)\bar\nabla H_\rho>
(\bn+\tilde \bV_\bn(\bar\nabla H_\rho)\bar\nabla H_\rho)\\
&\quad+<((\bD(\bv)))\bn, \tilde \bV_\bn(\bar\nabla H_\rho)\bar\nabla H_\rho>
(\bn+\tilde \bV_\bn(\bar\nabla H_\rho)\bar\nabla H_\rho) \\
&\quad+<((\bD(\bv)))\bn, \bn>\tilde \bV_\bn(\bar\nabla H_\rho)\bar\nabla H_\rho\}
-\mu((\CD_\bD(\bk)\nabla\bv))(\bn+
\tilde \bV_\bn(\bar\nabla H_\rho)\bar\nabla H_\rho) \\
&\quad -\mu<((\CD_\bD(\bk)\nabla\bv))(\bn+
\tilde \bV_\bn(\bar\nabla H_\rho)\bar\nabla H_\rho), 
\bn+\tilde \bV_\bn(\bar\nabla H_\rho)\bar\nabla H_\rho>
(\bn + \tilde \bV_\bn(\bar\nabla H_\rho)\bar\nabla H_\rho) \\
&\tilde h_N(\bv, \rho) = \bV_{h,N}(\bar\nabla H_\rho)\bar\nabla H_\rho\otimes((\nabla\bv))
+ \sigma\tilde \bV'_\Gamma(\bar\nabla H_\rho)\bar\nabla H_\rho
\otimes\bar\nabla^2 H_\rho.
\end{aligned}\end{equation}
And then, we set $\tilde \bh(\bv, \rho) =(\tilde\bh'(\bv, \rho), \tilde h_N(\bv, \rho))$. 

\section{On periodic solutions of the linearized equations}\label{sec:4}

In this section, we shall prove the $L_p$-$L_q$ maximal regularity of $2\pi$-periodic 
solutions of the linearized equations.
\subsection{On linearized problem of one-phase problem}
In this subsection, we consider the $L_p$-$L_q$ maximal regularity of 
periodic solutions to  linearized equations:
\begin{equation}\label{4.1}\begin{aligned}
\pd_t\bu + \CL\bu_S-\DV(\mu\bD(\bu)-\fp\bI) = \bF&&\quad&\text{in $B_R\times(0, 2\pi)$}, \\
\dv \bu =G = \dv\bG&&\quad&\text{in $B_R\times(0, 2\pi)$}, \\
\pd_t\rho + \CM\rho-(\CA\bu)\cdot\bn=D&&\quad&\text{on $S_R\times(0, 2\pi)$},\\
(\mu\bD(\bu)-\fp\bI)\bn - (\CB_R\rho)\bn=\bH&&\quad&
\text{on $S_R\times(0, 2\pi)$}, 
\end{aligned}\end{equation}
where $\CL$, $\CM$, and $\CA$ are the linear operators defined in \eqref{op:1}. 
We shall prove the unique existence theorem of $2\pi$-periodic solutions of equations
\eqref{4.1}.  Our main result is this section is stated as follows.
\begin{thm}\label{thm:linear.main1} Let $1 < p, q < \infty$.  Then, for any 
$\bF$, $D$, $G$, $\bG$ and $\bH$ with
\begin{align*}
\bF & \in L_{p, {\rm per}}((0, 2\pi), L_q(B_R)^N), \quad 
D \in L_{p, {\rm per}}((0, 2\pi), W^{2-1/q}_q(S_R)) \\
G & \in L_{p, {\rm per}}((0, 2\pi), H^1_q(B_R)) \cap H^{1/2}_{p, {\rm per}}((0, 2\pi), L_q(B_R)), 
\quad \bG \in H^1_{p, {\rm per}}((0, 2\pi). L_q(B_R)^N), \\
\bH& \in L_{p, {\rm per}}((0, 2\pi), H^1_q(B_R)^N) \cap H^{1/2}_{p, {\rm per}}((0, 2\pi), L_q(B_R)^N), 
\end{align*}
problem \eqref{4.1} admits unique solutions $\bu$, $\fp$ and $\rho$ with 
\begin{align*}
\bu & \in L_{p, {\rm per}}((0, 2\pi), H^2_q(B_R)^N) \cap H^1_{p, {\rm per}}
((0, 2\pi), L_q(B_R)^N), \\
\fp & \in L_{p, {\rm per}}((0, 2\pi), H^1_q(B_R)), \\
\rho & \in L_{p, {\rm per}}((0, 2\pi), W^{3-1/q}_q(S_R)) \cap H^1_{p, {\rm per}}
((0, 2\pi), W^{2-1/q}_q(S_R))
\end{align*}
possessing the estimate:
\begin{equation}\label{main:est1}\begin{aligned}
&\|\bu\|_{L_p((0, 2\pi), H^2_q(B_R))} + \|\pd_t\bu\|_{L_p((0, 2\pi), L_q(B_R))} 
+ \|\nabla \fp\|_{L_p((0, 2\pi), L_q(B_R))}  \\
&\quad + \|\rho\|_{L_p((0, 2\pi), W^{3-1/q}_q(S_R))} + 
\|\pd_t\rho\|_{L_p((0, 2\pi), W^{2-1/q}_q(S_R))} \\
&\qquad \leq C\{\|\bF\|_{L_p((0, 2\pi), L_q(B_R))} + \|D\|_{L_p((0, 2\pi), W^{2-1/q}_q(S_R))} 
+ \|\pd_t\bG\|_{L_p((0, 2\pi), L_q(B_R))} \\
&\quad\qquad + \|(G, \bH)\|_{L_p((0, 2\pi), H^1_q(B_R))}
+ \|(G, \bH)\|_{H^{1/2}_p((0, 2\pi), L_q(B_R))}\}
\end{aligned}\end{equation}
for some constant $C > 0$. 
\end{thm}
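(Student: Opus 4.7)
Following the strategy announced in the introduction, I decompose every periodic function into its stationary and oscillatory parts, $f=f_S+f_\perp$ with $f_S=(2\pi)^{-1}\int_0^{2\pi}f(t)\,dt$, and split the unknowns and data accordingly. Since $\pd_t$ annihilates $f_S$ and acts as multiplication by $ik$ on the Fourier mode of $f_\perp$ with $k\in\BZ\setminus\{0\}$, and since the operators $\CL$ and $\CM$ act on the stationary part only, the system \eqref{4.1} decouples into two independent problems for $(\bu_S,\fp_S,\rho_S)$ and $(\bu_\perp,\fp_\perp,\rho_\perp)$. I would prove each one separately and then combine the estimates.

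\textbf{Stationary part.} Projecting \eqref{4.1} onto constants in $t$ gives the time-independent Stokes-type system
\begin{align*}
\CL\bu_S-\DV(\mu\bD(\bu_S)-\fp_S\bI)&=\bF_S,\quad \dv\bu_S=G_S=\dv\bG_S &&\text{in }B_R,\\
\CM\rho_S-(\CA\bu_S)\cdot\bn&=D_S,\quad
(\mu\bD(\bu_S)-\fp_S\bI)\bn-(\CB_R\rho_S)\bn=\bH_S&&\text{on }S_R.
\end{align*}
The operators $\CL$, $\CM$, $\CA$ were designed precisely to kill the kernel of the unperturbed Stokes free-boundary operator (the rigid motions $\bp_k$ and the eigenfunctions $\{1,y_1/R,\dots,y_N/R\}$ of $\CB_R$ on $S_R$). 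I would verify that the resulting augmented elliptic problem has trivial kernel and cokernel, and then invoke standard elliptic regularity for the Stokes system with free-boundary data together with Agmon--Douglis--Nirenberg theory for $\CB_R$ on $S_R$ to obtain existence, uniqueness and the desired $H^2_q\times H^1_q\times W^{3-1/q}_q$ estimate of $(\bu_S,\fp_S,\rho_S)$ in terms of $\bF_S, G_S, \bG_S, D_S, \bH_S$.

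\textbf{Oscillatory part.} On $f_\perp$ the operators $\CL$ and $\CM$ vanish identically (they send everything into constants in $t$), so Fourier expansion $\bu_\perp(y,t)=\sum_{k\ne 0}e^{ikt}\bu_k(y)$ reduces the oscillatory problem to the family of resolvent problems, for $k\in\BZ\setminus\{0\}$,
\begin{align*}
ik\,\bu_k-\DV(\mu\bD(\bu_k)-\fp_k\bI)&=\bF_k,\quad \dv\bu_k=G_k=\dv\bG_k,\\
ik\,\rho_k-(\CA\bu_k)\cdot\bn&=D_k,\quad
(\mu\bD(\bu_k)-\fp_k\bI)\bn-(\CB_R\rho_k)\bn=\bH_k.
\end{align*}
The $\CR$-solver machinery of \cite{S1,S2,S3,S4,SS1} supplies an operator family $\CS(\lambda)$, defined for $\lambda\in i\BR_{\lambda_0}$, that inverts this problem and is $\CR$-bounded in $\CL(X_q,Y_q)$ for appropriate data and solution spaces $X_q,Y_q$; the small-frequency modes $|k|<\lambda_0$ are handled directly by the elliptic theory used for the stationary part, with $(ik)$-terms absorbed as lower order. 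Feeding $\CS(ik)$ into the transference theorem of \cite{EKS1}, which converts $\CR$-boundedness on $i\BR$ into $L_p$-boundedness of the associated periodic operator-valued Fourier multiplier on $\BT$, yields the full $L_p((0,2\pi);L_q)$ maximal regularity estimate for $(\bu_\perp,\fp_\perp,\rho_\perp)$, including the $H^{1/2}_p L_q$-norms of $G$ and $\bH$ that encode the correct interpolation.

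\textbf{Assembly and main obstacle.} Adding the stationary and oscillatory solutions and summing the two estimates yields \eqref{main:est1}; uniqueness follows from the same estimate applied with zero data. The main technical step is the oscillatory part: one must check that the existing $\CR$-solvers, constructed for the free-boundary Stokes resolvent without the compact lower-order terms $\CA\bu\cdot\bn$, extend to the present system (where $\CA$ couples $\bu$ to a nonlocal average), and that the symbol class produced by $\CS(ik)$ lies in the scope of the periodic transference theorem. Once this $\CR$-bound is in hand, the remaining work is bookkeeping of norms.
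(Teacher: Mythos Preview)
Your plan is essentially the paper's own proof: it too splits off the stationary mode $k=0$ (Theorem~\ref{thm:s1}), treats the finitely many low nonzero modes $1\le|k|\le k_0+3$ individually by a Fredholm/uniqueness argument (Theorem~\ref{thm:rs.3}), and handles the high modes $|k|\ge k_0+4$ via the $\CR$-solver of Theorem~\ref{thm:rs.1} combined with Weis' multiplier theorem and the transference theorem (Theorems~\ref{Weis} and~\ref{ESK}).

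One slip to fix: your claim that $\CM$ vanishes on the oscillatory part is incorrect. Unlike $\CL$, which in \eqref{4.1} is applied only to $\bu_S$, the operator $\CM\rho=(\rho,1)_{S_R}+\sum_j(\rho,\omega_j)_{S_R}y_j$ is a purely spatial map acting on the full $\rho$, so $\CM\rho_\perp$ is still oscillatory in $t$ and does not vanish. The resolvent problem for $k\ne 0$ therefore reads $ik\rho_k+\CM\rho_k-(\CA\bu_k)\cdot\bn=D_k$, with $\CM$ retained (this is exactly the paper's \eqref{4.2}). This does not damage your plan: the $\CR$-solver you invoke from \cite{S3} is already constructed for the system with $\CM$ and $\CA$ present, and for the low modes the $\CM$-term is a compact perturbation that fits into the Fredholm argument. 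Just keep $\CM$ in the mode-$k$ equations.
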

To prove Theorem \ref{thm:linear.main1}, 
our approach is to use the $\CR$-solver, Weis' operator-valued Fourier 
multiplier theorem \cite{Weis} and a transference theorem, which is 
created in Eiter, Kyed and Shibata \cite{EKS1}. To introduce the 
notion of $\CR$-solver, we introduce the $\CR$-boundedness of operator families.
\begin{dfn}\label{def:4.1} Let $X$ and $Y$ be two Banach spaces.  A family of operators
$\CT \subset \CL(X, Y)$ is called $\CR$-bounded on $\CL(X, Y)$, if 
there exist a constant $C > 0$ and $p \in [1, \infty)$ such that for each
$n \in \BN$, $\{T_j\}_{j=1}^n \in \CT^n$, 
and $\{f_j\}_{j=1}^n \in X^n$, we have
$$\|\sum_{k=1}^n r_kT_kf_k\|_{L_p((0,1), Y)} \leq 
C\|\sum_{k=1}^nr_kf_k\|_{L_p((0, 1), X)}.
$$
Here, the Rademacher functions $r_k$, $k \in \BN$, are given by
$r_k : [0, 1] \to \{-1, 1\}$, $t \mapsto {\rm sign}\,(\sin 2^k\pi t)$. 
The smallest such $C$ is called $\CR$-bound of $\CT$ on 
$\CL(X, Y)$, which is denoted by $\CR_{\CL(X, Y)}\CT$.
\end{dfn}
We quote Weis' operator-valued Fourier multiplier theorem and the transference theorem
for operator-valued Fourier multipliers.
\begin{thm}[Weis]\label{Weis} Let $X$ and $Y$ be two UMD Banach spaces. Let 
$m \in C^1(\BR\setminus\{0\}, \CL(X, Y))$ satisfies the multiplier condition:
$$\CR_{\CL(X, Y)}\{ (\tau \pd_\tau)^\ell m(\tau) \mid \tau \in \BR\setminus\{0\}\}
\leq r_b
$$
for $\ell = 0, 1$ with some constant $r_b$.  Let $T_m$ be a multiplier defined by 
$T_m[f] = \CF^{-1}[m\CF[f]]$. Then, $T_m \in \CL(L_p(\BR, X), L_p(\BR, Y))$ with
$$\|T_m[f]\|_{L_p(\BR, Y)} \leq C_pr_b\|f\|_{L_p(\BR, X)}$$
for any $p \in (1, \infty)$ with some constant $C_p$ depending on $p$ but independent
of $r_b$. 
\end{thm}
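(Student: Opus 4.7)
The plan is to follow the strategy of Weis' original paper: decompose the multiplier $m$ dyadically on the Fourier side, establish $\CR$-boundedness of the resulting family of dyadic Fourier multipliers $T_{m_k}$ from the hypothesis on $m$ and $\tau m'$, and then assemble the pieces by invoking Bourgain's vector-valued Littlewood-Paley equivalence, which is available precisely because $X$ and $Y$ are UMD.

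Concretely, I would fix a smooth partition of unity: choose $\phi\in C_c^\infty(\BR)$ with $\operatorname{supp}\phi\subset\{1/2<|\tau|<2\}$ and $\sum_{k\in\BZ}\phi(2^{-k}\tau)=1$ for $\tau\neq 0$, and set $m_k(\tau)=m(\tau)\phi(2^{-k}\tau)$ and $\Delta_k=T_{\phi(2^{-k}\cdot)}$. For Schwartz $f$ one has $T_mf=\sum_k T_{m_k}f$ and $T_{m_k}f = T_{m_k}\Delta_k'f$, where $\Delta_k'$ is a slightly fattened Littlewood-Paley projection (so that $\phi(2^{-k}\cdot)\Delta_k' = \phi(2^{-k}\cdot)$). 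The central step is to prove
\begin{equation*}
\CR_{\CL(L_p(\BR,X),L_p(\BR,Y))}\{T_{m_k}:k\in\BZ\}\leq C\,r_b.
\end{equation*}
To obtain this I would represent $T_{m_k}$ as a continuous average of operators $m(\tau)$ on the annulus $\{2^{k-1}<|\tau|<2^{k+1}\}$: using the fundamental theorem of calculus one writes
\begin{equation*}
m_k(\tau)=m(2^k)\phi(2^{-k}\tau)+\int_{2^k}^\tau m'(s)\,ds\,\phi(2^{-k}\tau),
\end{equation*}
and after interchanging integrals the operator $T_{m_k}$ is a Bochner integral of operators of the form $m(s)\,T_{\psi_{k,s}}$ and $(s\,m'(s))\,T_{\tilde\psi_{k,s}}$, where $\psi_{k,s},\tilde\psi_{k,s}$ are scalar Fourier multipliers of class $S^0$ whose $L_p$-norms are uniformly bounded in $k$ and $s$. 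Kahane's contraction principle, together with the $\CR$-boundedness of $\{m(\tau):\tau\in\BR\setminus\{0\}\}$ and $\{\tau m'(\tau):\tau\in\BR\setminus\{0\}\}$, then yields the desired $\CR$-bound of $\{T_{m_k}\}$ with constant $\lesssim r_b$.

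To finish, I would invoke Bourgain's vector-valued Littlewood-Paley theorem, valid in any UMD space $Z\in\{X,Y\}$:
\begin{equation*}
\|g\|_{L_p(\BR,Z)}\simeq \Bigl\|\sum_{k\in\BZ}r_k\Delta_k g\Bigr\|_{L_p([0,1]\times\BR,Z)}.
\end{equation*}
Applying this to $T_mf=\sum_k T_{m_k}\Delta_k'f$ in $Y$ and unrolling via the $\CR$-bound of $\{T_{m_k}\}$ proved above, then applying Bourgain's equivalence again in $X$, gives
\begin{align*}
\|T_mf\|_{L_p(\BR,Y)}
&\simeq \Bigl\|\sum_k r_k T_{m_k}\Delta_k' f\Bigr\|_{L_p([0,1]\times\BR,Y)}\\
&\leq C\,r_b\,\Bigl\|\sum_k r_k \Delta_k' f\Bigr\|_{L_p([0,1]\times\BR,X)}
\simeq r_b\|f\|_{L_p(\BR,X)},
\end{align*}
which is the claimed bound.

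The main obstacle is the $\CR$-boundedness of the dyadic pieces $\{T_{m_k}\}$. Unlike the scalar Mih\-lin case, pointwise kernel estimates are not available and would not be enough: one must transfer the $\CR$-boundedness from the symbol $m(\tau)$ to the operator $T_{m_k}$. The averaging representation above, combined with Kahane's contraction principle, is the heart of the argument, and it is here that both assumptions (on $m$ and on $\tau\partial_\tau m$) are used simultaneously, with the derivative hypothesis absorbing the fluctuation of $m$ across the dyadic annulus while the zeroth-order hypothesis handles the base point. The use of UMD is essential in the final step, since Bourgain's Littlewood-Paley equivalence fails outside the UMD class.
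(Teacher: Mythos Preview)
The paper does not prove this theorem at all: it is explicitly \emph{quoted} as a known result due to Weis \cite{Weis}, so there is no ``paper's own proof'' to compare against. Your sketch is a faithful outline of the standard argument in Weis' original paper --- dyadic decomposition, transfer of $\CR$-boundedness from the symbol to the dyadic multiplier operators via an integral (convex-combination) representation together with Kahane's contraction principle, and assembly via Bourgain's vector-valued Littlewood--Paley square function for UMD spaces --- and is correct at the level of a proof sketch.

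One small caveat: your fundamental-theorem-of-calculus line is slightly informal. In practice one treats positive and negative frequencies separately and writes $m_k$ as an integral average of operators drawn from the given $\CR$-bounded sets $\{m(\tau)\}$ and $\{\tau m'(\tau)\}$; the key structural fact you need is that the strong closure of the absolutely convex hull of an $\CR$-bounded set is again $\CR$-bounded with at most twice the $\CR$-bound, so that Bochner integrals of the type you write inherit the $\CR$-bound. With that clarification your argument goes through and recovers the stated estimate with constant $C_p r_b$.
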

The transference theorem for operator-valued Fourier multipliers obtained in
\cite{EKS1} is stated as follows.
\begin{thm}\label{ESK} Let $X$ and $Y$ be two Banach spaces and $p \in (1, \infty)$.  Assume 
that $Y$ is reflexive.  Let  
$$m \in L_\infty(\BR, \CL(X, Y)) \cap C(\BR, \CL(X, Y)),$$
and let $m|_\BT$ denote the restriction of $m$ on $\BT$. 
We define multipliers on $\BR$ and $\BT$ associated with $m$ by setting 
\begin{align*}
T_{m, \BR}[f](t) = \CF^{-1}[m\CF[f]], \quad T_{m, 
\BT}[f] = \CF^{-1}_\BT[m|_\BT \CF_\BT[f]].
\end{align*}
If $T_{m, \BR} \in \CL(L_p(\BR, X), L_p(\BR, Y))$ possessing 
the estimate:
$$\|T_{m, \BR}[f]\|_{L_p(\BR, Y)} \leq M\|f\|_{L_p(\BR, X)}$$
for any $f \in L_p(\BR, X)$ with some constant $M$, then 
$T_{m, \BT} \in \CL(L_p(\BT, X), L_p(\BT, Y))$ and 
$$\|T_{m, \BT}[f]\|_{L_p(\BT, Y)}
\leq C_pM\|f\|_{L_p(\BT, X)}
$$
for any $f \in L_{p}(\BT, X)$
with some constant $C_p$ depending solely on $p$ and independent of
$M$.
\end{thm}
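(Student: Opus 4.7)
The plan is to adapt de~Leeuw's transference principle to the operator-valued periodic setting, exploiting reflexivity of $Y$ at the final limit passage where it yields the vector-valued duality $L_p(\BR,Y)^* = L_{p'}(\BR,Y^*)$ via the Radon--Nikod\'ym property. By density of trigonometric polynomials in $L_p(\BT,X)$, it suffices to establish the estimate for $f(t) = \sum_{|k|\le N} a_k e^{ikt}$ with $a_k \in X$.

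The first step is to fix a Schwartz function $\phi \in \CS(\BR)$ with $\phi(0)=1$ and compactly supported Fourier transform $\hat\phi \in C_c^\infty(\BR)$, and to introduce the windowed extension $F_\epsilon(t) := \phi(\epsilon t)f(t) \in L_p(\BR,X)$. With the paper's convention $\CF[\phi(\epsilon\,\cdot)](\tau) = \epsilon^{-1}\hat\phi(\tau/\epsilon)$, a direct Fourier computation followed by the change of variable $\tau = k + \epsilon\sigma$ gives
\begin{equation*}
T_{m,\BR}[F_\epsilon](t) = \sum_{|k|\le N} a_k e^{ikt} \int_\BR m(k+\epsilon\sigma)\hat\phi(\sigma) e^{i\epsilon\sigma t}\,d\sigma.
\end{equation*}
Since $\phi(\epsilon t) = \int \hat\phi(\sigma)e^{i\epsilon\sigma t}\,d\sigma$, the remainder $r_\epsilon := T_{m,\BR}[F_\epsilon] - \phi(\epsilon\,\cdot)\,T_{m,\BT}[f]$ is governed by operator-valued kernels $\Psi_k^\epsilon(s) := \CF^{-1}[\hat\phi(m(k+\epsilon\,\cdot) - m(k))](s)$, which satisfy $\sup_{s \in \BR}\|\Psi_k^\epsilon(s)\|_{\CL(X,Y)} \to 0$ as $\epsilon\downarrow 0$, by continuity of $m$ at $k$, compact support of $\hat\phi$, and dominated convergence.

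The quantitative core is an equidistribution identity: for any $\psi \in L_1(\BR)$ and any continuous $Z$-valued $2\pi$-periodic function $h$,
\begin{equation*}
\lim_{\epsilon\to 0}\epsilon\int_\BR \psi(\epsilon t)\|h(t)\|_Z^p\,dt = \frac{1}{2\pi}\Bigl(\int_\BR \psi(s)\,ds\Bigr)\|h\|_{L_p(\BT,Z)}^p.
\end{equation*}
Applied with $\psi = |\phi|^p$ to $h=f$ and $h = T_{m,\BT}[f]$ respectively, this yields $\epsilon^{1/p}\|F_\epsilon\|_{L_p(\BR,X)} \to c_\phi\|f\|_{L_p(\BT,X)}$ and $\epsilon^{1/p}\|\phi(\epsilon\,\cdot)\,T_{m,\BT}[f]\|_{L_p(\BR,Y)} \to c_\phi\|T_{m,\BT}[f]\|_{L_p(\BT,Y)}$ with $c_\phi := \bigl((2\pi)^{-1}\|\phi\|_{L_p}^p\bigr)^{1/p}$. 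Combined with the assumption $\|T_{m,\BR}[F_\epsilon]\|_{L_p(\BR,Y)} \le M\|F_\epsilon\|_{L_p(\BR,X)}$ and the vanishing $\epsilon^{1/p}\|r_\epsilon\|_{L_p(\BR,Y)} \to 0$, the triangle inequality and passage $\epsilon\downarrow 0$ produce $c_\phi\|T_{m,\BT}[f]\|_{L_p(\BT,Y)} \le Mc_\phi\|f\|_{L_p(\BT,X)}$, giving the claim with a universal constant $C_p$.

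The main obstacle is precisely establishing $\epsilon^{1/p}\|r_\epsilon\|_{L_p(\BR,Y)}\to 0$. Although $\Psi_k^\epsilon$ decays uniformly in $s$, it need not be integrable on $\BR$, so pointwise smallness alone does not control an $L_p(\BR,Y)$-norm of intrinsic size $\epsilon^{-1/p}$ coming from the window. This is where reflexivity of $Y$ becomes essential: via the duality $L_p(\BR,Y)^* = L_{p'}(\BR,Y^*)$, it suffices to bound $\epsilon^{1/p}|\langle r_\epsilon, g\rangle|$ uniformly for $g \in L_{p'}(\BR,Y^*)$ with $\|g\|_{p'}\le 1$. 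One approximates $g$ by pieces supported in $|t|\le R_0/\epsilon$ (on which the rescaled kernel $\Psi_k^\epsilon(\epsilon\,\cdot)$ converges uniformly to zero on $|s|\le R_0$), uses the uniform bound $\|\Psi_k^\epsilon\|_{\infty}\le 2\|m\|_\infty\|\hat\phi\|_{L_1}$ together with the $L_{p'}$-smallness of $g$ on the complement to handle the tails, and combines via H\"older and equidistribution. Balancing the uniform decay on bounded scales against the controlled tails, all calibrated against the $\epsilon^{-1/p}$ window normalization, is the technical crux of the proof.
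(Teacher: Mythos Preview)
The paper does not prove this theorem; it is quoted from reference \cite{EKS1} without proof, so there is no in-paper argument to compare against. Your overall strategy via de~Leeuw transference---windowed extension $F_\epsilon = \phi(\epsilon\cdot)f$, the Fourier computation, and the equidistribution limit---is the correct framework, and the identification of the remainder $r_\epsilon$ as the crux is accurate. However, the argument you sketch for $\epsilon^{1/p}\|r_\epsilon\|_{L_p(\BR,Y)}\to 0$ has a genuine gap.

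Your duality argument yields at most $\epsilon^{1/p}r_\epsilon \rightharpoonup 0$ \emph{weakly} in $L_p(\BR,Y)$, not norm convergence. The splitting $g = g\mathbf{1}_{\{|t|\le R_0/\epsilon\}} + g\mathbf{1}_{\{|t|>R_0/\epsilon\}}$ produces estimates that depend on the particular $g$: the tail $\|g\mathbf{1}_{\{|t|>R_0/\epsilon\}}\|_{p'}$ tends to zero for each \emph{fixed} $g\in L_{p'}(\BR,Y^*)$, but not uniformly over the unit ball, so the supremum over $g$ does not collapse. On the tail piece you cannot use the pointwise bound $\|\Psi_k^\epsilon\|_\infty$ alone, since pairing $L_\infty$ against $L_{p'}$ over an unbounded set is uncontrolled; you are forced back to H\"older with $\epsilon^{1/p}\|r_\epsilon\|_p$, which is merely bounded. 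And because $m$ is only continuous, $\Psi_k^\epsilon$ carries no decay in $s$ (integration by parts is unavailable), so there is no dominating function or tightness estimate to close the $L_p$ convergence directly.

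A repair---likely closer to the actual argument in \cite{EKS1}---is to apply duality on the \emph{torus} rather than on the line. Pair $T_{m,\BT}[f]$ against a $Y^*$-valued trigonometric polynomial $g$, introduce a companion window $G_\epsilon = \psi(\epsilon\cdot)g$ with a second Schwartz function $\psi$, and compute $\lim_{\epsilon\to 0}\epsilon\,\langle T_{m,\BR}[F_\epsilon], G_\epsilon\rangle_\BR$ directly: after the substitution $s=\epsilon t$ the diagonal terms converge by dominated convergence to $(\int_\BR\phi\psi)\,\langle T_{m,\BT}[f], g\rangle_\BT$, while the off-diagonal terms carry the phase $e^{i(k+\ell)s/\epsilon}$ and vanish by Riemann--Lebesgue. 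H\"older together with the equidistribution limits for $\epsilon^{1/p}\|F_\epsilon\|_p$ and $\epsilon^{1/p'}\|G_\epsilon\|_{p'}$ then bounds $|\langle T_{m,\BT}[f], g\rangle_\BT|$ by $M\,c_\phi c_\psi\,|\int\phi\psi|^{-1}\|f\|_{L_p(\BT,X)}\|g\|_{L_{p'}(\BT,Y^*)}$, and optimising $\phi,\psi$ (for instance $\psi=\phi^{p-1}$ with $\phi>0$) gives the claim. Reflexivity of $Y$ enters here precisely to identify $(L_p(\BT,Y))^* = L_{p'}(\BT,Y^*)$; no norm convergence of $r_\epsilon$ is ever needed.
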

\begin{remark} In the usual scalar-valued multiplier case, the transference theorem was
proved by de Leeuw \cite{L}, and so this theorem is an extension to the
operator-valued case. 
\end{remark}

We now consider the $\CR$-solver of the generalized resolvent problem:
\begin{equation}\label{4.2}\begin{aligned}
ik\bv -\DV(\mu\bD(\bv)-\fq\bI) = \bff&&\quad&\text{in $B_R$}, \\
\dv \bv =g = \dv\bg&&\quad&\text{in $B_R$}, \\
ik\eta + \CM\eta-(\CA\bv)\cdot\bn=d&&\quad&\text{on $S_R$},\\
(\mu\bD(\bv)-\fq\bI)\bn - (\CB_R\eta)\bn=\bh&&\quad&
\text{on $S_R$}
\end{aligned}\end{equation}
for $k \in \BR$.  From Theorem 4.8 in Shibata \cite{S3} (cf. also Shibata \cite{S1, S2})
we know the following theorem concerned with the existence of an $\CR$-solver
of problem \eqref{4.1}.
\begin{thm}\label{thm:rs.1} Let $1 < q < \infty$ and let $\BR_{k_0}
= \BR\setminus(-k_0, k_0)$.  Let 
\begin{align*}
X_q(B_R) & = \{(\bff, d, \bh, g, \bg) \mid 
\bff \in L_q(B_R)^N, \enskip d \in W^{2-1/q}_q(S_R), \enskip
\bh \in H^1_q(B_R)^N, \enskip g \in H^1_q(B_R), 
\enskip \bg \in L_q(B_R)^N\}, \\
\CX_q(B_R) & = \{F=(F_1, F_2, \ldots, F_7) \mid F_1, F_3, F_7 
\in L_q(B_R)^N, \enskip F_2 \in W^{2-1/q}_q(S_R), \enskip 
F_4 \in H^1_q(B_R)^N, \\
&\phantom{= \{F=(F_1, F_2, \ldots, F_7) \mid F_1, F_3, F_7 
\in L_q(B_R)^N, a}\,\,
 F_5 \in L_q(B_R), \enskip
F_6 \in H^1_q(B_R)\}.
\end{align*}
Then, there exist a constant $k_0 > 0$ and operator families $\CA(ik)$, $\CP(ik)$, and  
$\CH(ik)$ with 
\begin{align*}
\CA(ik) & \in C^1(\BR_{k_0}, \CL(\CX_q(B_R), H^2_q(B_R)^N)), \\
\quad 
\CP(ik) &\in C^1(\BR_{k_0}, \CL(\CX_q(B_R), H^1_q(B_R))), \\
\quad
\CH(ik) &\in C^1(\BR_{k_0}, \CL(\CX_q(B_R), W^{3-1/q}_q(S_R)))
\end{align*}
such that for any $(\bff, d, \bh, g, \bg)$ and $k \in \BR_{k_0}$, 
$\bv = \CA(ik)\CF_k$, $\fq = \CP(ik)\CF_k$ and $\eta
= \CH(ik)\CF_k$, where 
$$\CF_k = (\bff, d, (ik)^{1/2}\bh, \bh, (ik)^{1/2}g, g, ik \bg),$$
are unique solutions of equations \eqref{4.2}, and 
\begin{equation}\label{4.3}\begin{aligned}
\CR_{\CL(\CX_q(B_R), H^{2-m}_q(B_R)^N)}
(\{(k\pd_k)^\ell((ik)^{m/2}\CA(ik))\mid k \in \BR_{k_0}\}) &\leq r_b, \\
\CR_{\CL(\CX_q(B_R), L_q(B_R)^N)}(\{(k\pd_k)^\ell \nabla\CP(ik) \mid 
k \in \BR_{k_0}\}) &\leq r_b, \\
\CR_{\CL(\CX_q(B_R), W^{3-n-1/q}_q(S_R))}
(\{(k\pd_k)^\ell((ik)^n\CH(ik))\mid k \in \BR_{k_0}\}) &\leq r_b
\end{aligned}\end{equation}
for $\ell=0,1$, $m=0,1,2$ and $n=0,1$ with some constant $r_b$. 
\end{thm}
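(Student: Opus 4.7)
The approach is to reduce problem \eqref{4.2} to the classical generalized resolvent problem for the Stokes system with surface tension on the ball, for which an $\CR$-bounded solver is available by Theorem~4.8 of \cite{S3} (cf.\ also \cite{S1, S2, SS1}), and then absorb the non-standard terms $\CM\eta$ and $\frac{1}{|B_R|}\int_{B_R}\bv\,dy\cdot\bn$ as lower-order, finite-rank perturbations via a Neumann series in the high-frequency regime $|k|\ge k_0$.

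First I would invoke Theorem~4.8 of \cite{S3} to obtain operator families $\CA_0(ik)$, $\CP_0(ik)$, $\CH_0(ik)$ satisfying the bounds stated in \eqref{4.3} and delivering the unique solution of the unperturbed version of \eqref{4.2}, obtained by replacing $\CM$ with $0$ and $\CA$ with the identity. Setting
\begin{equation*}
\Pi(\bv,\eta) := -\CM\eta + \tfrac{1}{|B_R|}{\textstyle\int_{B_R}}\bv\,dy\cdot\bn,
\end{equation*}
any solution $(\bv,\fq,\eta)$ of \eqref{4.2} satisfies $(\bv,\fq,\eta) = (\CA_0,\CP_0,\CH_0)(ik)\,\widetilde\CF_k(\bv,\eta)$, where $\widetilde\CF_k$ is obtained from $\CF_k$ by replacing the kinematic slot $d$ with $d + \Pi(\bv,\eta)$. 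Solving \eqref{4.2} is thus equivalent to inverting $I - \CK(ik)$, where $\CK(ik)$ is the bounded linear map extracted from this identity.

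The key observation is that $\Pi$ takes values in the finite-dimensional subspace of $W^{2-1/q}_q(S_R)$ spanned by the constants $\{1, y_1,\ldots, y_N\}$ and the outer normal $\bn$, and is bounded by the ambient norms $\|\bv\|_{L_q(B_R)}$ and $\|\eta\|_{W^{1-1/q}_q(S_R)}$. Combined with the bounds \eqref{4.3} for $\CA_0$ and $\CH_0$, and with the identity $(ik)\eta = d + \bv\cdot\bn$ from the unperturbed kinematic equation (which bounds the ambient norm of $\eta$ by $|k|^{-1}$ times its top-order norm), one gains a decay factor $|k|^{-\alpha}$ for some $\alpha > 0$ in the $\CR$-bound of $\CK(ik)$ via interpolation between the top-order and ambient spaces. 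Therefore the $\CR$-bound of $\CK(ik)$ tends to zero as $|k|\to\infty$; choosing $k_0$ large enough so that it is at most $1/2$ uniformly on $\BR_{k_0}$, the Neumann series $(I - \CK(ik))^{-1} = \sum_{j\ge 0}\CK(ik)^j$ converges, and $\CR$-boundedness propagates through sums, products and compositions of $\CR$-bounded families. Composing with $(\CA_0,\CP_0,\CH_0)(ik)$ yields the asserted operator families $\CA(ik)$, $\CP(ik)$, $\CH(ik)$ together with the $\ell=0$ bounds in \eqref{4.3}.

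The main technical obstacle is propagating the $\ell=1$ bounds and the $C^1$-dependence on $k$ through the Neumann series: differentiating $\CK(ik)$ in $k$ must preserve the $|k|^{-\alpha}$ decay, which requires careful use of the $\ell=1$ bounds in \eqref{4.3} for the unperturbed solver together with the finite-rank structure of $\Pi$ and the explicit $k$-dependence inherited from $(ik)\eta$ in the kinematic equation. Uniqueness of solutions follows from the same contraction argument applied to the homogeneous problem.
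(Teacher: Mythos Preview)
The paper does not give its own proof of this theorem: it is quoted directly as a consequence of Theorem~4.8 in \cite{S3} (with references to \cite{S1,S2}), so there is no in-paper argument to compare against. Your perturbation strategy---start from the standard $\CR$-solver for the Stokes problem with surface tension (kinematic equation $ik\eta-\bv\cdot\bn=d$) and absorb the finite-rank terms $\CM\eta$ and $|B_R|^{-1}\!\int_{B_R}\bv\,dy\cdot\bn$ by a Neumann series for large $|k|$---is exactly the mechanism the paper itself invokes elsewhere (see the opening of the proof of Theorem~\ref{thm:s1}, where these same terms are dismissed as ``lower order perturbations, choosing $k_0>0$ large enough''). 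So your route is not different from the paper's intent; it simply spells out what the citation packages.

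On correctness: your decay estimate is sound. From the $m=2$ and $n=1$ cases of \eqref{4.3} for the unperturbed solver one gets $\CR$-bounds on $(ik)\CA_0(ik)$ in $L_q$ and $(ik)\CH_0(ik)$ in $W^{2-1/q}_q$, and since $\Pi$ is a fixed bounded map into a finite-dimensional subspace of $W^{2-1/q}_q(S_R)$ controlled by $\|\bv\|_{L_q}+\|\eta\|_{W^{2-1/q}_q}$, Lemma~\ref{lem:rp.2}~(b),(c) gives $\CR_{\CL(\CX_q)}(\{\CK(ik)\mid k\in\BR_{k_0}\})\le Ck_0^{-1}$. The $\ell=1$ bounds propagate the same way because $(k\pd_k)(ik)^{-1}=-(ik)^{-1}$ retains the $k_0^{-1}$ decay, and differentiating the Neumann series yields $(k\pd_k)(I-\CK)^{-1}=(I-\CK)^{-1}(k\pd_k\CK)(I-\CK)^{-1}$, each factor $\CR$-bounded. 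One small imprecision: you do not need interpolation or an unspecified exponent $\alpha$; the decay is exactly $|k|^{-1}$ and comes straight from the $m=2$, $n=1$ bounds, which makes the argument cleaner than you state.
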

\begin{remark} 
\begin{itemize}
\item[\thetag1]
Here and in the following, for $\theta\in(0,1)$ we set 
$$(ik)^\theta = \begin{cases} e^{i\pi\theta/2}|
k|^\theta &\quad\text{for $k > 0$}, \\
e^{-i\pi\theta/2}|
k|^\theta &\quad\text{for $k < 0$}.
\end{cases}
$$
\item[\thetag2]
The functions $F_1$, $F_2$, $F_3$, $F_4$, $F_5$, $F_6$, and $F_7$ are 
variables corresponding to $\bff$, $d$, $(ik)^{1/2}\bh$, $\bh$, $(ik)^{1/2}g$,
$g$, and $ik\, \bg$, respectively. 
\item[\thetag3]
We define the norm $\|\cdot\|_{\CX_q(B_R)}$ by setting
$$\|(F_1, \ldots, F_7)\|_{\CX_q(B_R)}
= \|(F_1, F_3, F_5, F_7)\|_{L_q(B_R)} 
+ \|F_2\|_{W^{2-1/q}_q(S_R)} + \|(F_4, F_6)\|_{H^1_q(B_R)}.
$$
\end{itemize}
\end{remark}
Let $\varphi(ik)$ be a function in $C^\infty(\BR)$ which equals one
for $k \in \BR_{k_0+2}$ and zero for $k \not \in \BR_{k_0+1}$, and 
let $\psi(ik)$ be a  function in $C^\infty(\BR)$ which equals one
for $k \in \BR_{k_0+4}$ and zero for $k \not \in \BR_{k_0+3}$.  Notice that
$\varphi(ik)\psi(ik) = \varphi(ik)$. Let $\CA(ik)$, $\CP(ik)$ and $\CH(ik)$ be 
the $\CR$-solvers given in Theorem \ref{thm:rs.1}. Then we have
\begin{equation}\label{4.4}\begin{aligned}
\CR_{\CL(\CX_q(B_R), H^{2-m}_q(B_R)^N)}
(\{(k\pd_k)^\ell((ik)^{m/2}(\varphi(ik)\CA(ik)))\mid k \in \BR_{k_0}\}) &\leq C\|\varphi\|_{H^1_\infty(\BR)}r_b, \\
\CR_{\CL(\CX_q(B_R), L_q(B_R)^N)}(\{(k\pd_k)^\ell \nabla(\varphi(ik)\CP(ik))\mid 
k \in \BR_{k_0}\}) &\leq C\|\varphi\|_{H^1_\infty(\BR)}r_b, \\
\CR_{\CL(\CX_q(B_R), W^{3-n-1/q}_q(S_R))}
(\{(k\pd_k)^\ell((ik)^n(\varphi(ik)\CH(ik)))\mid k \in \BR_{k_0}\}) &\leq C\|\varphi\|_{H^1_\infty(\BR)}r_b
\end{aligned}\end{equation}
for $\ell=0,1$, $m=0,1,2$ and $n=0,1$.  To prove \eqref{4.4}, we use the following
lemma concerning the fundamental properties of the $\CR$-bound and scalar-valued Fourier multipliers. 
\begin{lem}\label{lem:rp.2}
\thetag{a} Let $X$ and $Y$ be Banach spaces,
and let $\CT$ and $\CS$ be
$\CR$-bounded families in $\CL(X,Y)$.
Then, $\CT+ \CS = \{T + S \mid
T \in \CT, \enskip S \in \CS\}$ is also an $\CR$-bounded
family in
$\CL(X,Y)$ and
$$\CR_{\CL(X,Y)}(\CT + \CS) \leq
\CR_{\CL(X,Y)}(\CT) + \CR_{\CL(X,Y)}(\CS).$$

\thetag{b} Let $X$, $Y$ and $Z$ be Banach spaces,  and let
$\CT$ and $\CS$ be $\CR$-bounded families in $\CL(X, Y)$ and
$\CL(Y, Z)$, respectively.  Then, $\CS\CT = \{ST \mid
T \in \CT, \enskip S \in \CS\}$ is also an $\CR$-bounded
family in $\CL(X, Z)$ and
$$\CR_{\CL(X, Z)}(\CS\CT) \leq \CR_{\CL(X,Y)}(\CT)\CR_{\CL(Y, Z)}(\CS).$$

\thetag{c} Let $1 < p, \, q < \infty$ and let $D$ be a domain in $\BR^N$.
Let $m=m(\lambda)$ be a bounded function defined on a subset
$U$ of $\BC$ and let $M_m(\lambda)$ be a map defined by
$M_m(\lambda)f = m(\lambda)f$ for any $f \in L_q(D)$.  Then,
$\CR_{\CL(L_q(D))}(\{M_m(\lambda) \mid \lambda \in U\})
\leq C_{N,q,D}\|m\|_{L_\infty(U)}$.

\thetag{d} Let $n=n(\tau)$ be a $C^1$-function defined on $\BR\setminus\{0\}$
that satisfies the conditions $|n(\tau)| \leq \gamma$
and $|\tau n'(\tau)| \leq \gamma$ with some constant $c > 0$ for any
$\tau \in \BR\setminus\{0\}$.  Let $T_n$ be an
 operator-valued Fourier multiplier defined by $T_n f = \CF^{-1}[n \CF[f]]$
for any $f$ with $\CF[f] \in \CD(\BR, L_q(D))$.  Then,
$T_n$ is extended to a bounded linear operator
from $L_p(\BR, L_q(D))$ into itself.  Moreover,
denoting this extension also by $T_n$, we have
$$\|T_n\|_{\CL(L_p(\BR, L_q(D)))} \leq C_{p,q,D}\gamma.
$$
\end{lem}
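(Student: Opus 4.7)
Parts \thetag{a}--\thetag{c} are purely linear-algebraic consequences of the definition of $\CR$-boundedness together with Khintchine's inequality, while \thetag{d} will be reduced to Weis' operator-valued multiplier theorem (Theorem \ref{Weis}) applied in the UMD space $L_q(D)$. No genuinely new computation is required beyond checking hypotheses; the plan is to treat \thetag{a}, \thetag{b} first as formal manipulations, use a pointwise square-function argument for \thetag{c}, and then bootstrap \thetag{c} into \thetag{d}.

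For \thetag{a}, given finite families $\{T_k\}_{k=1}^n\subset\CT$, $\{S_k\}_{k=1}^n\subset\CS$, and $\{f_k\}_{k=1}^n\subset X$, the triangle inequality in $L_p((0,1),Y)$ splits $\sum_k r_k(T_k+S_k)f_k$ into two Rademacher sums, and applying the defining bounds of $\CR_{\CL(X,Y)}(\CT)$ and $\CR_{\CL(X,Y)}(\CS)$ to each yields the claim. For \thetag{b}, set $g_k:=T_kf_k\in Y$; the $\CR$-bound of $\CS$ applied to $(g_k)$ gives
\[
\Bigl\|\sum_k r_kS_kT_kf_k\Bigr\|_{L_p((0,1),Z)}\le\CR_{\CL(Y,Z)}(\CS)\Bigl\|\sum_k r_kT_kf_k\Bigr\|_{L_p((0,1),Y)},
\]
and the $\CR$-bound of $\CT$ applied to $(f_k)$ then bounds the right-hand side by $\CR_{\CL(Y,Z)}(\CS)\CR_{\CL(X,Y)}(\CT)\|\sum_k r_kf_k\|_{L_p((0,1),X)}$.

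For \thetag{c}, fix $\{\lambda_k\}_{k=1}^n\subset U$ and $\{f_k\}_{k=1}^n\subset L_q(D)$. Kahane's inequality permits replacing the outer $L_p((0,1))$-norm by an $L_q((0,1))$-norm up to a constant $C_{p,q}$; by Fubini and the pointwise scalar Khintchine inequality,
\[
\Bigl\|\sum_k r_k M_m(\lambda_k)f_k\Bigr\|_{L_p((0,1),L_q(D))}\le C_{p,q}\Bigl\|\Bigl(\sum_k|m(\lambda_k)|^2|f_k|^2\Bigr)^{1/2}\Bigr\|_{L_q(D)}.
\]
Pulling $\|m\|_{L_\infty(U)}$ out of the square function and applying the reverse Kahane--Khintchine equivalence to recover $\|\sum_k r_kf_k\|_{L_p((0,1),L_q(D))}$ on the right produces $\CR_{\CL(L_q(D))}(\{M_m(\lambda):\lambda\in U\})\le C_{p,q,D}\|m\|_{L_\infty(U)}$, as required.

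For \thetag{d}, I apply Theorem \ref{Weis} with $X=Y=L_q(D)$, which is UMD for $1<q<\infty$, to the symbol $m(\tau):=n(\tau)\,\mathrm{id}_{L_q(D)}$. Since $m(\tau)$ and $\tau m'(\tau)=(\tau n'(\tau))\,\mathrm{id}_{L_q(D)}$ are scalar multiples of the identity, part \thetag{c} applied with the single-point parameter set $U=\BR\setminus\{0\}$ shows that both families $\{m(\tau)\}$ and $\{\tau m'(\tau)\}$ are $\CR$-bounded in $\CL(L_q(D))$ with bounds controlled by $\sup_{\tau}|n(\tau)|\le\gamma$ and $\sup_{\tau}|\tau n'(\tau)|\le\gamma$, respectively. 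The hypothesis of Weis' theorem is therefore met with $r_b\le C_{q,D}\gamma$, yielding $\|T_n f\|_{L_p(\BR,L_q(D))}\le C_{p,q,D}\gamma\|f\|_{L_p(\BR,L_q(D))}$ on the dense subclass where $\CF[f]\in\CD(\BR,L_q(D))$; a standard density argument extends $T_n$ to all of $L_p(\BR,L_q(D))$. The only conceptually non-trivial step in the whole lemma is invoking UMD-ness of $L_q(D)$ to access Weis' theorem in \thetag{d}; once that is in place, the Mikhlin-type scalar hypotheses on $n$ feed directly into the machinery via \thetag{c}.
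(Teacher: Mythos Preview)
Your proof is correct and follows the standard arguments one finds in the literature on $\CR$-boundedness (e.g., Denk--Hieber--Pr\"uss or Kunstmann--Weis). Note, however, that the paper does not actually prove this lemma: it is stated as a known collection of facts and then immediately \emph{applied} (the passage following the lemma uses \thetag{c} to verify the $\CR$-boundedness of $\varphi(ik)ik\CA(ik)$, rather than proving the lemma itself). So there is no ``paper's own proof'' to compare against; your write-up supplies precisely what the paper takes for granted.

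One small remark on constants: in \thetag{c} the paper records the constant as $C_{N,q,D}$, whereas your Kahane--Khintchine argument produces a constant depending only on $p$ and $q$ (and by Kahane, ultimately only on $q$). The dependence on $N$ and $D$ in the paper's notation is spurious for this particular statement, so your sharper tracking of constants is fine.
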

Here, we only prove the $\CR$-boundedness of $\varphi(ik)ik\CA(ik)$.  
The $\CR$-boundedness of the other terms can be proved by the same
argument.  Let $n \in \BN$, $\{k_\ell\}_{\ell=1}^n \in \BR^n$, 
$\{F_\ell\}_{\ell=1}^n \in \CX_q(B_R)^n$.  Changing the labeling of indices  if necessary, 
we may assume that $\varphi(k_\ell)\not=0$ for $k = 1, \ldots, m$ and 
$\varphi(k_\ell) = 0$ for $\ell=m+1,\ldots, n$. 
And then,  using Lemma \ref{lem:rp.2},
we have
\begin{align*}
&\|\sum_{\ell=1}^n r_\ell\varphi(ik_\ell)(ik_\ell)\CA(ik_\ell)F_\ell
\|_{L_q((0, 1), L_q(B_R))} \\
&\quad = 
\|\sum_{\ell=1}^m r_\ell\varphi(ik_\ell)(ik_\ell)\CA(ik_\ell)F_\ell
\|_{L_q((0, 1), L_q(B_R))} \\
&\quad 
\leq r_b\|\sum_{\ell=1}^m r_\ell\varphi(ik_\ell)F_\ell
\|_{L_q((0, 1), L_q(B_R))} \\
&\quad =
 r_b\|\sum_{\ell=1}^n r_\ell\varphi(ik_\ell)F_\ell
\|_{L_q((0, 1), L_q(B_R))} 
\\
&\quad\leq
C_{q,R}
\|\varphi\|_{H^1_\infty(B_R)}r_b\|\sum_{\ell=1}^n r_\ell F_\ell
\|_{L_q((0, 1), L_q(B_R))},
\end{align*}
which shows that 
$$\CR_{\CL(\CX_q(B_R), L_q(B_R)^N)}
(\{ik\varphi(ik)\CA(ik) \mid k \in \BR_{k_0}\}) \leq C_{q,R}\|\varphi\|_{H^1_\infty(\BR)}r_b.
$$
For $f \in \{\bF, G, \bG,  D, \bH\}$, let 
$$f_\psi = \CF^{-1}_\BT[\psi \CF_\BT[f]].$$
We consider the high frequency part of the equations \eqref{4.1}:
\begin{equation}\label{4.5}\begin{aligned}
\pd_t\bu_\psi -\DV(\mu\bD(\bu_\psi)-\fp_\psi\bI) = \bF_\psi&&\quad&\text{in $B_R\times(0, 2\pi)$}, \\
\dv \bu_\psi =G_\psi = \dv\bG_\psi&&\quad&\text{in $B_R\times(0, 2\pi)$}, \\
\pd_t\rho_\psi + \CM\rho_\psi-(\CA\bu_\psi)\cdot\bn=D_\psi &&\quad&\text{on $S_R\times(0, 2\pi)$},\\
(\mu\bD(\bu_\psi)-\fp_\psi\bI)\bn - (\CB_R\rho_\psi)\bn=\bH_\psi &&\quad&
\text{on $S_R\times(0, 2\pi)$}.
\end{aligned}\end{equation}
By Theorem \ref{Weis}, Theorem \ref{ESK}, and \eqref{4.4},  we have immediately the
following theorem. 
\begin{thm}\label{thm:rs.2} Let $1 < p, q < \infty$.  Then, for any functions $\bF$, $G$, $\bG$, 
$D$, and $\bH$ with 
\begin{align*}
\bF & \in L_{p, {\rm per}}((0, 2\pi), L_q(B_R)^N), \quad 
D \in L_{p, {\rm per}}((0, 2\pi), W^{2-1/q}_q(B_R)), \quad \\
\bH &\in H^{1/2}_{p, {\rm per}}((0, 2\pi), L_q(B_R)^N) \cap 
L_{p, {\rm per}}((0, 2\pi), H^1_q(B_R)^N), \\
G & \in H^{1/2}_{p, {\rm per}}((0, 2\pi), L_q(B_R)) \cap 
L_{p, {\rm per}}((0, 2\pi), H^1_q(B_R)), \quad \bG \in H^1_{p, {\rm per}}((0, 2\pi), L_q(B_R)^N), 
\end{align*}
We let 
\begin{align*}
\bu_\psi & = \CF^{-1}_\BT[\varphi(ik)\CA(ik)\CF_k(\bF_\psi, D_\psi, \bH_\psi, G_\psi, \bG_\psi)](\cdot, t),
\\
\fp_\psi & = \CF^{-1}_\BT[\varphi(ik)\CP(ik)\CF_k(\bF_\psi, D_\psi, \bH_\psi, G_\psi, \bG_\psi)](\cdot, t),
\\
\rho_\psi & = \CF^{-1}_\BT[\varphi(ik)\CA(ik)\CF_k(\bF_\psi, D_\psi, \bH_\psi, G_\psi, \bG_\psi)](\cdot, t),
\end{align*}
where we have set 
\begin{align*}
\CF_k(\bF_\psi, D_\pi, \bH_\psi, G_\psi, \bG_\psi)
= &\psi(ik)(\CF_\BT[\bF](ik), \CF_\BT[D](ik), (ik)^{1/2}\CF_\BT[\bH](ik), 
\CF_\BT[\bH](ik), \\
&\quad (ik)^{1/2}\CF_\BT[G](ik), \CF_\BT[G](ik), ik\CF_\BT[\bG](ik)).
\end{align*}
Then, $\bu_\psi$, $\fp_\psi$ and $\rho_\psi$ are the unique solutions of  equations \eqref{4.5}, 
which possess the following estimate:
\begin{align*}
&\|\bu_\psi\|_{L_p((0, 2\pi), H^2_q(B_R))} + 
\|\pd_t\bu_\psi\|_{L_p((0, 2\pi), L_q(B_R))}  + 
\|\nabla \fp_\psi \|_{L_p((0, 2\pi), L_q(B_R))} \\
 &\hskip0.5cm+ \|\rho_\psi\|_{L_p((0, 2\pi), W^{3-1/q}_q(S_R))} + 
\|\pd_t\rho_\psi\|_{H^1_p((0, 2\pi), W^{2-1/q}_q(S_R))}\\
&\hskip1cm\leq C\{\|\bF_\psi\|_{L_p((0, 2\pi), L_q(B_R))} 
+ \|D_\psi\|_{L_p((0, 2\pi), W^{2-1/q}_q(S_R))} 
+ \|\Lambda^{1/2}(G_\psi, \bH_\psi)\|_{L_p((0, 2\pi), L_q(B_R))} \\
&\hskip1.5cm+ \|(G_\psi, \bH_\psi)\|_{L_p((0, 2\pi), H^1_q(B_R))}
+ \|\pd_t\bG_\psi\|_{L_p((0, 2\pi), L_q(B_R))}\}
\end{align*}
for some constant $C > 0$. Here, we have set
$$\Lambda^{1/2}(G_\psi, \bH_\psi) = \CF^{-1}_\BT[(ik)^{1/2}\psi(ik)(\CF_\BT[G](ik), \CF_\BT[\bH]
(ik))].$$
\end{thm}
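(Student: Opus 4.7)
The strategy is to view the proposed formulas for $\bu_\psi$, $\fp_\psi$, $\rho_\psi$ as operator-valued Fourier multipliers on the torus $\BT$ whose symbols factor through the cut-off $\CR$-solvers from \eqref{4.4}, and to establish the stated $L_p$--$L_q$ maximal regularity by first obtaining $L_p(\BR,\cdot)$ bounds via Weis's theorem (Theorem \ref{Weis}) on the real line, and then transferring these to $L_p(\BT,\cdot)$ via Theorem \ref{ESK}.

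Concretely, I would identify, for each output norm appearing on the left-hand side of \eqref{main:est1}, the relevant operator symbol on $\BR$: namely $\varphi(ik)\CA(ik)$ (for $\bu_\psi$ in $H^2_q(B_R)$), $(ik)\varphi(ik)\CA(ik)$ (for $\pd_t\bu_\psi$ in $L_q(B_R)$), $\nabla(\varphi(ik)\CP(ik))$ (for $\nabla\fp_\psi$), $\varphi(ik)\CH(ik)$ (for $\rho_\psi$ in $W^{3-1/q}_q(S_R)$), and $(ik)\varphi(ik)\CH(ik)$ (for $\pd_t\rho_\psi$ in $W^{2-1/q}_q(S_R)$). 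Each of these symbols is defined on all of $\BR$ (extended by zero in a neighborhood of $0$ thanks to the support condition on $\varphi$), belongs to $C^1(\BR\setminus\{0\},\CL(\CX_q(B_R),Y))$, and the bounds \eqref{4.4} together with Lemma \ref{lem:rp.2} provide $\CR$-boundedness of both the symbol and its multiplicative derivative $\tau\pd_\tau$ by a uniform constant. Weis's theorem then yields $L_p(\BR,\CX_q(B_R))\to L_p(\BR,Y)$ boundedness of each associated multiplier operator.

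With the real-line bounds in hand, Theorem \ref{ESK} applies because every target space $Y$ (a Lebesgue or Sobolev space on the bounded manifold $B_R$ or $S_R$) is reflexive for $1<q<\infty$ and each symbol is continuous on $\BR$; this delivers the corresponding $L_p(\BT,\cdot)$ bound. Computing the $L_p(\BT,\CX_q(B_R))$ norm of the input vector $\CF_k(\bF_\psi,D_\psi,\bH_\psi,G_\psi,\bG_\psi)$ slot by slot produces exactly the right-hand side of \eqref{main:est1}: the $(ik)^{1/2}$-factors in the third and fifth slots contribute the $\Lambda^{1/2}$-norm of $(G_\psi,\bH_\psi)$, the $ik$-factor in the seventh slot contributes $\|\pd_t\bG_\psi\|_{L_p((0,2\pi),L_q(B_R))}$, and the remaining slots contribute the pointwise $L_p$ norms.

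Finally, that these multipliers produce genuine solutions of \eqref{4.5} and the uniqueness assertion must be verified mode-by-mode using the identity $\varphi(ik)\psi(ik)=\varphi(ik)$. For each $k$ in the support of $\varphi$, Theorem \ref{thm:rs.1} guarantees that $(\varphi(ik)\CA(ik),\varphi(ik)\CP(ik),\varphi(ik)\CH(ik))\CF_k$ is the $k$-th Fourier coefficient of a solution of \eqref{4.2} whose right-hand side recovers the $k$-th Fourier coefficient of $(\bF_\psi,D_\psi,\bH_\psi,G_\psi,\bG_\psi)$; outside this support both sides vanish thanks to $\psi(ik)=0$, and the uniqueness follows from the uniqueness statement in Theorem \ref{thm:rs.1} on high frequencies. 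I expect the main obstacle to be conceptually minor but notationally delicate: carefully matching the exponents of $ik$ in each slot of $\CF_k$ to the correct function spaces on the right-hand side of \eqref{main:est1}, so that e.g.\ the $H^{1/2}_{p,{\rm per}}$-smoothness of $(G,\bH)$ and the $H^1_{p,{\rm per}}$-smoothness of $\bG$ enter precisely where, and only where, the $\CR$-solver demands them.
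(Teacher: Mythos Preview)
Your proposal is correct and follows precisely the paper's approach: the paper states that Theorem \ref{thm:rs.2} follows ``immediately'' from Theorem \ref{Weis}, Theorem \ref{ESK}, and \eqref{4.4}, and your write-up is exactly the unpacking of that sentence. One minor notational point: the identity you invoke should read $\varphi(ik)\psi(ik)=\psi(ik)$ (since $\varphi\equiv 1$ on $\mathrm{supp}\,\psi$), which is what is actually needed to verify mode-by-mode that the constructed triple solves \eqref{4.5}; the paper's statement $\varphi\psi=\varphi$ appears to be a typo, but your argument in the final paragraph is the right one.
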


We now consider the lower frequency part of solutions of equations \eqref{4.1}. 
Namely, we consider equations \eqref{4.2} for $k \in \BR$ with $1 \leq |k| < k_0+4$.
We shall show the following theorem.
\begin{thm}\label{thm:rs.3} Let $1 < q < \infty$ and $k \in \BZ$ with 
$1 \leq |k| \leq k_0+3$. Then, for any 
$\bff \in L_q(B_R)^N$, $g \in H^1_q(B_R)$, $d \in W^{2-1/q}_q(S_R)$, 
$\bh \in H^1_q(B_R)^N$, and $\bg \in L_q(B_R)^N$, problem \eqref{4.2}
admits unique solutions $\bv \in H^2_q(B_R)^N$, 
$\fq \in H^1_q(B_R)$, and $\eta \in W^{3-1/q}_q(S_R)$ possessing
the estimate:
\begin{equation}\label{4.6}\begin{aligned}
&\|\bv\|_{H^2_q(B_R)} + \|\nabla \fq\|_{L_q(B_R)}
+ \|\eta\|_{W^{3-1/q}_q(S_R)} \\
&\quad 
\leq C(\|\bff\|_{L_q(B_R)} + \|d\|_{W^{2-1/q}_q(S_R)}
+ \|(g, \bh)\|_{H^1_q(B_R)} + \|\bg\|_{L_q(B_R)})
\end{aligned}\end{equation}
for some constant $C > 0$. 
\end{thm}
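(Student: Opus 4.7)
The plan is to prove uniqueness first and then deduce existence together with the estimate \eqref{4.6} by a Fredholm-alternative argument, exploiting the compactness of Sobolev embeddings on the bounded domain $B_R$.

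For uniqueness, suppose $(\bv,\fq,\eta)$ solves \eqref{4.2} with zero data and $k\in\BZ\setminus\{0\}$. I would test the momentum equation against each rigid motion $\bp_\ell$ and integrate by parts. Since $\bD(\bv)-\fq\bI$ is symmetric, since $(\mu\bD(\bv)-\fq\bI)\bn=(\CB_R\eta)\bn$ on $S_R$, and since $\bp_\ell\cdot\bn|_{S_R}$ is either the first-order spherical harmonic $y_j/R\in\ker\CB_R$ (translations) or identically zero (rotations), this yields $ik(\bv,\bp_\ell)_{B_R}=0$, hence $(\bv,\bp_\ell)_{B_R}=0$ for all $\ell$. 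In particular $c:=|B_R|^{-1}\int_{B_R}\bv\,dy=0$, so the kinematic condition reduces to $(ik+\CM)\eta=\bv\cdot\bn$ on $S_R$. Integrating against $1$ and using $\int_{S_R}\bv\cdot\bn\,d\sigma=\int_{B_R}\dv\bv\,dy=0$ together with $\CM(1)=|S_R|$, I obtain that the constant Fourier coefficient $\eta_0$ of $\eta$ vanishes.

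Next I test the momentum equation with $\bv$ itself to derive the energy identity
\begin{equation*}
ik\|\bv\|^2_{L_2(B_R)}+\tfrac{\mu}{2}\|\bD(\bv)\|^2_{L_2(B_R)}=(\CB_R\eta,\bv\cdot\bn)_{S_R}=(\CB_R\eta,(ik+\CM)\eta)_{S_R}.
\end{equation*}
Because $\CM\eta$ lies in $\mathrm{span}\{1,y_1,\dots,y_N\}$ and $\CB_R(y_j/R)=0$, self-adjointness of $\CB_R$ reduces $(\CB_R\eta,\CM\eta)_{S_R}$ to a positive multiple of $|\eta_0|^2=0$. Taking real parts gives $\bD(\bv)=0$, so $\bv$ is a rigid motion; combined with its $L_2$-orthogonality to $\mathrm{span}\{\bp_\ell\}$ this forces $\bv\equiv 0$. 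The kinematic equation then becomes $(ik+\CM)\eta=0$, and since $\CM$ has eigenvalues $|S_R|$, a positive $\beta R$, and $0$ on constants, first-order harmonics, and higher harmonics respectively, each of $ik+|S_R|,\,ik+\beta R,\,ik$ is nonzero for $k\in\BZ\setminus\{0\}$; hence $\eta=0$, and the residual momentum equation and boundary condition give $\nabla\fq=0$ and $\fq\bn=0$, so $\fq=0$.

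For existence and the estimate, I reformulate \eqref{4.2} as $(ik-\mathcal{A})U=F$ for an unbounded closed operator $\mathcal{A}$ on a product $L_q$-type space, with $\fq$ eliminated by solving the associated weak pressure problem. Theorem \ref{thm:rs.1} shows that the resolvent set of $\mathcal{A}$ contains $\{ik:|k|\geq k_0\}$, and compactness of $H^2_q(B_R)\hookrightarrow L_q(B_R)$ and $W^{3-1/q}_q(S_R)\hookrightarrow W^{2-1/q}_q(S_R)$ renders the resolvent compact, so $\mathcal{A}$ has purely discrete spectrum. The uniqueness step above excludes $ik\in\sigma(\mathcal{A})$ for integer $k$ with $1\leq|k|\leq k_0+3$, so $(ik-\mathcal{A})^{-1}$ is bounded on the data space, giving both existence and the estimate \eqref{4.6} by the closed graph theorem. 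The main obstacle lies in the uniqueness argument: since $\CM$ and $\CA$ are precisely engineered to cancel the zero eigenvalues of $\CB_R$ on $\mathrm{span}\{y_j/R\}$ through the coupling of the bulk and surface equations, one must simultaneously track the projections of $\bv\cdot\bn$ and $\eta$ onto spherical harmonics of degrees $0$, $1$, and $\ge 2$ and carefully exploit the orthogonality of $\bv$ to the rigid motion space.
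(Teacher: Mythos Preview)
Your uniqueness argument for $q\ge 2$ is correct and takes a genuinely different route from the paper's. The paper begins by integrating the kinematic equation against $1$ and the $x_j$ to obtain $(\eta,1)_{S_R}=(\eta,x_j)_{S_R}=0$, and only then passes to the energy identity; you instead test the momentum equation against the rigid motions $\bp_\ell$ to obtain $(\bv,\bp_\ell)_{B_R}=0$ first. Your ordering pays off: once the real part of the energy identity gives $\bD(\bv)=0$, orthogonality to the rigid space yields $\bv=0$ immediately, whereas the paper has to do further work (via $\nabla K=0$ and the boundary condition) to pass from $\bD(\bw)=0$ to $\bw=0$. Your conclusion $\eta=0$ via the spectral decomposition of $ik+\CM$ is likewise cleaner than the paper's route through $\CB_R\eta=0$ plus orthogonality. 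The Fredholm-alternative skeleton for existence is essentially the same in both approaches.

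There is, however, a genuine gap: your energy identity requires $\bv\in H^1_2(B_R)$, which follows from $\bv\in H^2_q(B_R)$ only when $q\ge 2$ (on a bounded domain). For $1<q<2$ you have not proved uniqueness, and the Fredholm argument in the $L_q$ setting needs it. The paper closes this by a duality argument: given a null solution $(\bw,\zeta)$ in $\CD_q$, one solves the adjoint problem $-ikV-\bA V=(\bff,0)$ in $\CD_{q'}$ with $q'>2$ (where existence is already established), and an integration-by-parts computation yields $(\bw,\bff)_{B_R}=0$ for every $\bff\in J_{q'}(B_R)$, hence $\bw=0$. You should either carry out this duality step or argue by elliptic bootstrap that a null solution in $H^2_q$ is automatically smooth and hence lies in $H^2_2$. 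A second, more minor, point: before you can write \eqref{4.2} as $(ik-\mathcal A)U=F$ with $\mathcal A$ acting on solenoidal fields, you must first absorb the inhomogeneous data $g$, $\bg$, $\bh$; the paper does this by solving \eqref{4.2} at the known resolvent point $k_0+4$ (Theorem~\ref{thm:rs.1}) and passing to the difference, which then has $g=\bg=\bh=0$.
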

\begin{proof} From Theorem \ref{thm:rs.1}, problem
\eqref{4.2} with $k = k_0+4$ admits  unique solutions
$\bv_{k_0} \in H^2_q(B_R)^N$, $\fq_{k_0} \in H^1_q(B_R)$, 
and $\eta_{k_0} \in W^{3-1/q}_q(S_R)$ possessing the estimate:
\begin{equation}\label{4.7}\begin{aligned}
&\|\bv_{k_0}\|_{H^2_q(B_R)} + \|\nabla \fq_{k_0}\|_{L_q(B_R)}
+ \|\eta_{k_0}\|_{W^{3-1/q}_q(S_R)} \\
&\quad 
\leq C(\|\bff\|_{L_q(B_R)} + \|d\|_{W^{2-1/q}_q(S_R)}
+ \|(g, \bh)\|_{H^1_q(B_R)} + \|\bg\|_{L_q(B_R)})
\end{aligned}\end{equation}
for some constant $C$.  Thus, for any $k \in \BR$ with $|k| < k_0+4$, we consider the
unique solvability of the equations: 
\begin{equation}\label{4.8}\begin{aligned}
ik\bw - \DV(\mu\bD(\bw) - \fr\bI) = \bff,
\quad \dv\bw = 0&&\quad&\text{in $B_R$}, \\
ik\zeta + \CM\zeta-(\CA\bw)\cdot\bn= d&&\quad&\text{on $S_R$}, \\
(\mu\bD(\bw) - \fr\bI)\bn - \sigma(\CB_R\zeta)\bn= 0&&\quad
&\text{on $S_R$},
\end{aligned}\end{equation}
where we have set $\bff = i(k-k_0)\bv_{k_0}$ and 
$d=i(k_0-k)\eta_{k_0}$.  In fact, if we set $\bv = \bv_{k_0}+\bw$, 
$\fq=\fq_{k_0}+\fr$, and $\eta = \eta_{k_0}+ \zeta$,
then $\bv$, $\fq$ and $\eta$ are unique solutions of equations \eqref{4.2}. 

In what follows, we study the unique solvability of equations
\eqref{4.8} in the case where $\bff \in L_q(B_R)$ and 
$d \in W^{2-1/q}_q(S_R)$ are arbitrary. To solve \eqref{4.8}, 
it is convenient to study the functional analytic form of
\eqref{4.8}, and so we eliminate the pressure term
$\fr$ and the divergence condition $\dv\bw=0$ in $B_R$. 
Given $\bv \in H^2_q(B_R)^N$ and $\zeta \in W^{3-1/q}_q(S_R)$, let 
$K=K(\bv, \zeta) \in H^1_q(B_R)$ be the unique solution of the weak Dirichlet problem:
\begin{equation}\label{4.10}
(\nabla K, \nabla\varphi)_{B_R} = (\DV(\mu\bD(\bv))-\nabla\dv\bv, \nabla\varphi)_{B_R}
\quad\text{for any $\varphi \in \hat H^1_{q',0}(B_R)$}
\end{equation}
subject to 
\begin{equation}\label{4.10*}
K = <\mu\bD(\bv)\bn, \bn> - \sigma\CB\zeta - \dv\bv
\quad\text{on $S_R$},
\end{equation} 
where we have set
$$\hat H^1_{q', 0}(B_R) = \{\varphi \in L_{q, {\rm loc}}(B_R) \mid 
\nabla\varphi \in L_q(B_R)^N, \quad \varphi|_{S_R}=0\}$$
and $q' = q/(q-1)$.  In view of Poincar\'e's inequality, $\hat H^1_{q',0}(B_R) = 
H^1_{q', 0}(B_R) = \{\varphi \in H^1_{q'}(B_R) \mid \varphi|_{S_R}=0\}$. 
Instead of \eqref{4.8}, we consider the equations:
\begin{equation}\label{4.9}\begin{aligned}
ik\bw - \DV(\mu\bD(\bw) - K(\bw, \zeta)\bI) = \bff&&\quad&\text{in $B_R$}, \\
ik\zeta + \CM\zeta-(\CA\bw)\cdot\bn= d&&\quad&\text{on $S_R$}, \\
(\mu\bD(\bw) - K(\bw, \zeta)\bI)\bn - \sigma(\CB_R\zeta)\bn= 0&&\quad
&\text{on $S_R$}.
\end{aligned}\end{equation}
In view of the boundary condition \eqref{4.10*} for $K(\bw, \zeta)$, that $\bw$ and $\zeta$ satisfy
the third equation of equations \eqref{4.9} is equivalent to 
\begin{equation}\label{4.9*}
(\mu \bD(\bw)\bn)_\tau = 0 \quad\text{and}\quad \dv \bw = 0 \quad\text{on $S_R$},
\end{equation}
where $\bd_\tau = \bd - <\bd, \bn>\bn$ for any $N$-vector $\bd$. 
Let $J_q(B_R)$ be a solenoidal space defined by setting
$$J_q(B_R) = \{\bv \in L_q(B_R) \mid (\bv, \nabla \varphi)_{B_R} = 0
\quad\text{for any $\varphi \in \hat H^1_{q',0}(B_R)$}\}. 
$$
Obviously,  for $\bv \in H^1_q(B_R)$, in order that $\dv\bv= 0$ in $B_R$,
it is necessary and sufficient that $\bv \in J_q(B_R)$.  For any $\bff \in L_q(B_R)^N$, 
let $\psi \in H^1_{q,0}(B_R)$ be a unique solution of the weak Dirichlet problem:
$$(\nabla\psi, \nabla\varphi)_{B_R} = (\bff, \nabla\varphi)_{B_R}
\quad\text{for any $\varphi \in \hat H^1_{q', 0}(B_R)$}.
$$
Let $\bg =\bff -\nabla\psi$ and inserting this formula into equations \eqref{4.8},
we have
$$\begin{aligned}
ik\bw - \DV(\mu\bD(\bw) - (\fr-\psi)\bI) = \bg,
\quad \dv\bw = 0&&\quad&\text{in $B_R$}, \\
ik\zeta + \CM\zeta-(\CA\bw)\cdot\bn= d&&\quad&\text{on $S_R$}, \\
(\mu\bD(\bw) - (\fr-\psi)\bI)\bn - \sigma(\CB_R\zeta)\bn= 0&&\quad
&\text{on $S_R$}.
\end{aligned}$$
where we have used the fact that $\psi|_{S_R}=0$.  Therefore, we shall solve
equations \eqref{4.8} for $\bff \in J_q(B_R)$ and $d \in W^{2-1/q}_q(S_R)$. 
When $\bff \in J_q(B_R)$, the equations \eqref{4.8} and  \eqref{4.9}
are equivalent. In fact, if $\bw \in H^2_q(B_R)^N$ and $\zeta \in W^{3-1/q}_q(S_R)$ 
satisfy equations \eqref{4.8} with some $\fr \in H^1_q(B_R)$.  Then, for any 
$\varphi \in \hat H^1_{q',0}(B_R)$, we have
\begin{align*}
0 & = (\bff, \nabla\varphi)_{B_R} = (ik\bw - \DV(\mu\bD(\bw)), \nabla\varphi)_{B_R}
+ (\nabla\fr, \nabla\varphi)_{B_R} = (\nabla(\fr- K(\bw, \zeta)), \nabla\varphi)_{B_R},
\end{align*}
where we have used  the fact that $\dv\bw = 0$.  Moreover, from the boundary conditions 
in equations \eqref{4.8} and \eqref{4.10*}, it follows that
$$\fr- K(\bw, \zeta) = <\mu\bD(\bw) \bn, \bn> - \sigma \CB_R\zeta- K(\bw, \zeta)
= \dv\bw = 0
$$
on $S_R$ because $\dv \bw=0$.  Thus, the uniqueness of the solutions to his weak Dirichlet problem 
yields that 
$\fr = K(\bw, \zeta)$, and so $\bw$ and $\zeta$ satisfy equations \eqref{4.9}.
Conversely, let $\bw \in H^2_q(B_R)^N$ and $\zeta \in W^{3-1/q}_q(S_R)$ be 
solutions of equations \eqref{4.9}.  For any $\varphi \in \hat H^1_{q', 0}(B_R)$, 
we have
\begin{align*}
0 &= (\bff, \nabla\varphi)_{B_R} = ik(\bw, \nabla\varphi)_{B_R} 
-(\DV(\mu\bD(\bw)), \nabla\varphi)_{B_R} + (\nabla K(\bw, \zeta), \nabla\varphi)_{B_R} \\
& = -ik(\dv\bw, \varphi)_{B_R} - (\nabla\dv\bw, \nabla\varphi)_{B_R}
\end{align*}
Moreover, from the boundary condition \eqref{4.9*} it follows that
$\dv\bw=0$ on $S_R$.  The uniqueness implies that $\dv\bw=0$ in
$B_R$. Thus, $\bw$, $\fr = K(\bw, \zeta)$ and $\zeta$ are solutions 
of equations \eqref{4.8}.  In particular,  for solutions $\bw$ and $\zeta$ of
equations \eqref{4.9}, we see that $\bw$ satisfies the divergence
condition: $\dv\bw=0$ in $B_R$ automatically. 

From now on, we study the unique existence theorem for equations \eqref{4.9}
for any $\bff \in J_q(B_R)$ and $d\in W^{2-1/q}_q(S_R)$.  To formulate 
problem \eqref{4.9} in a functional analytic setting, we define the 
spaces $\CH_q$, $\CD_q$ and the operator $\bA$ by setting
\begin{align*}
\CH_q &= \{(\bff, d) \mid \bff \in J_q(B_R), \quad d \in W^{2-1/q}_q(S_R)\}, \\
\CD_q &= \{(\bw, \zeta) \in \CH_q \mid \bw \in H^2_q(B_R)^N, \quad
\zeta \in W^{3-1/q}_q(S_R), \quad
(\mu\bD(\bw))_\tau|_{S_R} = 0\}, \\
\bA U &= (\DV(\mu\bD(\bw)-K(\bw, \zeta)\bI), (-\CM\zeta +(\CA\bw)\cdot\bn)|_{S_R})
\quad\text{for $U = (\bw, \zeta) \in \CD_q$},
\end{align*}
where we have used \eqref{4.9*} and $\dv \bw=0$ in the definition of $\CD_q$. 
We write equations \eqref{4.9}  as
\begin{equation}\label{4.14} ik U - \bA U = F \quad\text{in $\CH_q$}.
\end{equation}
In view of Theorem \ref{thm:rs.1}, we see that $k=k_0+4$ is  an element of the resolvent set 
of the operator
$\bA$, and so $(i(k_0+4)\bI - \bA)^{-1}$ exists in 
$\CL(\CH_q, \CD_q)$.  Since $B_R$ is a compact set, it follows from the Rellich compactness
theorem that $(i(k_0+4)\bI - \bA)^{-1}$ is a compact operator from $\CH_q$ into itself.
Thus, in view of Riesz-Schauder theory, in particular, Fredholm alternative principle,
that $k$ belongs to the resolvent set if and only if uniqueness holds for $k$. 
Thus, our task is to prove the uniqueness of solutions to equations \eqref{4.14}.  
Let $U = (\bw, \zeta) \in \CD_q$ satisfy the homogeneous equations:
\begin{equation}\label{4.22} ik U - \bA U = 0 \quad\text{in $\CH_q$}.
\end{equation}
Namely, $(\bw, \zeta) \in \CD_q$ satisfies equations:
\begin{equation}\label{null:1}\begin{aligned}
ik\bw - \DV(\mu\bD(\bw) - K(\bw, \zeta)\bI) = 0
&&\quad&\text{in $B_R$}, \\
ik\zeta + \CM\zeta-(\CA\bw)\cdot\bn= 0&&\quad&\text{on $S_R$}, \\
(\mu\bD(\bw) - K(\bw, \zeta)\bI)\bn - \sigma(\CB_R\zeta)\bn= 0&&\quad
&\text{on $S_R$}.
\end{aligned}
\end{equation}
We first prove that
\begin{equation}\label{null:2}
(\zeta, 1)_{S_R} = 0, \quad (\zeta, x_j)_{S_R} = 0 \quad\text{for $j=1, \ldots, N$}.
\end{equation}
Integrating the second equation of equations \eqref{null:1} and applying the 
divergence theorem of Gauss gives that
$$0 = ik(\zeta, 1)_{S_R} + (\zeta, 1)_{S_R}|S_R|-\int_{B_R} \dv \CA\bw \,dx  
=(ik + |S_R|)(\zeta, 1)_{S_R},$$
where we have set $|S_R| = \int_{S_R}\,d\omega$ and we have used the fact that
$\dv\bw=0$ in $B_R$. Thus, we have $(\zeta, 1)_{S_R} = 0$. 
Multiplying the second equation of equations \eqref{null:1} with $x_j$,  
integrating the resultant formula over $S_R$ and using the divergence
theorem of Gauss gives that
\begin{align}\label{null:3}
0 = ik(\zeta, x_\ell)_{S_R} + (\zeta, x_\ell)_{S_R}(x_\ell, x_\ell)_{S_R} 
-\int_{B_R} \dv(x_\ell\CA\bw)\,dx,
\end{align}
because  $(x_j, x_\ell)_{S_R}=0$ for $j\not=\ell$.  Since
$$\int_{B_R} \dv(x_\ell\CA\bw)\,dx = \int_{B_R} (\bw_\ell - \frac{1}{|B_R|}\int_{B_R} \bw_\ell\,dx)\,dx
= 0, 
$$
we have $(\zeta, x_\ell)_{S_R} = 0$, because $(x_\ell, x_\ell)_{S_R} = (R^2/N)|S_R| > 0$.
Thus, we have proved \eqref{null:2}.  In particular, $\CM\zeta = 0$ in \eqref{null:1}. 

We now prove that $\bw = 0$.  For this purpose, we first consider the case where
$2 \leq q < \infty$. Since $B_R$ is bounded, $\CD_q \subset \CD_2$.  
Multiplying the first equation of \eqref{null:1} with $\bw$ and integrating the 
resultant formula over $B_R$ and using the divergence theorem of Gauss gives
that  
\begin{align*}
0 = ik\|\bw\|_{L_2(B_R)}^2 - \sigma(\CB_R\zeta, \bn\cdot\bw)_{S_R} + \frac{\mu}{2}
\|\bD(\bw)\|_{L_2(B_R)}^2,
\end{align*}
because $\dv\bw = 0$ in $B_R$.  By the second equation of \eqref{null:1}
with $\CM\zeta=0$, we have
$$\sigma(\CB_R\zeta, \bn\cdot\bw)_{S_R}
= \sigma (\CB_R\zeta, ik\zeta)_{S_R}
+ \sum_{k=1}^N\frac{1}{|B_R|}\int_{B_R}w_j\,dt(\CB_R\zeta, R^{-1}x_j)_{S_R}
$$
where we have used $\bn =R^{-1}x= R^{-1}(x_1,\ldots, x_N)$ for $x \in S_R$.
Thus, 
$$(\CB_R\zeta, x_j)_{S_R} = (\zeta, (\Delta_{S_R}+\frac{N-1}{R^2})x_j)_{S_R} =0.
$$
Moreover, since $\zeta$ satisfies \eqref{null:2}, we know that
$$-(\CB_R\zeta, \zeta)_{S_R} \geq c\|\zeta\|_{L_2(S_R)}^2
$$
for some positive constant $c$, and therefore \eqref{null:3} implies $\bw=0$. 

Now the first equation of \eqref{null:1} yields $\nabla K(\bw,\zeta)=0$, so that $K(\bw,\zeta)$ is 
constant. Integration of the third equation of \eqref{null:1} over $S_R$ combined with \eqref{null:2}
shows that this constant is $0$, that is, $K(\bw,\zeta)=0$.

Finally, the third equation of  \eqref{null:1} yields that $\CB_R\zeta=0$ on 
$S_R$, and so by \eqref{null:2} we have $\zeta=0$.  This completes the 
proof of the uniqueness in the case where $2 \leq q < \infty$.
In particular, we have the unique existence theorem of solutions to
equation \eqref{4.14}.  

We now consider the case where $1 < q < 2$.  Let $\bff$ be any element in  $J_{q'}(B_R)$ 
and let $V = (\bv, \eta) \in \CD_{q'}$ be a solution of the equation:
$$-ik V - \bA V= (\bff, 0) \quad\text{in $\CH_{q'}$}.
$$
The existence of such $V$ has already been proved above.  Since $d=0$, we see that 
$\eta$ satisfies the relations:
$$(\eta, 1)_{S_R} = 0, \quad (\eta, x_j)_{S_R} = 0 \quad\text{for $j=1, \ldots, N$},
$$
and so $\CM\eta = 0$.  Using the divergence theorem of Gauss, we have
\begin{align*}
(\bw, \bff)_{B_R} &= (\bw, -ik \bv - \DV(\mu\bD(\bv) - K(\bv, \eta)\bI))_{B_R} \\
&= (ik\bw, \bv)_{B_R} -(\bw, (\mu\bD(\bv) - K(\bv, \eta)\bI)\bn)_{S_R} 
+\frac{\mu}{2}(\bD(\bw), \bD(\bv))_{B_R} \\
&= (\DV(\mu(\bD(\bw) - K(\bw, \zeta)\bI), \bv)_{B_R} 
-\sigma(\bw\cdot\bn, \CB_R\eta)_{S_R} +\frac{\mu}{2}
(\bD(\bw), \bD(\bv))_{B_R}\\
& = \sigma(\CB_R \zeta, \bn\cdot\bv)_{S_R} -
\sigma(\bw\cdot\bn, \CB_R\eta)_{S_R} \\
& = \sigma(\CB_R\zeta, -ik\eta + \frac{1}{|B_R|}\int_{B_R}\bv\,dy\cdot\bn)_{S_R}
	-\sigma(ik\zeta + \frac{1}{|B_R|}\int_{B_R}\bw\,dy\cdot\bn, \CB_R\eta)_{S_R}.
\end{align*}
Using the fact that $(\CB_R\zeta, x_j)_{S_R} = (x_j, \CB_R\eta)_{S_R} =0$, we have
\begin{align*}(\bw, \bff)_{B_R} & = \sigma ik(\CB_R\zeta, \eta)_{S_R} - \sigma ik(\zeta, \CB_R\eta)_{S_R}
\\
&=\sigma ik\Bigl\{\frac{N-1}{R^2}(\zeta, \eta)_{S_R} - (\nabla_{S_R}\zeta, \nabla_{S_R}\eta)_{S_R}
-\frac{N-1}{R^2}(\zeta, \eta)_{S_R} + (\nabla_{S_R}\zeta, \nabla_{S_R}\eta)_{S_R}\Bigr\}
= 0. 
\end{align*}
For any $\bg \in L_{q'}(B_R)^N$, let $\psi \in \hat H^1_{q', 0}(B_R)$ be a unique solution of the
weak Dirichlet problem:
$$(\nabla\psi, \nabla\varphi)_{B_R} = (\bg, \nabla\varphi)_{B_R}
\quad\text{for any $\varphi \in \hat H^1_{q,0}(B_R)$}.$$
Let $\bff = \bg - \nabla\psi$, and then $\bff \in J_{q'}(B_R)$, and so using
the fact that $\bw \in J_q(B_R)$, we have $(\bw, \bg)_{B_R} = 
(\bw, \bff)_{B_R} + (\bw, \nabla\psi)_{B_R} = 0$.  The arbitrariness of 
$\bg\in L_{q'}(B_R)^N$ implies that $\bw=0$.  Thus, the second equation of \eqref{null:1}
and \eqref{null:2} leads to $\zeta=0$.  This completes the proof of the 
uniqueness in the case where $1 < q < 2$, and therefore the proof of 
Theorem \ref{thm:rs.3}. 
\end{proof}
We now consider the linearized stationary problem:
\begin{equation}\label{eq:s1}\begin{aligned}
\CL\bv-\DV(\mu\bD(\bv) - \fp\bI) = \bff&&\quad&\text{in $B_R$}, \\
\dv\bv = g = \dv\bg&&\quad&\text{in $B_R$}, \\
\CM\rho-(\CA\bv)\cdot\bn=d&&\quad&\text{on $S_R$}, \\
(\mu\bD(\bv) - \fp\bI)\bn - \sigma(\CB_R\rho)\bn=\bh
&&\quad&\text{on $S_R$}.
\end{aligned}\end{equation}
We shall prove the following theorem.
\begin{thm}\label{thm:s1} Let $1 < q < \infty$.  Then, for any 
$\bff \in L_q(B_R)^N$, $d \in W^{2-1/q}_q(S_R)$, $g \in H^1_q(B_R)$, 
$\bg \in L_q(B_R)^N$, and $\bh \in H^1_q(B_R)^N$, problem \eqref{eq:s1}
admits unique solutions $\bv \in H^2_q(B_R)^N$, $\fp \in H^1_q(B_R)$, 
and $\rho \in W^{3-1/q}_q(S_R)$ possessing the estimate:
\begin{equation}\label{eq:s2}\begin{aligned}
&\|\bv\|_{H^2_q(B_R)} + \|\fp\|_{H^1_q(B_R)} + \|\rho\|_{W^{3-1/q}_q(S_R)}
\\
&\quad \leq C(\|\bff\|_{L_q(B_R)} + \|d\|_{W^{2-1/q}_q(S_R)} 
+ \|(g, \bh)\|_{H^1_q(B_R)} + \|\bg\|_{L_q(B_R)})
\end{aligned}\end{equation}
for some constant $C > 0$. 
\end{thm}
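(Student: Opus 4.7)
\quad The plan is to adapt the functional-analytic framework used in the proof of Theorem \ref{thm:rs.3}, treating \eqref{eq:s1} as a compact perturbation of the (invertible) resolvent operator at $k=k_0+4$, and then combining the Fredholm alternative with an energy-based uniqueness argument that is enabled precisely by the added projections $\CL$ and $\CM$.

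\smallskip
First, I follow the same pressure-elimination and divergence-reduction procedure as in the proof of Theorem \ref{thm:rs.3}: absorb the divergence data $(g,\bg)$ into the pressure formulation, and define $K(\bv,\rho)\in H^1_q(B_R)$ as the unique solution of the weak Dirichlet problem
\[
(\nabla K, \nabla\varphi)_{B_R} = (\DV(\mu\bD(\bv))-\CL\bv,\nabla\varphi)_{B_R}\quad\text{for all $\varphi\in\hat H^1_{q',0}(B_R)$,}
\]
with boundary value $K|_{S_R}=\langle\mu\bD(\bv)\bn,\bn\rangle-\sigma\CB_R\rho$. With the spaces $\CH_q$ and $\CD_q$ as in that proof, \eqref{eq:s1} becomes $-\bA_0 U=F$ in $\CH_q$, where
\[
\bA_0(\bw,\zeta) = \bigl(\DV(\mu\bD(\bw)-K(\bw,\zeta)\bI)-\CL\bw,\;-\CM\zeta+(\CA\bw)\cdot\bn\bigr).
\]
Because $\CL$ and $\CM$ are finite-rank, the difference between $-\bA_0$ and the invertible operator $i(k_0+4)\bI-\bA$ of Theorem \ref{thm:rs.3} is a bounded operator on $\CH_q$; composing with the compact inverse $(i(k_0+4)\bI-\bA)^{-1}\colon\CH_q\to\CD_q\hookrightarrow\CH_q$ (compact by Rellich) shows that $-\bA_0$ differs from a bijection by a compact perturbation. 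By the Fredholm alternative, bijectivity of $-\bA_0$ on $\CD_q$ reduces to its injectivity.

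\smallskip
To prove injectivity for $2\le q<\infty$, take a homogeneous solution $(\bv,\rho)\in\CD_q\subset\CD_2$ (the embedding uses boundedness of $B_R$). Testing the third equation against $1$ and against $y_j$ on $S_R$ and using $\dv\bv=0$ together with the symmetries $(1,y_j)_{S_R}=0$ and $(y_i,y_j)_{S_R}=(R^2|S_R|/N)\delta_{ij}$ gives $(\rho,1)_{S_R}=0$ and $(\rho,y_j)_{S_R}=0$, so $\CM\rho=0$ and $\bv\cdot\bn=|B_R|^{-1}(\int_{B_R}\bv\,dy)\cdot\bn$ on $S_R$. Multiplying the first equation by $\overline{\bv}$ and integrating by parts, the boundary contribution equals
\[
\sigma(\CB_R\rho,\bv\cdot\bn)_{S_R}=\frac{\sigma}{R\,|B_R|}\sum_{j=1}^N\Bigl(\int_{B_R}v_j\,dy\Bigr)(\rho,\CB_R y_j)_{S_R}=0,
\]
since $\CB_R y_j=\Delta_{S_R}y_j+(N-1)R^{-2}y_j=0$. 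This leaves $(\CL\bv,\bv)_{B_R}+(\mu/2)\|\bD(\bv)\|_{L_2(B_R)}^2=0$; nonnegativity of both terms forces $\bD(\bv)=0$ and $(\bv,\bp_k)_{B_R}=0$ for all $k$, hence $\bv=0$. Then $\nabla\fp=0$, so $\fp$ is constant; integrating the fourth equation against $\bn$ over $S_R$ and using $(\CB_R\rho,1)_{S_R}=((N-1)/R^2)(\rho,1)_{S_R}=0$ forces $\fp=0$, after which $\CB_R\rho=0$ together with the orthogonality relations on $\rho$ yields $\rho=0$. The case $1<q<2$ follows by the same duality argument as at the end of the proof of Theorem \ref{thm:rs.3}, using the self-adjointness of $\CL$ and $\CM$ in the $L_2$ pairing.

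\smallskip
The principal difficulty lies in the energy computation: one must verify that the $\CB_R\rho$ boundary term truly vanishes. This demands both the orthogonality of $\rho$ to the spherical harmonics of degrees $0$ and $1$ (forced by the $\CM$ correction) and the identity $\CB_R y_j=0$. Without $\CM$ the first-order harmonics in $\rho$ would survive and destroy coercivity of $-\CB_R$ on the right subspace, and without $\CL$ the rigid-motion component of $\bv$ would obstruct uniqueness. These two modifications are precisely the corrections advertised in the introduction for circumventing the zero eigenvalue of the linearized stationary operator.
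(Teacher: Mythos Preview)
Your proposal is correct and follows essentially the same route as the paper: reduce to a Fredholm problem by comparing with an invertible reference operator, then prove $L_2$-uniqueness via the energy identity, exploiting the orthogonality $(\rho,1)_{S_R}=(\rho,y_j)_{S_R}=0$ forced by $\CM$ and the identity $\CB_R y_j=0$, after which $\bD(\bv)=0$ and $(\bv,\bp_k)_{B_R}=0$ give $\bv=0$, and the remaining scalar equations yield $\fp=0$, $\rho=0$. The only cosmetic difference is that the paper takes as reference the shifted stationary operator \eqref{eq:s3} (i.e.\ $ik_0\bI-\bA_0$ with $\CL,\CM$ already built in, solvable for large $k_0$ since $\CL,\CM,\CA$ are lower-order), whereas you compare $-\bA_0$ directly to the operator $i(k_0+4)\bI-\bA$ from Theorem~\ref{thm:rs.3}; both produce a compact perturbation of a bijection and lead to the same Fredholm conclusion. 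Two small points worth tightening: the paper's $K(\bv,\rho)$ carries the extra terms $-\nabla\dv\bv$ and $-\dv\bv$ (cf.\ \eqref{eq:s4}--\eqref{eq:s5}) to make the equivalence with the original system transparent for general $\bv$, though on $\CD_q$ these vanish; and the inhomogeneous boundary datum $\bh$ (not only $g,\bg$) must also be removed in the preliminary reduction, which the paper does via the auxiliary resolvent problem.
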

\begin{proof}
The strategy of the proof is the same as that  of Theorem \ref{thm:rs.3}. 
Since $\CL\bv$, $\CM\rho$, and $|B_R|^{-1}\int_{B_R}\bv\,dy$ are 
lower order perturbations, choosing $k_0 > 0$ large enough, 
the generalized resolvent problem:
\begin{equation}\label{eq:s3}\begin{aligned}
ik_0\bv + \CL\bv-\DV(\mu\bD(\bv) - \fp\bI) = \bff&&\quad&\text{in $B_R$}, \\
\dv\bv = g = \dv\bg&&\quad&\text{in $B_R$}, \\
ik_0\rho + \CM\rho-(\CA\bv)\cdot\bn=d&&\quad&\text{on $S_R$}, \\
(\mu\bD(\bv) - \fp\bI)\bn - \sigma(\CB_R\rho)\bn=\bh
&&\quad&\text{on $S_R$}.
\end{aligned}\end{equation}
admits unique solutions: $\bv \in H^2_q(B_R)^N$, $\fp \in H^1_q(B_R)$, 
and $\rho \in W^{3-1/q}_q(S_R)$ possessing the estimate \eqref{eq:s2}.
Of course, the constant $C$ in \eqref{eq:s2} depends on $k_0$ in this case,
but $k_0$ is fixed, and so we can say that  $C$ in \eqref{eq:s2} is 
some fixed constant. The essential part of the proof is to show the unique
existence of solutions to equations \eqref{eq:s1} with $g=\bg=\bh=0$,
that is 
\begin{equation}\label{eq:s1**}\begin{aligned}
\CL\bv-\DV(\mu\bD(\bv) - \fp\bI) = \bff&&\quad&\text{in $B_R$}, \\
\dv\bv = 0&&\quad&\text{in $B_R$}, \\
\CM\rho-(\CA\bv)\cdot\bn=d&&\quad&\text{on $S_R$}, \\
(\mu\bD(\bv) - \fp\bI)\bn - \sigma(\CB_R\rho)\bn=0
&&\quad&\text{on $S_R$}.
\end{aligned}\end{equation}
And then, the uniqueness of the reduced problem in the $L_2$ framework
implies the unique existence of solutions as was studied in the proof 
Theorem \ref{thm:rs.3}.  Thus, we define the reduced problem corresponding
to equations \eqref{eq:s1}.  For $\bv \in H^2_q(B_R)^N$ and 
$\rho \in W^{3-1/q}_q(S_R)$, let $K = K(\bv, \rho) \in H^1_q(B_R)$
be the unique solution of the weak Dirichlet problem:
\begin{equation}\label{eq:s4}
(\nabla K, \nabla\varphi)_{B_R} = (\DV(\mu\bD(\bv)-\CL\bv-
\nabla\dv\bv, \nabla\varphi)_{B_R}
\quad\text{for any $\varphi \in \hat H^1_{q',0}(B_R)$}, 
\end{equation}
subject to the boundary condition:
\begin{equation}\label{eq:s5}
K = <\mu\bD(\bv)\bn, \bn> - \sigma\CB_R\rho - \dv\bv
\quad\text{on $B_R$}.
\end{equation}
Then the reduced problem corresponding to 
problem \eqref{eq:s1} with $g=\bg=\bh=0$ is given by
the following equations:
\begin{equation}\label{eq:s1*}\begin{aligned}
\CL\bv-\DV(\mu\bD(\bv) - K(\bv, \rho)\bI) = \bff&&\quad&\text{in $B_R$}, \\
\CM\rho-(\CA\bv)\cdot\bn=d&&\quad&\text{on $S_R$}, \\
(\mu\bD(\bv) - K(\bv, \rho)\bI)\bn - \sigma(\CB_R\rho)\bn=0
&&\quad&\text{on $S_R$}.
\end{aligned}\end{equation}
Then, for $\bff \in J_q(B_R)$ and $d \in W^{2-1/q}_q(S_R)$, problems
\eqref{eq:s1**} and \eqref{eq:s1*} are equivalent. In fact,
if problem \eqref{eq:s1**} admits unique solutions
$\bv \in H^2_q(B_R)^N$, $\fp \in H^1_q(B_R)$ and $\rho \in W^{3-1/q}_q(S_R)$, 
then for any $\varphi \in \hat H^1_{q',0}(B_R)$, we have
\begin{align*}
0 = (\bff, \nabla\varphi)_{B_R} = (\CL\bv - \DV(\mu\bD(\bv)), 
\nabla\varphi)_{B_R}
+ (\nabla\fp, \nabla\varphi)_{B_R} 
= (\nabla(\fp - K(\bv, \rho)), \nabla\varphi)_{B_R} 
\end{align*}
because $\dv\bv =0$ in $B_R$. Moreover, from the boundary conditions in \eqref{eq:s1**}
and \eqref{eq:s5} it follows that 
$$\fp-K(\bv, \rho) = <\mu\bD(\bv)\bn, \bn> - \sigma\CB_R\rho - 
K(\bv, \rho) = \dv \bv = 0
$$
on $S_R$.  The uniqueness of the weak Dirichlet problem leads to 
$\fp = K(\bv, \rho)$, and therefore $\bv$ and $\rho$ satisfy
equations \eqref{eq:s1*}.  Conversely, if $\bv \in H^2_q(B_R)^N$ and
$\rho \in W^{3-1/q}_q(S_R)$ satisfy the equations \eqref{eq:s1*}, 
then for any $\varphi \in \hat H^1_{q', 0}(B_R)$ we have
\begin{align*}
0 = (\bff, \nabla\varphi)_{B_R} = (\CL\bv-\DV(\mu\bD(\bv), \nabla\varphi)_{B_R}
+ (\nabla K(\bv, \rho), \nabla\varphi)_{B_R}
= (\nabla\dv\bv, \nabla\varphi)_{B_R}.
\end{align*}
Moreover, the boundary conditions of \eqref{eq:s1*} and \eqref{eq:s5} gives that
$$\dv\bv = <\mu\bD(\bv)\bn, \bn>- \sigma\CB_R\rho - K(\bv, \rho) = 0.
$$
The uniqueness of the weak Dirichlet problem yields that $\dv\bv=0$, 
and therefore, $\bv$, $\fp = K(\bv, \rho)$ and $\rho$ are solutions of 
equations \eqref{eq:s1**}. 

Finally, we show the uniqueness of equations \eqref{eq:s3} in the $L_2$-framework,
which yields Theorem \ref{thm:s1}.  Let $\bv \in H^2_2(B_R)^N$ and 
$\rho \in W^{5/2}_2(S_R)$ satisfy the homogeneous equations:
\begin{equation}\label{eq:s6}\begin{aligned}
\CL\bv-\DV(\mu\bD(\bv) - K(\bv, \rho)\bI) = 0&&\quad&\text{in $B_R$}, \\
\CM\rho-(\CA\bv)\cdot\bn=0&&\quad&\text{on $S_R$}, \\
(\mu\bD(\bv) - K(\bv, \rho)\bI)\bn - \sigma(\CB_R\rho)\bn=0
&&\quad&\text{on $S_R$}.
\end{aligned}\end{equation}
Note that $\dv\bv=0$ in $B_R$. Employing the same argument as in
the proof of Theorem \ref{thm:rs.3}, we have
\begin{equation}\label{null:s1} (\rho, 1)_{S_R} = 0, 
\quad (\rho, x_j)_{S_R}=0 \quad\text{for $j=1, \ldots, N$}.
\end{equation}
In particular, $\CM\rho=0$.  Multiplying the first equation with $\bv$,
integrating the resultant formula on $B_R$ and using the 
divergence theorem of Gauss gives that
$$0 = (\CL\bv, \bv)_{B_R} + 
\sigma(\CB_R\rho, \bn \cdot\bv)_{S_R} + \frac{\mu}{2}\|\bD(\bv)\|_{L_2(B_R)}^2,
$$
because $(K(\bv, \rho), \dv\bv) = 0$ as follows from $\dv\bv=0$ in $B_R$. 
From \eqref{op:1} it follows that 
$$(\CL\bv, \bv)_{B_R} = \sum_{k=1}^M|(\bv, \bp_k)_{B_R}|^2.
$$
From the second equation of  \eqref{eq:s6} with $\CM\rho=0$ it follows that
$$(\CB_R\rho, \bn\cdot\bv)_{S_R} = \sum_{j=1}^N R^{-1}
(\CB_R\rho, x_j)_{S_R}\frac{1}{|B_R|}\int_{B_R}v_j\,dy = 0.
$$
Combining these formulas yields that 
$$0 = \sum_{k=1}^M|(\bv, \bp_k)_{B_R}|^2 + \frac{\mu}{2}\|\bD(\bv)\|_{L_2(B_R)}^2,
$$
which leads to $\bD(\bv) = 0$ and $(\bv, \bp_k)_{B_R} = 0$ for $k = 1, \ldots, M$.
Thus, we have $\bv=0$.  From the first equation of \eqref{eq:s6}, 
we have $\nabla K(\bv, \rho) = 0$, and so $K(\bv, \rho) = c$ with some constant $c$.
From the boundary condition of \eqref{eq:s6}, we have
$\sigma\CB\rho=-c$ on $B_R$.  Integrating this formula on $S_R$ and using 
the fact $(\rho, 1)_{S_R}=0$ in 
\eqref{null:s1} gives that $c=0$.  Thus, $\CB_R\rho=0$ on $S_R$, but 
we know \eqref{null:s1}, and so 
$$0 = -(\CB_R\rho, \rho)_{S_R} \geq c\|\rho\|_{L_2(S_R)}^2
$$
for some constant $c > 0$, which shows that $\rho=0$.    This completes the proof of the 
uniqueness in the $L_2$ framework, the proof of Theorem \ref{thm:s1}.
\end{proof}
{\bf Proof of Theorem \ref{thm:linear.main1}.} We now prove Theorem \ref{thm:linear.main1}.  Let 
$\bu_\psi$, $\fp_\psi$ and $\rho_\psi$ be functions given in Theorem \ref{thm:rs.2}
which are solutions of equations \eqref{4.5}.  Notice that $\psi(ik) = 1$ for 
for $|k| \geq k_0+4$ and $\psi(ik) = 0$ for $|k| \leq k_0+3$.  For 
$k \in \BZ$ with $1\leq |k| \leq k_0+3$, let 
\begin{align*}
\bff = \CF_\BT[\bF](ik), \quad g=\CF_\BT[G](ik), \quad \bg = \CF_\BT[\bG](ik), 
\quad d = \CF_\BT[D](ik), \quad \bh = \CF_\BT[\bH](ik)
\end{align*}
in  equations \eqref{4.2}, and we write solutions $\bv$, $\fq$ and $\eta$ as
$\bv_k=\bv$, $\fq_k=\fq$ and $\eta_k = \eta$.  Let 
\begin{align*}
\bu_k &= e^{ikt}\bv_k, \quad \fp_k = e^{ikt}\fq_k, \quad \rho_k = e^{ikt}\eta_k, 
\end{align*}
and then, $\bu_k$, $\fp_k$ and $\rho_k$ satisfy the equations:
\begin{equation}\label{4.1k}\begin{aligned}
\pd_t\bu_k -\DV(\mu\bD(\bu_k)-\fp_k\bI) = e^{ikt}\CF_\BT[\bF](ik)&&\quad&\text{in $B_R$}, \\
\dv \bu_k =e^{ikt}\CF_\BT[G](ik) = \dv(e^{ikt}\CF_\BT[\bG](ik))&&\quad&\text{in $B_R$}, \\
\pd_t\rho_k + \CM\rho_k-(\CA\bu_k)\cdot\bn=e^{ikt}\CF_\BT[D](ik)&&\quad&\text{on $S_R$},\\
(\mu\bD(\bu_k)-\fp_k\bI)\bn - (\CB_R\rho_k)\bn=e^{ikt}\CF_\BT[\bH](ik)&&\quad&
\text{on $S_R$}.
\end{aligned}\end{equation}
Let $\bff = \bF_S$, $d=D_S$, $g=G_S$, $\bg = \bG_S$ and $\bh = \bH_S$ in equations
\eqref{eq:s1}, and let $\bv$, $\fp$ and $\rho$ be unique solutions of equations \eqref{eq:s1}.
We write $\bu_S = \bv$, $\fp_S = \fp$ and $\rho_S = \rho$.  Under these preparations, 
we set
\begin{align*}
\bu &= \bu_S + \sum_{1 \leq |k| \leq k_0+3} \bu_k + \bu_\psi, \\
\fp & = \fp_S + \sum_{1 \leq |k| \leq k_0+3} \fp_k + \fp_\psi, \\
\rho & = \rho_S + \sum_{1 \leq |k| \leq k_0+3} \rho_k + \rho_\psi
\end{align*}
and then $\bu$, $\fp$ and $\rho$ are unique solutions of equations \eqref{4.1}.
Moreover, by Theorem \ref{thm:rs.2}, Theorem \ref{thm:rs.3}, and 
Theorem \ref{thm:s1}, we see that $\bu$, $\fp$ and $\rho$ satisfy the estimate
\eqref{main:est1}.  In fact, for $f = f_S + \sum_{1\leq |k| \leq k_0+3}
e^{ikt}f_k + f_\psi$, we have the following estimates: 
\begin{align*}
\|f\|_{L_p((0, 2\pi), X)} &\leq \|f_S\|_{L_p((0, 2\pi), X)} 
+ \sum_{1 \leq |k| \leq k_0+3}\|e^{ikt}f_k\|_{L_p((0, 2\pi), X)}
+ \|f_\psi\|_{L_p((0, 2\pi), X)}\\
&\leq (2\pi)^{1/p}\|f_S\|_X + (2\pi)^{1/p}\sum_{1 \leq |k| \leq k_0+3}\|f_k\|_X + 
\|f_\psi\|_{L_p((0, 2\pi), X)}, \\
\|\pd_t f\|_{L_p((0, 2\pi), X)} &\leq 
\sum_{1 \leq |k| \leq k_0+3}\|(ik)e^{ikt}f_k\|_{L_p((0, 2\pi), X)}
+ \|\pd_tf_\psi\|_{L_p((0, 2\pi), X)}\\
&\leq (2\pi)^{1/p}(k_0+3)\sum_{1 \leq |k| \leq k_0+3}\|f_k\|_X + 
\|\pd_tf_\psi\|_{L_p((0, 2\pi), X)}.
\end{align*}
By H\"older's inequality, we have
$$\|f_S\|_{L_p((0, 2\pi), X)} \leq 2\pi\|f\|_{L_p((0, 2\pi), X)}, \quad
\|e^{ikt}\CF_\BT[f](ik)\|_{L_p((0, 2\pi), X)}  \leq 2\pi\|f\|_{L_p((0, 2\pi), X)},
$$
and for any UMD Banach space $X$, 
using Lemma \ref{lem:rp.2} and transference theorem, 
Theorem \ref{ESK},  we have 
\begin{align*}
\|f_\psi\|_{L_p((0, 2\pi), X)}, & \leq C\|\psi\|_{H^1_\infty}\|f\|_{L_p((0, 2\pi), X)}, \\
\|\pd_t f_\psi\|_{L_p((0, 2\pi), X)} &\leq C\|\psi\|_{H^1_\infty}\|\pd_tf\|_{L_p((0, 2\pi), X)}, \\
\|\Lambda^{1/2}f_\psi\|_{L_p((0, 2\pi), X)}
& \leq \|\CF^{-1}_\BT[((ik)^{1/2}/(1+k^2)^{1/4})\psi(ik)(1+k^2)^{1/4}\CF_\BT[f](ik)]\|_{L_p
((0, 2\pi), X)} \\
& \leq C\bigl(\sum_{\ell=0,1}\sup_{\lambda \in \BR}|(\lambda\frac{d}{d\lambda})^\ell(
((i\lambda)^{1/2}/(1+\lambda^2)^{1/4})\psi(i\lambda))|\bigr) \|f\|_{H^{1/2}_p((0, 2\pi), X)}.
\end{align*}
\subsection{On linearized problem of two-phase problem}
In this subsection, we consider the linear equations: 
\begin{equation}\label{4.21}\begin{aligned}
\pd_t\bu_\pm-\DV(\mu\bD(\bu_\pm)-\fp_\pm\bI) = \bF_\pm 
&&\quad&\text{in $\Omega_\pm\times(0, 2\pi)$}, \\
\dv \bu_\pm =G_\pm = \dv\bG_\pm&&\quad&\text{in $\Omega_\pm\times(0, 2\pi)$}, \\
\pd_t\rho + \CM\rho-(\CA\bu)\cdot\bn=D&&\quad&\text{on $S_R\times(0, 2\pi)$},\\
[[\mu\bD(\bu)-\fp\bI)]]\bn - (\CB_R\rho)\bn=\bH&&\quad&
\text{on $S_R\times(0, 2\pi)$}, \\
[[\bu]]=0 &&\quad&
\text{on $S_R\times(0, 2\pi)$}, \\
\bu_-=0&&\quad&\text{on $S\times(0, 2\pi)$}.
\end{aligned}\end{equation}
where $\Omega_+ = B_R$, $\Omega_- = \Omega\setminus(B_R\cup S_R)$, and 
$\CM$, $\CA$ and $\CB_R$ are the linear operators defined in \eqref{op:1}. 
We shall prove the unique existence theorem of $2\pi$-periodic solutions of equations
\eqref{4.21}.  Our main result in this section is stated as follows.
\begin{thm}\label{thm:linear.main2} Let $1 < p, q < \infty$.  Then, for any 
$\bF_\pm$, $D$, $G_\pm$, $\bG_\pm$ and $\bH$ with
\begin{align*}
\bF_\pm & \in L_{p, {\rm per}}((0, 2\pi), L_q(\Omega_\pm)^N), \quad 
D \in L_{p, {\rm per}}((0, 2\pi), W^{2-1/q}_q(S_R)) \\
G_\pm & \in L_{p, {\rm per}}((0, 2\pi), H^1_q(\Omega_\pm)) 
\cap H^{1/2}_{p, {\rm per}}((0, 2\pi), L_q(\Omega_\pm)), 
\quad \bG_\pm \in H^1_{p, {\rm per}}((0, 2\pi),  L_q(\Omega_\pm)^N), \\
\bH& \in L_{p, {\rm per}}((0, 2\pi), H^1_q(\Omega)^N) \cap H^{1/2}_{p, {\rm per}}((0, 2\pi), L_q(\Omega)^N), 
\end{align*}
problem \eqref{4.1} admits unique solutions $\bu_\pm$, $\fp_\pm$ and $\rho$ with 
\begin{align*}
\bu_\pm & \in L_{p, {\rm per}}((0, 2\pi), H^2_q(\Omega_\pm)^N) \cap H^1_{p, {\rm per}}
((0, 2\pi), L_q(\Omega_\pm)^N), \\
\fp_\pm & \in L_{p, {\rm per}}((0, 2\pi), H^1_q(\Omega_\pm)), \quad 
\sum_{\pm}\int_{\Omega_\pm}\fp_\pm(x,t)\,dx=0 \ \text{ for }t\in(0,2\pi), \\
\rho & \in L_{p, {\rm per}}((0, 2\pi), W^{3-1/q}_q(S_R)) \cap H^1_{p, {\rm per}}
((0, 2\pi), W^{2-1/q}_q(S_R))
\end{align*}
possessing the estimate:
\begin{equation}\label{main:est2}\begin{aligned}
&\sum_\pm\{ \|\bu_\pm\|_{L_p((0, 2\pi), H^2_q(\Omega_\pm))} 
+ \|\pd_t\bu_\pm\|_{L_p((0, 2\pi), L_q(\Omega_\pm))} 
+ \|\nabla \fp_\pm\|_{L_p((0, 2\pi), L_q(\Omega_\pm))}\}  \\
&\quad + \|\rho\|_{L_p((0, 2\pi), W^{3-1/q}_q(S_R))} + 
\|\pd_t\rho\|_{L_p((0, 2\pi), W^{2-1/q}_q(S_R))} \\
&\qquad \leq C\{\sum_{\pm}\|\bF_\pm\|_{L_p((0, 2\pi), L_q(\Omega_\pm))} 
+ \|D\|_{L_p((0, 2\pi), W^{2-1/q}_q(S_R))} 
+ \sum_\pm\|\pd_t\bG_\pm\|_{L_p((0, 2\pi), L_q(\Omega_\pm))} \\
&\quad\qquad + \sum_\pm \|G_\pm\|_{L_p((0, 2\pi), H^1_q(\Omega_\pm))}
+ \|G_\pm\|_{H^{1/2}_p((0, 2\pi), L_q(\Omega_\pm))} \\
&\qquad\qquad 
+ \|\bH\|_{L_p((0, 2\pi), H^1_q(\Omega))}
+ \|\bH\|_{H^{1/2}_p((0, 2\pi), L_q(\Omega))} 
\}\end{aligned}\end{equation} 
for some constant $C > 0$. 
\end{thm}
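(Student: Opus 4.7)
The plan is to follow verbatim the strategy used for the one-phase case in Theorem \ref{thm:linear.main1}, replacing each ingredient by its two-phase analogue. First, I would invoke an $\CR$-solver for the two-phase generalized resolvent problem
\begin{equation*}\begin{aligned}
ik\bv_\pm - \DV(\mu_\pm\bD(\bv_\pm)-\fq_\pm\bI) &= \bff_\pm &&\text{in } \Omega_\pm, \\
\dv\bv_\pm &= g_\pm = \dv\bg_\pm &&\text{in } \Omega_\pm, \\
ik\eta + \CM\eta - (\CA\bv_+)\cdot\bn &= d &&\text{on } S_R, \\
[[\mu\bD(\bv)-\fq\bI]]\bn - \sigma(\CB_R\eta)\bn &= \bh &&\text{on } S_R, \\
[[\bv]] &= 0 &&\text{on } S_R, \quad \bv_- = 0 \text{ on } S,
\end{aligned}\end{equation*}
which is the direct analogue of Theorem \ref{thm:rs.1}; such an $\CR$-solver on $\BR_{k_0}$ for sufficiently large $k_0$ is provided by the systematic treatment in \cite{S1, S2, S3, S4, SS1}. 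Multiplying by the same cut-off $\varphi(ik)$ gives the analogue of estimates \eqref{4.4} for the two-phase problem.

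Second, I would apply Weis' theorem (Theorem \ref{Weis}) together with the transference theorem (Theorem \ref{ESK}) exactly as in the derivation of Theorem \ref{thm:rs.2}, yielding the maximal $L_p$-$L_q$ regularity estimate for the high-frequency part $\bu_{\pm,\psi}$, $\fp_{\pm,\psi}$, $\rho_\psi$ obtained by inverting $\psi(ik)\CF_\BT$. Third, for each integer $k$ with $1\leq|k|\leq k_0+3$, I would repeat the Riesz-Schauder argument from Theorem \ref{thm:rs.3}: reduce the resolvent equation to the pressureless, divergence-free form via the weak Dirichlet problem with transmission condition encoding $[[\mu\bD(\bv)\bn]]_\tau$ and $\dv\bv_\pm$ at $S_R$, then interpret the system as $ikU - \bA U = F$ on a space $\CH_q$ of two-phase solenoidal fields, exploit compactness of $(i(k_0+4)-\bA)^{-1}$ (Rellich) and prove uniqueness in the $L_2$ framework. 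In this $L_2$ energy argument, testing the first equation against $\bv$ gives
\begin{equation*}
0 = ik(\|\bv_+\|_{L_2(\Omega_+)}^2 + \|\bv_-\|_{L_2(\Omega_-)}^2) -\sigma(\CB_R\zeta,\bn\cdot\bv_+)_{S_R} + \tfrac{\mu_+}{2}\|\bD(\bv_+)\|_{L_2(\Omega_+)}^2 + \tfrac{\mu_-}{2}\|\bD(\bv_-)\|_{L_2(\Omega_-)}^2,
\end{equation*}
using $[[\bv]]=0$ on $S_R$, $\bv_-|_S=0$, and $\dv\bv_\pm=0$; the conditions $(\zeta,1)_{S_R}=(\zeta,x_j)_{S_R}=0$ follow as in \eqref{null:2}, so the surface term reduces to $\sigma ik(\CB_R\zeta,\zeta)_{S_R}$, from which the bound $-(\CB_R\zeta,\zeta)_{S_R}\geq c\|\zeta\|_{L_2(S_R)}^2$ on the relevant subspace, Korn's inequality together with $\bv_-|_S=0$, and $[[\bv]]=0$ force $\bv_\pm=0$ and $\zeta=0$. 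The case $1<q<2$ is handled by duality exactly as in the end of the proof of Theorem \ref{thm:rs.3}.

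Fourth, for the stationary mode $k=0$, I would set up the analogue of Theorem \ref{thm:s1}: introduce the reduced elliptic transmission problem using the two-phase weak Dirichlet problem for $K(\bv,\rho)$, prove solvability by shifting to some fixed $ik_0$ where the $\CR$-solver applies, and prove uniqueness of the homogeneous problem in $L_2$. The absence of $\CL$ in \eqref{4.21} is compensated by the no-slip condition on $S$: the energy identity combined with Korn's inequality on $\Omega_-$ (where $\bv_-$ vanishes on $S$) gives $\bv_-=0$, hence $\bv_+|_{S_R}=0$ by the transmission condition, and another Korn application yields $\bv_+=0$; then $\nabla K(\bv,\rho)=0$ so each $\fp_\pm$ is constant, the pressure normalization $\sum_\pm\int_{\Omega_\pm}\fp_\pm\,dx=0$ together with the jump condition fixes both constants to $0$, and the argument with $\CM\rho=0$ and $\CB_R\rho=0$ forces $\rho=0$. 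Finally, I would assemble $\bu_\pm = \bu_{\pm,S} + \sum_{1\leq|k|\leq k_0+3}\bu_{\pm,k} + \bu_{\pm,\psi}$ and analogously for $\fp_\pm$ and $\rho$, and derive the estimate \eqref{main:est2} by the same elementary Hölder inequalities for the finite Fourier modes and Weis/transference bounds for the high-frequency tail as in the proof of Theorem \ref{thm:linear.main1}.

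The main obstacle I anticipate is the $L_2$ uniqueness step: in the two-phase setting the energy identity mixes integrals on $\Omega_+$ and $\Omega_-$ across the interface $S_R$, and care is required to verify that the weak Dirichlet problem and reduced operator $\bA$ are correctly formulated so that every boundary term (the jump of normal stress, the mean-value terms in $\CA$, the spherical harmonics killed by $\CM$) integrates correctly. Once the correct weak formulation and the appropriate version of Korn's inequality on $\Omega_-$ (with vanishing trace on $S$) are in place, the remainder of the proof is a mechanical translation of Sections 4.1 to the two-phase setting.
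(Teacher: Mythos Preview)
Your overall strategy is correct and matches the paper's approach: $\CR$-solver for high frequencies via Weis plus transference, Riesz--Schauder plus $L_2$-uniqueness for the finitely many low modes, then assemble. Two points deserve correction.

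First, in the two-phase setting the pressure reduction is \emph{not} via a weak Dirichlet problem but via a weak Neumann problem with transmission condition: one solves
\[
(\nabla K,\nabla\varphi)_{\dot\Omega}=(\DV(\mu\bD(\bv))-\nabla\dv\bv,\nabla\varphi)_{\dot\Omega}
\quad\text{for all }\varphi\in \dot H^1_{q'}(\Omega),
\]
subject to the jump condition $[[K]]=\langle[[\mu\bD(\bv)]]\bn,\bn\rangle-\sigma\CB_R\zeta-[[\dv\bv]]$ on $S_R$, with $K\in\dot H^1_q(\dot\Omega)$. This is forced by the geometry: the outer boundary $S$ carries a Dirichlet condition on the velocity, so the pressure is determined only up to an additive constant, which is why the theorem imposes $\int_{\dot\Omega}\fp\,dx=0$. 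The solenoidal space is accordingly $J_q(\dot\Omega)=\{\bu\in L_q(\dot\Omega)\mid(\bu,\nabla\varphi)_{\dot\Omega}=0\text{ for all }\varphi\in\dot H^1_{q'}(\Omega)\}$. With this formulation the equivalence between the original and reduced problems and the $L_2$ energy identity go through exactly as you sketch; your anticipated obstacle is resolved once the weak problem is set up this way.

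Second, you need not treat $k=0$ separately as a stationary analogue of Theorem~\ref{thm:s1}. Because the no-slip condition $\bv_-|_S=0$ already kills rigid motions (this is precisely why no operator $\CL$ appears in \eqref{4.21}), the $L_2$-uniqueness argument for the reduced problem works uniformly for all integers $|k|\le k_0+3$, including $k=0$: the energy identity gives $\bD(\bv)=0$ on $\dot\Omega$, and since $\bv\in H^1_q(\Omega)$ with $\bv=0$ on $S$, one concludes $\bv=0$ directly. The paper therefore handles all low modes in a single statement (its Theorem~\ref{thm:rs.23}), and your step four is redundant rather than wrong.
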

To prove Theorem \ref{thm:linear.main2}, 
the strategy  is the same as in the proof of Theorem \ref{thm:linear.main1}.
Therefore, we first consider
the $\CR$-solver of the generalized resolvent problem:
\begin{equation}\label{4.22}\begin{aligned}
ik\bv_\pm -\DV(\mu\bD(\bv_\pm)-\fq_\pm\bI) = \bff_\pm&&\quad&\text{in $\Omega_\pm$}, \\
\dv \bv_\pm =g_\pm = \dv\bg_\pm&&\quad&\text{in $\Omega_\pm$}, \\
ik\eta + \CM\eta-(\CA\bv_+)\cdot\bn=d&&\quad&\text{on $S_R$},\\
[[\mu\bD(\bv)-\fq\bI]]\bn - (\CB_R\eta)\bn=\bh&&\quad&
\text{on $S_R$}, \\
[[\bv]]=0&&\quad&
\text{on $S_R$}, \\
\bv_- = 0&&\quad&\text{on $S$}
\end{aligned}\end{equation}
for $k \in \BR$.  From Theorem 2.1.4  in Shibata and Saito \cite{SS1} 
we know the following theorem concerned with the existence of an $\CR$-solver
of problem \eqref{4.21}.
\begin{thm}\label{thm:rs.21} Let $1 < q < \infty$ and let $\BR_{k_0}
= \BR\setminus(-k_0, k_0)$.  Let 
\begin{align*}
X_q(\dot\Omega) & = \{(\bff, d, \bh, g, \bg) \mid 
\bff \in L_q(\dot\Omega), \enskip d \in W^{2-1/q}_q(S_R), \enskip
\bh \in H^1_q(\Omega)^N, \enskip g \in H^1_q(\dot\Omega), 
\enskip \bg \in L_q(\dot\Omega)^N\}, \\
\CX_q(\dot\Omega) & = \{F=(F_1, F_2, \ldots, F_7) \mid F_1,  F_7 
\in L_q(\dot\Omega)^N, \enskip F_2 \in W^{2-1/q}_q(S_R), \enskip 
F_3 \in L_q(\Omega)^N, \enskip 
F_4 \in H^1_q(\Omega)^N, \\
&\phantom{= \{F=(F_1, F_2, \ldots, F_7) \mid F_1, F_3, F_7 
\in L_q(B_R)^N, a}\,\,
 F_5 \in L_q(\dot\Omega), \enskip
F_6 \in H^1_q(\dot\Omega)\}.
\end{align*}
Then, there exist a constant $k_0 > 0$ and operator families $\CA(ik)$, $\CP(ik)$, and  
$\CH(ik)$ with 
\begin{align*}
\CA(ik) & \in C^1(\BR_{k_0}, \CL(\CX_q(\dot\Omega), H^2_q(\dot\Omega)^N)), \\
\quad 
\CP(ik) &\in C^1(\BR_{k_0}, \CL(\CX_q(\dot\Omega), \dot H^1_q(\dot\Omega))), \\
\quad
\CH(ik) &\in C^1(\BR_{k_0}, \CL(\CX_q(\dot\Omega), W^{3-1/q}_q(S_R)))
\end{align*}
such that for any $(\bff, d, \bh, g, \bg)$ and $k \in \BR_{k_0}$, 
$\bv = \CA(ik)\CF_k$, $\fq = \CP(ik)\CF_k$ and $\eta
= \CH(ik)\CF_k$, where 
$$\CF_k = (\bff, d, (ik)^{1/2}\bh, \bh, (ik)^{1/2}g, g, ik \bg),$$
are unique solutions of equations \eqref{4.22}, and 
\begin{equation}\label{4.23}\begin{aligned}
\CR_{\CL(\CX_q(\dot\Omega), H^{2-m}_q(\dot\Omega)^N)}
(\{(k\pd_k)^\ell((ik)^{m/2}\CA(ik))\mid k \in \BR_{k_0}\}) &\leq r_b, \\
\CR_{\CL(\CX_q(\dot\Omega), L_q(\dot\Omega)^N)}(\{(k\pd_k)^\ell \nabla\CP(ik) \mid 
k \in \BR_{k_0}\}) &\leq r_b, \\
\CR_{\CL(\CX_q(\dot\Omega), W^{3-n-1/q}_q(S_R))}
(\{(k\pd_k)^\ell((ik)^n\CH(ik))\mid k \in \BR_{k_0}\}) &\leq r_b
\end{aligned}\end{equation}
for $\ell=0,1$, $m=0,1,2$ and $n=0,1$ with some constant $r_b$. 
\end{thm}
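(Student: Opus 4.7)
\medskip
\noindent The plan is to construct the $\CR$-solver by the classical localization strategy that underlies Theorem \ref{thm:rs.1} in the one-phase case. Since $\dot\Omega = \Omega_+ \cup \Omega_-$ has a compact interface $S_R$ carrying transmission conditions together with an outer boundary $S$ carrying the no-slip condition, the first step is to cover $\overline{\Omega}$ by finitely many balls of three kinds: balls sitting inside the interior of $\Omega_+$ or $\Omega_-$, balls centered at points of $S_R$, and balls centered at points of $S$. One then introduces a subordinate partition of unity $\{\zeta_j\}$ together with companion cutoffs $\tilde\zeta_j\equiv 1$ on ${\rm supp}\,\zeta_j$, so that each local piece of the problem is reduced, after flattening the interface and the outer boundary, to a model problem on $\BR^N$, on a half-space, or on a flat two-phase configuration $\BR^N_+\cup\BR^N_-$.

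Three model problems must then be solved with $\CR$-bounded solution operators: the whole-space resolvent Stokes system, the half-space resolvent Stokes system with no-slip boundary (needed near $S$), and the flat-interface two-phase resolvent Stokes system on $\BR^N$ carrying the stress-jump transmission, the continuity of velocity, the surface-tension term $-\sigma\Delta_{\BR^{N-1}}\eta$, and the kinematic equation $ik\eta - v_{+,N}=d$. Each admits an explicit representation via tangential Fourier transform plus an ODE solution in the normal direction; the $\CR$-boundedness of the resulting solution operators, including the auxiliary factors $(ik)^{1/2}$, $(ik)^{m/2}$ and the $(k\pd_k)^\ell$ derivatives appearing in \eqref{4.23}, follows from the operator-valued Mikhlin-type calculus supplied by parts (c) and (d) of Lemma \ref{lem:rp.2} applied symbol by symbol. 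The whole-space and half-space pieces are precisely those of \cite{S1,S2,S3,S4}; the flat-interface piece is the contribution of \cite{SS1}.

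The patching step proceeds by the standard perturbation-by-commutators argument: applying $\zeta_j$ to the equations and inserting the local $\CR$-solver $\CA_j(ik)\tilde\zeta_j$ produces, after summation, an approximate parametrix $\CA_0(ik)$ whose defect consists of commutator terms carrying one fewer spatial derivative on $\bv$ and half an order fewer on $\eta$. By choosing $k_0$ sufficiently large these defect terms become $\CR$-bounded with small constant on $\BR_{k_0}$, and a Neumann-series argument in $\CL(\CX_q(\dot\Omega))$ produces the genuine $\CR$-solver $\CA(ik)$ together with the associated height operator $\CH(ik)$. The pressure $\CP(ik)$ is recovered as usual by solving a weak Dirichlet problem whose data involve $\DV(\mu\bD(\bv))$ and the stress transmission jump, and its $\CR$-estimate in $\nabla$-form follows from that of $\CA(ik)$; the normalization $\sum_{\pm}\int_{\Omega_\pm}\fq_\pm\,dx=0$ fixes the remaining additive constant.

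The hard part will be the flat two-phase interface model. Because the surface-tension contribution is third order in $\eta$ while the kinematic equation determines $\eta$ only through the normal trace of $\bv_+$, the resolvent system couples a third-order scalar unknown to second-order vector unknowns across the interface, and the associated Lopatinski--Shapiro determinant has a parabolic-hyperbolic block structure that must be shown to be bounded away from zero uniformly for $k\in\BR_{k_0}$. Extracting the explicit symbol of the inverse and verifying that it, together with all of its $(k\pd_k)^\ell$-derivatives, satisfies the Mikhlin $\CR$-condition is where the bulk of the work lies; once these symbol estimates are in place, the $C^1$ dependence of $\CA(ik)$, $\CP(ik)$, $\CH(ik)$ on $k$ required in \eqref{4.23} is automatic, since each differentiation in $k$ costs only a factor of $|k|^{-1}$ that is absorbed by the multiplier $(k\pd_k)^\ell$.
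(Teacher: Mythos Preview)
Your outline is a faithful sketch of the standard localization--model-problem--Neumann-series construction of an $\CR$-solver, and it is essentially the argument carried out in the reference the paper invokes. Note, however, that the paper itself does not prove Theorem~\ref{thm:rs.21}: it is stated as a direct quotation of Theorem~2.1.4 in Shibata--Saito~\cite{SS1} (see the sentence immediately preceding the theorem), and no independent proof is given. So you are not reproducing the paper's proof but rather sketching the proof that lives in the cited source; from the paper's point of view the ``proof'' is a single citation.

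Two small remarks on your sketch. First, in the two-phase setting the pressure is recovered not via a weak \emph{Dirichlet} problem but via a weak \emph{Neumann} problem on $\Omega$ with a transmission condition for the jump $[[K]]$ across $S_R$ (compare \eqref{4.210}--\eqref{4.210*} later in the paper); the space is $\dot H^1_q(\dot\Omega)$ with mean zero over $\Omega$, which is exactly what fixes the additive constant you mention. Second, the lower-order pieces $\CM\eta$ and the averaging in $\CA\bv_+$ present in \eqref{4.22} do not appear in the flat-interface model problem; they are compact perturbations absorbed into the commutator remainder at the patching stage, which your large-$k_0$ Neumann-series step handles, but it is worth saying so explicitly. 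Apart from these points your plan is sound.
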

\begin{remark} 
\begin{itemize}
\item[\thetag1] Here $f \in L_q(\dot\Omega)$ means that $f_\pm \in L_q(\Omega_\pm)$, 
and $f \in H^1_q(\dot\Omega)$ means that $f_\pm \in H^1_q(\Omega_\pm)$, and 
we set 
$$\|f\|_{L_q(\dot\Omega)} = \sum_\pm\|f_\pm\|_{L_q(\Omega_\pm)}, \quad 
\|f\|_{H^1_q(\dot\Omega)} = \sum_\pm\|f_\pm\|_{H^1_q(\Omega_\pm)}.
$$
Moreover, we define
$$
\dot H^1_q(\dot\Omega) = \Big\{\theta \in H^1_q(\dot\Omega) \mid
\int_{\dot\Omega} \theta \,dx = 0\Big\}.
$$
\item[\thetag2] For $f$ defined on $\dot\Omega$, we set $f_\pm = f|_{\Omega_\pm}$ and for 
$f_\pm$ defined on $\Omega_\pm$, we set $f = f_\pm$ on $\Omega_\pm$. 
The functions $F_1$, $F_2$, $F_3$, $F_4$, $F_5$, $F_6$, and $F_7$ are 
variables corresponding to $\bff$, $d$, $(ik)^{1/2}\bh$, $\bh$, $(ik)^{1/2}g$,
$g$, and $ik\, \bg$, respectively. 
\item[\thetag3]
We define the norm $\|\cdot\|_{\CX_q(\Omega)}$ by setting
$$\|(F_1, \ldots, F_7)\|_{\CX_q(\Omega)}
= \|(F_1, F_5, F_7)\|_{L_q(\dot\Omega)} 
+ \|F_2\|_{W^{2-1/q}_q(S_R)} + \|F_6\|_{H^1_q(\dot\Omega)}
+ \|F_3\|_{L_q(\Omega)} + \|F_4\|_{H^1_q(\Omega))}.
$$
\end{itemize}
\end{remark}
Let $\varphi(ik)$ be a function in $C^\infty(\BR)$ which equals one
for $k \in \BR_{k_0+2}$ and zero for $k \not \in \BR_{k_0+1}$, and 
let $\psi(ik)$ be a  function in $C^\infty(\BR)$ which equals one
for $k \in \BR_{k_0+4}$ and zero for $k \not \in \BR_{k_0+3}$.  
For $f \in \{\bF_\pm, G_\pm, \bG_\pm, D, \bH\}$, we set  
$$f_\psi = \CF^{-1}_\BT[\psi \CF_\BT[f]].$$
We consider the high frequency part of the equations \eqref{4.21}:
\begin{equation}\label{4.25}\begin{aligned}
\pd_t\bu_{\pm\psi} -\DV(\mu\bD(\bu_{\pm\psi})-\fp_{\pm\psi}\bI) = \bF_{\pm\psi}
&&\quad&\text{in $\Omega_\pm\times(0, 2\pi)$}, \\
\dv \bu_{\pm\psi} =G_{\pm\psi} = \dv\bG_{\pm\psi}&&\quad&\text{in $\Omega_\pm\times(0, 2\pi)$}, \\
\pd_t\rho_\psi + \CM\rho_\psi-(\CA\bu_{+\psi})\cdot\bn=D_\psi &&\quad&\text{on $S_R\times(0, 2\pi)$},\\
[[\mu\bD(\bu_\psi)-\fp_\psi\bI)\bn - (\CB_R\rho_\psi)\bn=\bH_\psi &&\quad&
\text{on $S_R\times(0, 2\pi)$}, \\
[[\bu_\psi]]=0 &&\quad&
\text{on $S_R\times(0, 2\pi)$}, \\
\bu_{-\psi} = 0 
&&\quad&
\text{on $S\times(0, 2\pi)$}. \\
\end{aligned}\end{equation}
By Theorem \ref{Weis}, Theorem \ref{ESK}, and the analogue of \eqref{4.4} resulting from \eqref{4.23},
we have immediately the
following theorem. 
\begin{thm}\label{thm:rs.22} Let $1 < p, q < \infty$.  Then, for any functions $\bF$, $G$, $\bG$, 
$D$, and $\bH$ with 
\begin{align*}
\bF & \in L_{p, {\rm per}}((0, 2\pi), L_q(\dot\Omega)^N), \quad 
D \in L_{p, {\rm per}}((0, 2\pi), W^{2-1/q}_q(S_R)), \quad \\
\bH &\in H^{1/2}_{p, {\rm per}}((0, 2\pi), L_q(\Omega)^N) \cap 
L_{p, {\rm per}}((0, 2\pi), H^1_q(\Omega)^N), \\
G & \in H^{1/2}_{p, {\rm per}}((0, 2\pi), L_q(\dot\Omega)) \cap 
L_{p, {\rm per}}((0, 2\pi), H^1_q(\dot\Omega)), \quad 
\bG \in H^1_{p, {\rm per}}((0, 2\pi), L_q(\dot\Omega)^N), 
\end{align*}
We let 
\begin{align*}
\bu_\psi & = \CF^{-1}_\BT[\varphi(ik)\CA(ik)\CF_k(\bF_\psi, D_\psi, \bH_\psi, G_\psi, \bG_\psi)](\cdot, t),
\\
\fp_\psi & = \CF^{-1}_\BT[\varphi(ik)\CP(ik)\CF_k(\bF_\psi, D_\psi, \bH_\psi, G_\psi, \bG_\psi)](\cdot, t),
\\
\rho_\psi & = \CF^{-1}_\BT[\varphi(ik)\CA(ik)\CF_k(\bF_\psi, D_\psi, \bH_\psi, G_\psi, \bG_\psi)](\cdot, t),
\end{align*}
where we have set 
\begin{align*}
\CF_k(\bF_\psi, D_\pi, \bH_\psi, G_\psi, \bG_\psi)
= &\psi(ik)(\CF_\BT[\bF](ik), \CF_\BT[D](ik), (ik)^{1/2}\CF_\BT[\bH](ik), 
\CF_\BT[\bH](ik), \\
&\quad (ik)^{1/2}\CF_\BT[G](ik), \CF_\BT[G](ik), ik\CF_\BT[\bG](ik)).
\end{align*}
Then, $\bu_\psi$, $\fp_\psi$ and $\rho_\psi$ are the unique solutions of  equations \eqref{4.25}, 
which possess the following estimate:
\begin{align*}
&\|\bu_\psi\|_{L_p((0, 2\pi), H^2_q(\dot\Omega))} + 
\|\pd_t\bu_\psi\|_{L_p((0, 2\pi), L_q(\dot\Omega))} + 
\|\nabla \fp_\psi \|_{L_p((0, 2\pi), L_q(\dot\Omega))} \\
 &\hskip0.5cm+ \|\rho_\psi\|_{L_p((0, 2\pi), W^{3-1/q}_q(S_R))} + 
\|\pd_t\rho_\psi\|_{H^1_p((0, 2\pi), W^{2-1/q}_q(S_R))}\\
&\hskip1cm\leq C\{\|\bF_\psi\|_{L_p((0, 2\pi), L_q(\dot\Omega))} 
+ \|D_\psi\|_{L_p((0, 2\pi), W^{2-1/q}_q(S_R))} 
+ \|\pd_t\bG_\psi\|_{L_p((0, 2\pi), L_q(\dot\Omega))} \\
&\hskip1.5cm+ \|\Lambda^{1/2}G_\psi\|_{L_p((0, 2\pi), L_q(\dot\Omega))} 
+ \|G_\psi\|_{L_p((0, 2\pi), H^1_q(\dot\Omega))}\\
&\hskip2cm +\|\Lambda^{1/2}\bH_\psi\|_{L_p((0, 2\pi), L_q(\Omega))}
+ \|\bH_\psi\|_{L_p((0, 2\pi), H^1_q(\Omega))}\}
\end{align*}
for some constant $C > 0$. Here, we have set
$$\Lambda^{1/2}(G_\psi, \bH_\psi) = \CF^{-1}_\BT[(ik)^{1/2}\psi(ik)(\CF_\BT[G](ik), \CF_\BT[\bH]
(ik))].$$
\end{thm}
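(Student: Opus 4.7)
The plan is to repeat almost verbatim the argument already carried out in the one-phase case for Theorem \ref{thm:rs.2}, but now feeding in the two-phase $\CR$-solver $(\CA(ik),\CP(ik),\CH(ik))$ supplied by Theorem \ref{thm:rs.21}. All the structural ingredients are already in place: we have the resolvent estimates \eqref{4.23}, Weis' operator-valued Fourier multiplier theorem (Theorem \ref{Weis}), and the transference theorem (Theorem \ref{ESK}) that lets us pass from $L_p(\BR,X)$ bounds to $L_p(\BT,X)$ bounds. Because $Y = H^{2-m}_q(\dot\Omega)^N$ (and the corresponding spaces for pressure and height) are reflexive UMD spaces, both theorems apply in the present setting.

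First, I would upgrade \eqref{4.23} to the analogue of \eqref{4.4}: multiply the $\CR$-solver by the scalar cut-off $\varphi(ik)$ and note that, by Lemma \ref{lem:rp.2}(c), the family of multiplication operators $\{M_{\varphi(ik)} \mid k\in\BR\}$ is $\CR$-bounded on the target spaces with bound $C\|\varphi\|_{H^1_\infty(\BR)}$, so Lemma \ref{lem:rp.2}(b) gives
\begin{align*}
\CR_{\CL(\CX_q(\dot\Omega),H^{2-m}_q(\dot\Omega)^N)}\bigl(\{(k\pd_k)^\ell((ik)^{m/2}\varphi(ik)\CA(ik))\mid k\in\BR\}\bigr) &\leq C\|\varphi\|_{H^1_\infty}r_b,
\end{align*}
and similarly for $\nabla(\varphi(ik)\CP(ik))$ and $(ik)^n\varphi(ik)\CH(ik)$, for $\ell=0,1$, $m=0,1,2$, $n=0,1$. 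The cut-off $\varphi$ kills the excluded low-frequency window, so these bounds hold on all of $\BR$, which is exactly what Theorem \ref{Weis} requires. Applying Weis' theorem yields boundedness of the Fourier multipliers $T_m[f]=\CF^{-1}[m\CF[f]]$ on $L_p(\BR,\CX_q(\dot\Omega))$ into the appropriate solution space, and Theorem \ref{ESK} then transfers the bound to the periodic Fourier multiplier $\CF^{-1}_\BT[m|_\BT\CF_\BT[f]]$ on $L_p(\BT,\CX_q(\dot\Omega))$.

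Next I would verify that $(\bu_\psi,\fp_\psi,\rho_\psi)$ defined via these periodic multipliers actually solves \eqref{4.25}. This is done Fourier coefficient by Fourier coefficient: since $\varphi(ik)\psi(ik)=\varphi(ik)$, for each $k\in\BZ$ with $\varphi(ik)\ne 0$ (in particular $|k|\geq k_0+2$, where the $\CR$-solver exists) the $k$-th Fourier coefficient of $(\bu_\psi,\fp_\psi,\rho_\psi)$ coincides with $\varphi(ik)$ times the unique solution of \eqref{4.22} with data $\psi(ik)(\CF_\BT[\bF](ik),\ldots)$; for the remaining $k$ the coefficients vanish because $\psi(ik)=0$ there. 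Since $\pd_t$ on $\BT$ corresponds to multiplication by $ik$ on Fourier coefficients, summing over $k$ produces equations \eqref{4.25} with right-hand sides $\bF_\psi,G_\psi,\bG_\psi,D_\psi,\bH_\psi$. Uniqueness is also proved frequency-wise: any two solutions have the same Fourier coefficients by Theorem \ref{thm:rs.21} (for $|k|\geq k_0$) and by observing that for small $|k|$ the multiplier $\varphi(ik)$ is zero so both solutions vanish there.

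The main technical point to be careful about is the treatment of the half-derivative weights $(ik)^{1/2}$ in $\CF_k$, which produce the $\Lambda^{1/2}(G_\psi,\bH_\psi)$ term on the right side of the estimate. Here one uses that the scalar symbol $(i\tau)^{1/2}(1+\tau^2)^{-1/4}\psi(i\tau)$ is bounded together with $\tau\pd_\tau$ of it, so Lemma \ref{lem:rp.2}(c)(d) or Theorem \ref{Weis} with Lemma \ref{lem:rp.2}(c) delivers the bound
\begin{equation*}
\|\Lambda^{1/2}(G_\psi,\bH_\psi)\|_{L_p((0,2\pi),L_q)} \leq C\|(G,\bH)\|_{H^{1/2}_p((0,2\pi),L_q)}.
\end{equation*}
Combining this with the multiplier bounds for $\CA$, $\CP$, $\CH$ and with the trivial $L_p$-bound for $\pd_t\bG_\psi$ gives the stated estimate. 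No genuinely new obstacle appears relative to the one-phase argument; the only change is bookkeeping across the two subdomains $\Omega_\pm$, the transmission data on $S_R$, and the Dirichlet data on $S$, all of which are already encoded in the definition of $\CX_q(\dot\Omega)$.
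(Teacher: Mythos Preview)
Your proposal is correct and follows essentially the same approach as the paper, which simply states that the theorem follows immediately from Theorem \ref{Weis}, Theorem \ref{ESK}, and the analogue of \eqref{4.4} resulting from \eqref{4.23}. You have in fact spelled out more detail than the paper provides, including the verification that the defined functions solve \eqref{4.25} coefficient-by-coefficient and the handling of the $(ik)^{1/2}$ weights, both of which the paper leaves implicit.
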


We now consider the lower frequency part of solutions of equations \eqref{4.21}. 
Namely, we consider equations \eqref{4.22} for $k \in \BR$ with $1 \leq |k| < k_0+4$.
We shall show the following theorem.
\begin{thm}\label{thm:rs.23} Let $1 < q < \infty$ and $k \in \BZ$ 
with $|k| \leq k_0+3$.   Then, for any 
$\bff_\pm \in L_q(\Omega_\pm)^N$, $g_\pm \in H^1_q(\Omega_\pm)$, $d \in W^{2-1/q}_q(S_R)$, 
$\bh \in H^1_q(\Omega)^N$, and $\bg_\pm \in L_q(\Omega_\pm)^N$, problem \eqref{4.22}
admits unique solutions $\bv_\pm \in H^2_q(\Omega_\pm)^N$, 
$\fq_\pm \in H^1_q(\Omega_\pm)$ with $\int_\Omega\fq\,dx=0$, and $\eta \in W^{3-1/q}_q(S_R)$ possessing
the estimate:
\begin{equation}\label{4.6}\begin{aligned}
&\|\bv\|_{H^2_q(\dot\Omega)} + \|\nabla \fq\|_{L_q(\dot\Omega)}
+ \|\eta\|_{W^{3-1/q}_q(S_R)} \\
&\quad 
\leq C(\|\bff\|_{L_q(\dot\Omega)} + \|d\|_{W^{2-1/q}_q(S_R)}
+ \|g\|_{H^1_q(\dot\Omega)} + \|\bg\|_{L_q(\dot\Omega)} + \|\bh\|_{H^1_q(\Omega)})
\end{aligned}\end{equation}
for some constant $C > 0$. 
\end{thm}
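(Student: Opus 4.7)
The argument follows the pattern of Theorem \ref{thm:rs.3}, with the transmission interface on $S_R$ and the no-slip boundary on $S$ replacing the single free boundary. Theorem \ref{thm:rs.21} supplies a solution $(\bv_{\pm,0}, \fq_{\pm,0}, \eta_0)$ at $k = k_0 + 4$ with the stated estimate. Subtracting it reduces the task to solving \eqref{4.22} for $|k| \leq k_0 + 3$ with $g_\pm = 0$, $\bg_\pm = 0$, $\bh = 0$ and new data $\bff_\pm = i(k_0 + 4 - k)\bv_{\pm,0}$, $d = i(k_0 + 4 - k)\eta_0$ in the original spaces. Projecting $\bff$ off gradients via the weak Dirichlet problem on $\Omega$ with Dirichlet trace on $S$ allows us to assume $\bff \in J_q(\dot\Omega)$.

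Next, eliminate the pressure. For $(\bv, \eta) \in H^2_q(\dot\Omega)^N \times W^{3-1/q}_q(S_R)$ with $[[\bv]] = 0$ on $S_R$ and $\bv_-|_S = 0$, define $K(\bv, \eta) \in \dot H^1_q(\dot\Omega)$ as the unique solution of the weak Dirichlet problem on $\Omega$ with transmission data $[[K]] = [[\langle \mu \bD(\bv) \bn, \bn \rangle]] - \sigma \CB_R \eta$ on $S_R$, normalized by $\int_\Omega K\,dx = 0$. On
\begin{equation*}
\CH_q = \{(\bw, \zeta) : \bw \in J_q(\dot\Omega),\ \bw_-|_S = 0,\ [[\bw]]|_{S_R} = 0,\ \zeta \in W^{2-1/q}_q(S_R)\}
\end{equation*}
with dense domain $\CD_q$ of pairs satisfying $\bw \in H^2_q(\dot\Omega)^N$, $\zeta \in W^{3-1/q}_q(S_R)$, and $\Pi_0[[\mu\bD(\bw)]]\bn = 0$ on $S_R$, set
\begin{equation*}
\bA(\bw, \zeta) = \bigl(\DV(\mu\bD(\bw) - K(\bw, \zeta)\bI),\ -\CM\zeta + \CA\bw_+\cdot\bn\bigr).
\end{equation*}
The problem becomes $(ik - \bA)U = F$. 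Since $\Omega$ is bounded, $(i(k_0+4) - \bA)^{-1}$ is compact on $\CH_q$ by Rellich--Kondrachov, so Fredholm alternative reduces solvability to uniqueness.

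For uniqueness, let $(\bw, \zeta)$ solve the homogeneous system. Testing the kinematic equation against $1$ and $x_j$ on $S_R$, using $\dv\bw_+ = 0$ in $B_R$ together with $\int_{S_R}\bn\,d\sigma = 0$, yields $(\zeta, 1)_{S_R} = 0$ and $(\zeta, x_j)_{S_R} = 0$; hence $\CM\zeta = 0$ and $(\CB_R\zeta, x_j)_{S_R} = 0$. In the $L_2$ setting, test the momentum equation by $\overline{\bw}_\pm$ on $\Omega_\pm$ and sum; the pressure term drops since $\dv\bw_\pm = 0$, the $S$-boundary term vanishes by no-slip, and the $S_R$-interface term collapses via $[[\bw]] = 0$ and the homogeneous transmission condition to $-\sigma(\CB_R\zeta, \bn\cdot\bw_+)_{S_R}$. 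Substituting the kinematic equation and using the moment identities produces
\begin{equation*}
-\sigma ik(\CB_R\zeta, \zeta)_{S_R} + \tfrac{1}{2}\sum_\pm \mu_\pm \|\bD(\bw_\pm)\|_{L_2(\Omega_\pm)}^2 = 0.
\end{equation*}
The real part forces $\bD(\bw_\pm) = 0$. Since $\bw_-|_S = 0$ on the non-degenerate closed surface $S$, the rigid motion $\bw_-$ vanishes identically; then $[[\bw]] = 0$ on $S_R$ forces the rigid motion $\bw_+$ on $B_R$ to vanish on $S_R$, hence $\bw_+ \equiv 0$. The momentum equation reduces to $\nabla K = 0$, so $K = c_\pm$ on $\Omega_\pm$; the transmission jump $c_+ - c_- = -\sigma\CB_R\zeta$ together with $(\CB_R\zeta, 1)_{S_R} = 0$ forces $c_+ = c_-$, and the normalization $\int_\Omega K\,dx = 0$ then gives $c_\pm = 0$. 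Hence $\CB_R\zeta = 0$, which combined with the moment constraints yields $\zeta = 0$, since $\ker\CB_R$ on $S_R$ is spanned by $\{y_j\}$.

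The case $1 < q < 2$ is handled by the duality argument of Theorem \ref{thm:rs.3}: for any $\bff \in J_{q'}(\dot\Omega)$, solve the adjoint problem at parameter $-ik$, test against $\bw$, and exploit the symmetry of $\CB_R$ on the codimension-$(N{+}1)$ subspace $\{\zeta : (\zeta, 1)_{S_R} = (\zeta, x_j)_{S_R} = 0\}$ to conclude $(\bw, \bff)_{\dot\Omega} = 0$, whence $\bw = 0$ and then $\zeta = 0$. The principal obstacle is the interface bookkeeping: verifying that pressure elimination is compatible with both the transmission condition (so that $\fq_\pm = K_\pm$ once divergence-freeness is established) and the mean-zero normalization, and confirming that a rigid motion on $\Omega_-$ vanishing on $S$ must be trivial.
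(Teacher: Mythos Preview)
Your overall architecture matches the paper's: reduce via the $k_0+4$ solution from Theorem \ref{thm:rs.21}, pass to a pressure-free formulation, invoke Fredholm, and prove $L_2$ uniqueness by the energy identity combined with the moment relations $(\zeta,1)_{S_R}=(\zeta,x_j)_{S_R}=0$. The rigidity argument ($\bD(\bw_\pm)=0$, $\bw_-|_S=0$, $[[\bw]]=0 \Rightarrow \bw=0$) and the endgame for $\zeta$ are fine.

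There is, however, a genuine gap in the pressure elimination. You carry over the one-phase recipe and eliminate $\fq$ via a \emph{weak Dirichlet problem} on $\Omega$ ``with Dirichlet trace on $S$''. In the two-phase geometry this is the wrong variational class. On $S$ only the no-slip condition $\bv_-=0$ is imposed; there is no stress condition, hence no boundary data for the pressure on $S$. The paper defines $K(\bv,\eta)\in \dot H^1_q(\dot\Omega)$ as the solution of the \emph{weak Neumann} problem
\[
(\nabla K,\nabla\varphi)_{\dot\Omega}=(\DV(\mu\bD(\bv))-\nabla\dv\bv,\nabla\varphi)_{\dot\Omega}
\quad\text{for all }\varphi\in \dot H^1_{q'}(\Omega),
\]
with the transmission datum $[[K]]=\langle[[\mu\bD(\bv)]]\bn,\bn\rangle-\sigma\CB_R\eta-[[\dv\bv]]$ on $S_R$ and the mean-zero normalisation. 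Correspondingly, $J_q(\dot\Omega)$ is the annihilator of $\nabla\dot H^1_{q'}(\Omega)$, and the Helmholtz projection of $\bff$ is obtained by solving a weak Neumann problem, not a Dirichlet problem with trace on $S$. If you actually impose a Dirichlet value for $K$ on $S$, the equivalence between the reduced system and the original one fails: given a solution $(\bv,\fq,\eta)$ of \eqref{4.22}, you can only conclude $\nabla(\fq-K)=0$ after testing with $\varphi\in\dot H^1_{q'}(\Omega)$, and then $\fq-K$ is constant on each component; a spurious Dirichlet constraint on $S$ would prevent identifying $\fq$ with $K$. The mean-zero normalisation you invoke is precisely the Neumann compatibility device, and it is inconsistent with a simultaneous Dirichlet condition on $S$.

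A minor slip: your energy identity omits the term $ik\|\bw\|_{L_2(\dot\Omega)}^2$. The full identity reads
\[
ik\|\bw\|_{L_2(\dot\Omega)}^2-\sigma ik(\CB_R\zeta,\zeta)_{S_R}
+\tfrac12\sum_{\pm}\mu_\pm\|\bD(\bw_\pm)\|_{L_2(\Omega_\pm)}^2=0,
\]
and taking the real part still gives $\bD(\bw_\pm)=0$, so your conclusion survives; but the stated identity is incomplete.
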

\begin{proof} The strategy of the proof is the same as that in 
Theorem \ref{thm:rs.3}.  The only difference is the reduced problem.
First, we can reduce equations \eqref{4.22} to equations: 
\begin{equation}\label{4.23}\begin{aligned}
ik\bv -\DV(\mu\bD(\bv)-\fp\bI) = \bff &&\quad&\text{in $\dot\Omega$}, \\
\dv \bv =0&&\quad&\text{in $\dot\Omega$}, \\
ik\rho + \CM\rho-(\CA\bv_+)\cdot\bn=d&&\quad&\text{on $S_R$},\\
[[\mu\bD(\bv)-\fp\bI]]\bn - (\CB_R\rho)\bn=0&&\quad&
\text{on $S_R$}, \\
[[\bv]]=0&&\quad&
\text{on $S_R$}, \\
\bv_- = 0&&\quad&\text{on $S$}.
\end{aligned}\end{equation}
For any $\bv_\pm \in H^2_q(\Omega_\pm)^N$ and $\rho\in W^{3-1/q}_q(S_R)$, 
let $K=K(\bv, \rho) \in \dot H^1_q(\dot\Omega)$ be the unique solution of the weak Neumann problem:
\begin{equation}\label{4.210}
(\nabla K, \nabla\varphi)_{\dot\Omega} = (\DV(\mu\bD(\bv))-\nabla\dv\bv, \nabla\varphi)_{\dot\Omega}
\quad\text{for any $\varphi \in \dot H^1_{q'}(\Omega)$}
\end{equation}
subject to the transmission condition:
\begin{equation}\label{4.210*}
[[K]]=<[[\mu\bD(\bv)]]\bn, \bn> - \sigma(\CB_R\zeta)\bn - [[\dv\bv]]
\quad\text{on $S_R$},
\end{equation}
where $\mu$ is piecewise constant defined by $\mu|_{\Omega_\pm} = \mu_\pm$. 
Here and in the following, $\dot H^1_q(\Omega)$ is defined by
setting
$$\dot H^1_q(\Omega) 
= \Big\{\varphi \in H^1_q(\Omega) \mid \int_\Omega \varphi\, dx = 0\Big\}.
$$
The reduced problem corresponding to equations \eqref{4.23} is 
\begin{equation}\label{4.29}\begin{aligned}
ik\bv - \DV(\mu\bD(\bv) - K(\bv, \rho)\bI) = \bff&&\quad&\text{in $\dot\Omega$}, \\
ik\rho + \CM\rho-(\CA\bv_+)\cdot\bn= d&&\quad&\text{on $S_R$}, \\
[[\mu\bD(\bv) - K(\bv, \rho)\bI]]\bn - \sigma(\CB_R\rho)\bn= 0&&\quad
&\text{on $S_R$}, \\
[[\bv]]=0 &&\quad
&\text{on $S_R$}, \\
\bv_-=0 &&\quad
&\text{on $S$}. 
\end{aligned}
\end{equation}
Let $J_q(\dot\Omega)$ be the solenoidal space defined by setting
$$J_q(\dot\Omega) = \{\bu \in L_q(\dot\Omega) \mid 
(\bu, \nabla\varphi)_{\dot\Omega} = 0 \quad\text{for any
$\varphi \in \dot H^1_{q'}(\Omega)$}\}.
$$
For any $\bff \in J_q(\dot\Omega)$ and $d \in W^{2-1/q}_q(S_R)$, problems
\eqref{4.23} and \eqref{4.29} are equivalent.  In fact, 
if problem \eqref{4.23} admits unique solutions 
$\bv \in H^2_q(\dot\Omega)^N$, $\fp \in \dot H^1_q(\dot\Omega)$ and 
$\rho \in W^{3-1/q}_q(S_R)$, then using the divergence theorem of Gauss
and noting that $[[\varphi]]=0$ on $S_R$ gives that
for any $\varphi \in \dot H^1_{q'}(\Omega)$, 
\begin{align*}
0 = (\bff, \nabla\varphi)_{\dot\Omega} = ik(\bv, \nabla\varphi)_{\dot\Omega} 
-(\nabla\dv\bv, \nabla\varphi)_{\dot\Omega} 
+(\nabla(\fp - K(\bv, \rho)), \nabla\varphi)_{\dot\Omega}
 = (\nabla(\fp - K(\bv, \rho)), \nabla\varphi)_{\dot\Omega}
\end{align*}
because $\dv \bv=0$ on $\dot\Omega$.  Moreover, the transmission conditions
in \eqref{4.23} and \eqref{4.210*} gives that
$$[[\fp-K(\bv, \rho)]] = [[\dv\bv]]=0 \quad\text{on $S_R$}.
$$
Thus, the uniqueness of the weak Neumann problem in $\dot H^1_q(\dot\Omega)$ yields that
$\fp - K(\bv, \rho) = 0$ in $\Omega$. Thus, $\bv$ and $\rho$ satisfy
the equations \eqref{4.29}.  

Conversely, if $\bv\in H^2_q(\dot\Omega)^N$ and $\rho \in W^{3-1/q}_q(S_R)$
satisfy equations \eqref{4.29}, then the divergence theorem of Gauss gives that 
for any $\varphi \in \dot H^1_{q'}(\Omega)$ we have
\begin{align*}
0 = (\bff, \nabla\varphi)_{\dot\Omega} = ik(\bv, \nabla\varphi)_{\dot\Omega} 
-(\nabla\dv\bv, \nabla\varphi)_{\dot\Omega}
 = -\{ik(\dv\bv, \varphi)_{\dot\Omega} +(\nabla\dv\bv, \nabla\varphi)_{\dot\Omega}\}.
\end{align*}
Moreover, the transmission conditions in \eqref{4.29} and \eqref{4.210*} give that
$$[[\dv \bv]]= <[[\mu\bD(\bv)]]\bn, \bn> - \sigma(\CB_R\zeta) - [[K]]=0 
\quad\text{on $S_R$}.
$$
Thus, the uniqueness of this weak Neumann problem yields that
$\dv \bv=c$ in $\dot\Omega$ for some global constant $c$.
Now the divergence theorem of Gauss and the boundary conditions in \eqref{4.29} 
yield $c=0$, that is, $\dv\bv=0$, which shows that 
$\bv$, $\fp = K(\bv, \rho)$ and $\rho$ satisfy equations \eqref{4.23}. 

Employing the same argument as that in the proof of Theorem
\ref{thm:rs.3}, we see that to prove Theorem \ref{thm:rs.23}, it is
sufficient to prove the uniqueness of solutions to equations
\eqref{4.29} in the $L_2$ framework.  Thus, we choose 
 $\bv \in H^2_2(\dot\Omega)^N$ and $\rho \in 
W^{5/2}_2(S_R)$ be solutions of the homogeneous equations: 
\begin{equation}\label{4.230}\begin{aligned}
ik\bv - \DV(\mu\bD(\bv) - K(\bv, \rho)\bI) = 0&&\quad&\text{in $\dot\Omega$}, \\
ik\rho + \CM\rho-(\CA\bv_+)\cdot\bn= 0&&\quad&\text{on $S_R$}, \\
[[\mu\bD(\bv) - K(\bv, \rho)\bI]]\bn - \sigma(\CB_R\rho)\bn= 0&&\quad
&\text{on $S_R$}, \\
[[\bv]]=0 &&\quad
&\text{on $S_R$}, \\
\bv_-=0 &&\quad
&\text{on $S$}, 
\end{aligned}
\end{equation}
and we shall show that $\bv=0$ and $\rho=0$.  Notice that 
$\dv\bv= 0$ on $\dot\Omega$.  Moreover, by $[[\bv]]=0$, 
we have $\bv \in H^1_q(\Omega) \cap H^2_q(\dot\Omega)$. Integrating the second equation
in \eqref{4.230} over $S_R$ and using the divergence theorem of Gauss on
$\Omega_+ = B_R$ gives that
\begin{align*}
0 =ik(\rho, 1)_{S_R} +  \int_{S_R}\rho\,d\omega |S_R| - \int_{B_R}\dv(\bv_+-\frac{1}{|B_R|}\int_{B_R}\bv_+\,dy)\,dx
= (ik+|S_R|)\int_{S_R}\rho\,d\omega |S_R|
\end{align*}
because $\dv\bv_+ = 0$ on $B_R$, and so $(\rho, 1)_{S_R}=0$.  Moreover, multiplying the second equation
in \eqref{4.230} by $x_j$ and integrating over $S_R$, similar arguments lead to
\begin{align*}
0 &=ik(\rho, x_j)_{S_R} +  \int_{S_R}\rho x_j\,d\omega(x_j, x_j)_{S_R}
- \int_{B_R}\dv\{x_j(\bv_+(x)-\frac{1}{|B_R|}\int_{B_R}\bv_+\,dy)\}\,dx \\
& =ik(\rho, x_j)_{S_R} +  \int_{S_R}\rho x_j\,d\omega(x_j, x_j)_{S_R}
-\int_{B_R}(v_{+j}(x) - \frac{1}{|B_R|}\int_{B_R}v_{+j}\,dy)\,dx\\
& = ik(\rho, x_j)_{S_R} +  \int_{S_R}\rho x_j\,d\omega(x_j, x_j)_{S_R},
\end{align*}
because $(1, x_j)_{S_R}=0$, and $(x_k, x_j)_{S_R} = 0$ for $j\not=k$. Since 
$(x_j, x_j)_{S_R} = (R^2/N)|S_R| > 0$, we have $(\rho, x_j) = 0$.
Summing up, we have proved 
\begin{equation}\label{null:21} 
(\rho, 1)_{S_R} = 0, \quad(\rho, x_j)_{S_R} = 0 \quad(j=1\ldots, N).
\end{equation}
In particular, $\CM\rho=0$.

We now prove that $\bv = 0$.  
Multiplying the first equation of \eqref{4.230} with $\bv$ and integrating the 
resultant formula over $\dot\Omega$ and using the divergence theorem of Gauss gives
that  
\begin{align*}
0 = ik\|\bv\|_{L_2(\dot\Omega)}^2 - \sigma(\CB_R\rho, \bn\cdot\bv)_{S_R} + \frac{\mu}{2}
\|\bD(\bv)\|_{L_2(\dot\Omega)}^2,
\end{align*}
because $\dv\bv = 0$ in $\dot\Omega$.  By the second equation of \eqref{4.230}
with $\CM\rho=0$, we have
$$\sigma(\CB_R\rho, \bn\cdot\bv)_{S_R}
= \sigma (\CB_R\rho, ik\rho)_{S_R}
+ \sum_{k=1}^N\frac{1}{|B_R|}\int_{B_R}w_j\,dt(\CB_R\rho, R^{-1}x_j)_{S_R}
$$
where we have used $\bn =R^{-1}x= R^{-1}(x_1,\ldots, x_N)$ for $x \in S_R$.
This also yields 
$$(\CB_R\rho, x_j)_{S_R} = (\rho, (\Delta_{S_R}+\frac{N-1}{R^2})x_j)_{S_R} =0.
$$
Moreover, since $\rho$ satisfies \eqref{null:21}, we know that
$$-(\CB_R\rho, \rho)_{S_R} \geq c\|\rho\|_{L_2(S_R)}^2
$$
for some positive constant $c$, and therefore we have $\bD(\bv)=0$.
Since $\bv \in H^1_q(\Omega)$ and $\bv=0$
on $S_-$, we have $\bv=0$. 

Finally, the first equation of \eqref{4.230} yields that
$\nabla K(\bv, \rho) = 0$, which shows that 
$K(\bv, \rho)$ is constant in $\dot\Omega$.  Thus, 
$[[K(\bv, \rho)]]$ is constant. Integrating the third equation of  \eqref{4.230} yields that
$$
[[K(\bv, \rho)]]\int_{S_R}\,d\omega = \sigma(\Delta_{S_R}\rho,1)_{S_R}
+ \frac{N-1}{R^2}(\rho, 1)_{S_R} = 0
$$
where we have used \eqref{null:21}.  In particular, $K(\bv, \rho)$ is a constant globally
in $\Omega$.  Finally, we have $\CB_R\rho = 0$ on $S_R$, which, combined with
\eqref{null:21} leads to $\rho=0$.  This completes the proof of uniqueness for equations
\eqref{4.29} in the $L_2$ framework.  Therefore, we have proved 
Theorem \ref{thm:rs.23}.
\end{proof}



{\bf Proof of Theorem \ref{thm:linear.main2}.} 
Employing the same argument as in the proof of Theorem \ref{thm:linear.main1}
and using Theorem \ref{thm:rs.22} and Theorem \ref{thm:rs.23}, we can prove 
Theorem \ref{thm:linear.main2}. We may omit the detailed proof. 

\section{Proofs of main results} \label{sec:5}

In this section, we shall prove Theorem \ref{main:thm1}.  
The proof of Theorem \ref{main:thm2} is parallel to that of Theorem \ref{main:thm1},
and so we may omit it.  We prove Theorem \ref{main:thm1}
with the help of the usual Banach fixed-point argument, and we define
an underlying space $\CI_\epsilon$ with some small constant $\epsilon > 0$
determined later by setting
\begin{align}
\CI_\epsilon = \{&(\bv, h) \mid\quad 
\bv \in L_{p, {\rm per}}((0, 2\pi), H^2_q(B_R)^N) \cap 
H^1_{p, {\rm per}}((0, 2\pi), L_q(B_R)^N), \nonumber \\
& h \in L_{p, {\rm per}}((0, 2\pi), W^{3-1/q}_q(S_R)) \cap 
H^1_{p, {\rm per}}((0, 2\pi), W^{2-1/q}_q(S_R))
\cap H^1_{\infty, {\rm per}}((0, 2\pi), W^{1-1/q}_q(S_R)), \nonumber \\
&\sup_{t \in (0, 2\pi)}\|H_h(\cdot, t)\|_{H^1_\infty(B_R)} \leq \delta, \quad 
 E(\bv, h ) \leq \epsilon \label{space:1}\},
\end{align}
where we have set
\begin{align*}
E(\bv, h) &= \|\bv\|_{L_p((0, 2\pi), H^2_q(B_R))} + \|\bv\|_{H^1_p((0, 2\pi), L^2_q(B_R))} 
\\
&\,+ \|h\|_{L_p((0, 2\pi), W^{3-1/q}_q(B_R))}  + 
\|h\|_{H^1_p((0, 2\pi), W^{2-1/q}_q(B_R))}
+\|\pd_th\|_{L_\infty((0, 2\pi), W^{1-1/q}_q(S_R))}. 
\end{align*} 
In view of \eqref{bary:3}, we define $\xi(t)$ by setting 
\begin{equation}\label{xi:1}
\xi(t) = \int^t_0\xi'(s)\,ds + c = \frac{1}{|B_R|} \int^t_0\int_{B_R}\bv(x, s)(1+ J_0(x,s))\,dxds + c
\end{equation}
where $c$ is a constant for which 
\begin{equation}\label{xi:2}
\int^{2\pi}_0 \xi(s)\,ds = 0, \quad\text{that is}, \quad 
c = -\frac{1}{2\pi|B_R|}\int^{2\pi}_0\Bigl(\int^t_0\int_{B_R}(\bv(x, s)(1+ J_0(x,s))\,dxds\Bigr)\,dt.
\end{equation}
We choose $\delta > 0$ so small that the map $x =\Phi(y, t)= y + \Psi(y, t)$ with
$\Psi(y, t)=\Psi_h(y, t) = R^{-1}H_h(y, t)y + \xi(t)$ is one to one.  In particular, we may assume
that $\delta > 0$ and the inverse map: $y = \Xi(y, t)$ is well-defined and has the same regularity
property as $\Phi(y, t)$.  In particular, we may assume that 
\begin{equation}\label{non:0}
\Xi(D) \subset B_R.
\end{equation}

Since $\epsilon > 0$ will be chosen small eventually, we may assume that $0 < \epsilon < 1$,
 and so for example, we estimate $\epsilon^2 < \epsilon$ if necessary. Let $(\bv, h)
\in \CI_\epsilon$ and let $\bu$ and $\rho$ be solutions of linearized equations: 
\begin{equation}\label{5.1}\left\{\begin{aligned}
&\pd_t\bu + \CL\bu_S- \DV(\mu(\bD(\bu) - \fp\bI) = \bG + \bF(\bv, h)
&\quad&\text{in $B_R\times(0, 2\pi)$}, \\
&\dv \bu = g(\bv, h) = \dv \bg(\bv, h)
&\quad&\text{in $B_R\times(0, 2\pi)$}, \\
&\pd_t\rho + \CM\rho 
-\CA\bu\cdot\bn
= \tilde d(\bv, h) 
&\quad&\text{on $S_R\times(0, 2\pi)$}, \\
&(\mu\bD(\bu)-\fp\bI)\bn - (\CB_R\rho) \bn
= \bh(\bv, h)
&\quad&\text{on $S_R\times(0, 2\pi)$}.
\end{aligned}\right.\end{equation}
In view of Theorem \ref{thm:linear.main1}, we shall show that
\begin{equation}\label{main:ne:1}\begin{aligned}
&\|\bF(\bv, h)\|_{L_p((0, 2\pi), L_q(B_R))} +
\|\tilde d(\bv, h)\|_{L_p((0, 2\pi), W^{2-1/q}_q(S_R))} 
+ \|(g(\bv, h), \bh(\bv, h)\|_{H^{1/2}_p((0, 2\pi), L_q(B_R))} \\
&\quad + \|(g(\bv, h), \bh(\bv, h)\|_{L_p((0, 2\pi), H^1_q(B_R))} 
+ \|\pd_t\bg(\bv, h)\|_{L_p((0, 2\pi), L_q(B_R))}
\leq C\epsilon^2,
\end{aligned}\end{equation}
for some constant $C > 0$ independent of $\epsilon > 0$.  In the following,
$C$ denotes generic constants independent of $\epsilon > 0$, the value of
which may change from line to line.  Before starting with the estimates of the nonlinear
terms, we summarize some inequalities which are useful for our estimations.
The following inequalities follow from  Sobolev's inequality and the estimate of the boundary
trace: 
\begin{equation}\label{sob:1}\begin{aligned}
\|f\|_{L_\infty(B_R)} & \leq C\|f\|_{H^1_q(B_R)}, \\
\|fg\|_{H^1_q(B_R)} & \leq C\|f\|_{H^1_q(B_R)}\|g\|_{H^1_q(B_R)},  \\
\|fg\|_{H^2_q(B_R)} & \leq C(\|f\|_{H^2_q(B_R)}\|g\|_{H^1_q(B_R)} 
+ \|f\|_{H^1_q(B_R)}\|g\|_{H^2_q(B_R)}), \\
\|fg\|_{W^{1-1/q}_q(S_R)} & \leq C\|f\|_{W^{1-1/q}_q(S_R)}\|g\|_{W^{1-1/q}_q(S_R)},  \\
\|fg\|_{W^{2-1/q}_q(S_R)} & \leq C(\|f\|_{W^{2-1/q}_q(S_R)}\|g\|_{W^{1-1/q}_q(S_R)} 
+ \|f\|_{W^{1-1/q}_q(S_R)}\|g\|_{W^{2-1/q}_q(S_R)})
\end{aligned}\end{equation}
for $N < q < \infty$ with some constant $C$.  The following inequalities
follow from real interpolation theorem and the periodicity of functions, 
which will be used to estimate
the $L_\infty$ norm with respect to the time variable of lower order regularity
terms with respect to the space variable $x$: 
\begin{equation}\label{real:1}\begin{aligned}
\|\bv\|_{L_\infty((0, 2\pi), B^{2(1-1/q)}_{q,p}(B_R))}
& \leq C(\|\bv\|_{L_p((0, 2\pi), H^2_q(B_R))} 
+ \|\pd_t\bv\|_{L_p((0, 2\pi), L_q(B_R))}),  \\
\|h\|_{L_\infty((0, 2\pi), B^{3-1/p-1/q}_{q,p}(S_R))}
& \leq C(\|h\|_{L_p((0, 2\pi), W^{3-1/q}_q(S_R))} 
+ \|\pd_th\|_{L_p((0, 2\pi), W^{2-1/q}_q(S_R))}).
\end{aligned}\end{equation}
In fact, to obtain \eqref{real:1} we use the following well-known result: 
Let $X$ and $Y$ be two Banach spaces such that $Y$ is continuously embedded
into $X$, and then $C([0, \infty), (X, Y)_{1-1/p,p})$
is continuously embedded into $H^1_p((0, \infty), X) \cap
L_p((0, \infty), Y)$ and 
$$\|f\|_{L_\infty((0, \infty), (X, Y)_{1-1/p,p})}
\leq \|f\|_{L_p((0, \infty), Y)} + \|f\|_{H^1_p((0, \infty), X)}.
$$
For its proof, we refer to \cite{Lu, Tri}.  

We start with the estimate of $\bF(\bv, h)$. 
From \eqref{form:f}, we have
\begin{align*}
&\|\bF_1(\bv, h)\|_{L_q(B_R)}  \leq C\{\|\bv\|_{L_\infty(B_R)}\|\nabla\bv\|_{L_q(B_R)}
+ \|\pd_t\Psi_h\|_{L_\infty(B_R)}\|\nabla\bv\|_{L_q(B_R)} \\
&\quad + \|\nabla\Psi_h\|_{L_\infty(B_R)}\|\pd_t\bv\|_{L_q(B_R)}
+ \|\nabla\Psi_h\|_{L_\infty(B_R)}\|\nabla^2\bv\|_{L_q(B_R)}
+ \|\nabla^2\Psi_h\|_{L_q(B_R)}\|\nabla\bv\|_{L_\infty(B_R)}).
\end{align*}
By \eqref{sob:1} and  \eqref{ext:1}, we have
\begin{align*}
\|\bF_1(\bv, h)\|_{L_p((0, 2\pi), L_q(B_R))}&
 \leq C\{\|\bv\|_{L_\infty((0, 2\pi), H^1_q(B_R))}
\|\bv\|_{L_p((0, 2\pi), H^1_q(B_R))} \\
&+\|\pd_th\|_{L_p((0, 2\pi), W^{1-1/q}_q(S_R))}\|\bv\|_{L_\infty((0, 2\pi), H^1_q(B_R))}\\
&+ \|h\|_{L_\infty((0, 2\pi), W^{2-1/q}_q(S_R))}(\|\pd_t\bv\|_{L_q((0, 2\pi), L_q(B_R))}
+ \|\bv\|_{L_p((0, 2\pi), H^2_q(B_R))}),
\end{align*}
which, combined with \eqref{real:1} and \eqref{space:1}, leads to 
\begin{equation}\label{non:1}
\|\bF_1(\bv, h)\|_{L_p((0, 2\pi), L_q(B_R))} \leq C\epsilon^2,
\end{equation}
because $1 < 2(1-1/p)$ and $2-1/q < 3-1/p-1/q$.  From \eqref{form:f*},
it follows that
\begin{align*}
&\|\bF_2(\bv, h)(\cdot, t)\|_{L_q(B_R)} \\
&\leq C\int^{2\pi}_0\|\bv(\cdot,t)\|_{L_q(B_R)}(\|J_0(\cdot,t)\|_{L_\infty(B_R)}
+\|\Psi(\cdot, t)\|_{L_\infty(B_R)}(1+\|J_0(\cdot,t)\|_{L_\infty(B_R)}))\,dt \\
&\quad+ \int^{2\pi}_0\|\bv(\cdot, t)\|_{L_q(B_R)}(1+\|\Psi(\cdot, t)\|_{L_\infty(B_R)})
(1+\|J_0(\cdot,t)\|_{L_\infty(B_R)})\,dt\|\Psi(\cdot, t)\|_{L_q(B_R)} \\
&\quad + \|\nabla\Psi(\cdot, t)\|_{L_q(B_R)}\int^{2\pi}_0
\|\bv(\cdot, t)\|_{L_q(B_R)}(1 + \|\Psi(\cdot, t)\|_{L_\infty(B_R)})
(1 + \|J_0(\cdot,t)\|_{L_\infty(B_R)})\,dydt\\
&\qquad\qquad\times(1+\|\Psi(\cdot, t)\|_{L_\infty(B_R)}).
\end{align*}

To estimate $\bF_2(\bv, h)$, we recall 
$$
J_0(y,t) = \det\Bigl(\delta_{ij} + R^{-1}\frac{\pd}{\pd y_j}H_h(y, t)y_i\Bigr) - 1
$$
and that 
$\Psi(y, t) = R^{-1}H_h(y, t)y + \xi(t)$, where $\xi(t)$ is given by 
\begin{equation}\label{xi:3} \begin{aligned}
\xi(t)& = \int^t_0\frac{1}{|B_R|}\int_{B_R}(\bv(y, s)(1 + J_0(y,s))\,dyds + c, \\
 c&=-\int^{2\pi}_0\int^t_0\frac{1}{|B_R|}\int_{B_R}(\bv(y, s)(1 + J_0(y,s))\,dydsdt. 
\end{aligned}\end{equation}
By \eqref{sob:1} and \eqref{ext:1}
we obtain
\begin{equation}\label{5.2}\begin{aligned}
\|H_h(\cdot, t)\|_{L_\infty(B_R)} \leq C\|h(\cdot, t)\|_{W^{1-1/q}_q(S_R)}
\leq C\epsilon, \\
\|\nabla H_h(\cdot, t)\|_{L_\infty(B_R)} \leq C\|h(\cdot, t)\|_{W^{2-1/q}_q(S_R)}
\leq C\epsilon, 
\end{aligned}\end{equation} 
By \eqref{sob:1}, \eqref{ext:1}, \eqref{real:1}, the fact that $2-1/q < 3-1/p-1/q$, and 
\eqref{space:1}, we have
\begin{equation}\label{5.3}\begin{aligned}
\|J_0(\cdot,t)\|_{L_\infty(B_R)}& \leq C\|\nabla H_h(\cdot, t)\|_{L_\infty(B_R)}(1 + 
\|\nabla H_h(\cdot, t)\|_{L_\infty(B_R)})^{N-1} \\
&\leq C\|h(\cdot, t)\|_{W^{2-1/q}_q(S_R)}(1+
\|h(\cdot, t)\|_{W^{2-1/q}_q(S_R)})^{N-1} \\
&\leq C\epsilon. 
\end{aligned}\end{equation}
From \eqref{xi:3} and \eqref{space:1}, it follows that 
\begin{equation}\label{5.4}
|\xi(t)| \leq C\|\bv\|_{L_p((0, 2\pi), L_q(B_R))} \leq C\epsilon.
\end{equation}
In particular, by \eqref{5.2} and  \eqref{5.4}, we have
\begin{equation}\label{5.5}
\|\Psi(\cdot, t)\|_{L_\infty(B_R)} \leq C\epsilon, 
\quad \|\nabla\Psi(\cdot, t)\|_{L_\infty(B_R)} \leq C\epsilon.
\end{equation}
Combining \eqref{space:1} and \eqref{5.5}  gives that
$$
\|\bF_2(\bv, h)\|_{L_p((0, 2\pi), L_q(B_R))}
 \leq C\epsilon\|\bv\|_{L_p((0, 2\pi), L_q(B_R))}
\leq C\epsilon^2,
$$
which, combined with \eqref{non:1}, leads to 
\begin{equation}\label{non:2}
\|\bF(\bv, h)\|_{L_p((0, 2\pi), L_q(B_R))}
\leq C\epsilon^2.
\end{equation}
By \eqref{non:0} and \eqref{5.5}, we have 
\begin{equation}\label{non:3}
\|\bG\|_{L_p((0, 2\pi), L_q(B_R))} \leq C\|\bff\|_{L_p((0, 2\pi), L_q(D))}.
\end{equation}

We next estimate $\tilde d(\bv, h)$.  By \eqref{normal:3.1} and \eqref{space:1}, 
\begin{align*}
\|\bn_t-\bn\|_{W^{1-1/q}_q(S_R)} & \leq C\|H_h(\cdot, t)\|_{H^2_q(B_R)}
\leq C\epsilon, \\
\|\bn_t-\bn\|_{W^{2-1/q}_q(S_R)} &\leq C(\|H_h(\cdot, t)\|_{H^3_q(B_R)}
+ \|H_h(\cdot, t)\|_{H^2_q(B_R)}\|H_h(\cdot, t)\|_{H^2_\infty(B_R)}).
\end{align*}
Since we assume that $2/p + N/q < 1$, we can choose $\kappa > 0$ so small that
 $2+N/q + \kappa -1/q < 3-1/p-1/q$ and $1 + N/q + \kappa < 2(1-1/p)$,
 and then by Sobolev's inequality and \eqref{real:1} we have 
\begin{equation}\label{real:2}\begin{aligned}
&\sup_{t \in (0, 2\pi)}\|\bv(\cdot, t)\|_{H^{1}_\infty(B_R)}
\leq C\sup_{t \in (0, 2\pi)}\|\bv(\cdot, t)\|_{B^{2(1-1/p)}_{q,p}(B_R)}
\leq C\epsilon; \\
&\sup_{t \in (0, 2\pi)}\|H_h(\cdot, t)\|_{H^2_\infty(B_R)} 
 \leq C\sup_{t \in (0, 2\pi)}\|h(\cdot, t)\|_{B^{3-1/p-1/q}_{q,p}(S_R)}
\leq C\epsilon, 
\end{aligned}\end{equation}
where we have used \eqref{ext:1} in the last inequality. Then, in particular, using again \eqref{ext:1},
we have
$$
\|\bn_t-\bn\|_{W^{2-1/q}_q(S_R)} \leq C\|H_h(\cdot, t)\|_{H^3_q(B_R)} 
\leq C\|h(\cdot, t)\|_{W^{3-1/q}_q(S_R)}.
$$
Thus, applying \eqref{5.3} to the formula in \eqref{kin:1*}
and using \eqref{space:1} and \eqref{sob:1} gives that
\begin{equation}\label{nd:1}\begin{aligned}
\|d(\bv, h)\|_{L_p((0, 2\pi), W^{2-1/q}_q(S_R))}
&\leq C\epsilon(\|\bv\|_{L_\infty((0, 2\pi), H^1_q(B_R))}
+\|\bv\|_{L_p((0, 2\pi), H^2_q(B_R))}\\
&\qquad+ \|\pd_th\|_{L_\infty((0, 2\pi), W^{1-1/q}_q(S_R))}
+\|\partial_t h\|_{L_p((0, 2\pi), W^{2-1/q}_q(S_R))})\\
& \leq C\epsilon^2.
\end{aligned}\end{equation}
On the other hand, by \eqref{5.2}, 
$$\|h(\cdot, t)\|_{L_\infty(S_R)} \leq C \|H_h(\cdot, t)\|_{L_\infty(B_R)}
\leq C\epsilon,
$$
and so
$$\Bigl|\int_{S_R}h^k\,d\omega\Bigr| \leq C\epsilon^2, \quad 
\Bigl|\int_{S_R}h^k\omega\,d\omega\Bigr| \leq C\epsilon^2
\quad\text{for $k \geq 2$}, 
$$
which, combined with \eqref{nd:1}, leads to 
\begin{equation}\label{non:4}
\|\tilde d(\bv, h)\|_{L_p((0, 2\pi), W^{2-1/q}_q(S_R))}
\leq C\epsilon^2.
\end{equation}

We next consider $\bg(\bv, h)$ given in \eqref{form:g}, where $\rho$ is replaced by
$h$.  We may write
$$\bg(\bv, h) = \bV_\bg(\bk)(H_h, \nabla H_h)\otimes\bv.
$$
where $\bk$ denotes variables corresponding to $(H_h, \nabla H_h)$ and $\bV_\bg$
is a $C^\infty$ function defined on $|\bk| < \delta$.  We write
$$\pd_t\bg(\bv, h) = \bV_\bg'(\bk)\pd_t(H_h, \nabla H_h)\otimes(H_h, \nabla H_h)\otimes\bv
+ \bV_\bg(\bk)\pd_t(H_h, \nabla H_h)\otimes\bv +
\bV_\bg(\bk)(H_h, \nabla H_h)\otimes\pd_t\bv,
$$
and so, by \eqref{5.2}, \eqref{ext:1}, we have 
\begin{equation}\label{non:5}\begin{aligned}
\|\pd_t\bg(\bv, h)\|_{L_p((0, 2\pi), L_q(B_R))}
& \leq C(\|\bv\|_{L_\infty((0, 2\pi), H^1_q(B_R))}+\|h\|_{L_\infty((0, 2\pi), W^{2-1/q}_q(S_R))}) \\
&\hskip1cm \times (\|h\|_{H^1_p((0, 2\pi), W^{2-1/q}_q(S_R))}
+ \|\pd_t\bv\|_{L_p((0, 2\pi), L_q(B_R))}) \\
&\leq C\epsilon^2.
\end{aligned}\end{equation}

We next estimate $g(\bv, h)$ and $\bh(\bv, h)=(\bh'(\bv,h),h_N(\bv,h))$ given in \eqref{form:g}, \eqref{58} and \eqref{non:g.5},  where $\rho$ is replaced by $h$.
We may write
$$g(\bv, h) = V_g(\bk)(H_h, \nabla H_h)\otimes\nabla\bv,
$$
where $\bk$ are variables corresponding to $(H_h, \nabla H_h)$ and 
$V_g(\bk)$ is some matrix of $C^\infty$ functions
defined on $|\bk| < \delta$.  To estimate $g$, we
use the following two lemmas. 
\begin{lem}\label{lem:half1} Let $1 <p < \infty$ and $N < q < \infty$. 
Let 
\begin{align*}
f & \in H^1_{\infty, {\rm per}}((0, 2\pi), L_q(B_R)) \cap L_{\infty, {\rm per}}((0, 2\pi), H^1_q(B_R)),
\\
g & \in H^{1/2}_{p, {\rm per}}((0, 2\pi), L_q(B_R)) \cap L_{p, {\rm per}}((0, 2\pi), H^1_q(B_R)).
\end{align*}
Then we have
\begin{equation}\label{complex:0}\begin{aligned}
&\|fg\|_{H^{1/2}_p((0, 2\pi), L_q(B_R))} + \|fg\|_{L_p((0, 2\pi), H^1_q(B_R))} \\
&\quad \leq C(\|f\|_{H^1_\infty((0, 2\pi), L_q(B_R))} + \|f\|_{L_\infty((0, 2\pi), H^1_q(B_R))})^{1/2}
\|f\|_{L_\infty((0, 2\pi), H^1_q(B_R))}^{1/2}\\
&\qquad\times(\|g\|_{H^{1/2}_p((0, 2\pi), L_q(B_R))} + \|g\|_{L_p((0, 2\pi), H^1_q(B_R))})
\end{aligned}\end{equation}
for some constant $C > 0$.
\end{lem}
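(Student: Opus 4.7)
The plan is to estimate the two summands on the left-hand side separately, treating the spatial $H^1_q$ part by a Banach-algebra argument and the temporal $H^{1/2}_p$ part by complex interpolation combined with a paraproduct-type decomposition.

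For the $L_p(H^1_q)$ part, I would exploit that the hypothesis $N<q$ makes $H^1_q(B_R)$ a Banach algebra (via the Sobolev embedding $H^1_q(B_R) \hookrightarrow L_\infty(B_R)$). Pointwise in $t$ this gives $\|f(t)g(t)\|_{H^1_q(B_R)} \leq C\|f(t)\|_{H^1_q(B_R)}\|g(t)\|_{H^1_q(B_R)}$, and integrating yields $\|fg\|_{L_p((0,2\pi),H^1_q(B_R))} \leq C\|f\|_{L_\infty((0,2\pi),H^1_q(B_R))}\|g\|_{L_p((0,2\pi),H^1_q(B_R))}$. Because the factor $(\|f\|_{H^1_\infty(L_q)}+\|f\|_{L_\infty(H^1_q)})^{1/2}\|f\|_{L_\infty(H^1_q)}^{1/2}$ already dominates $\|f\|_{L_\infty(H^1_q)}$, this is absorbed into the bound claimed in the lemma.

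The main work is the $H^{1/2}_p(L_q)$ estimate. The approach is to view multiplication by $f$ as a bilinear operator and to interpolate. I would use the complex-interpolation identity
\[
H^{1/2}_{p,{\rm per}}((0,2\pi),L_q(B_R))=\bigl[L_{p,{\rm per}}((0,2\pi),L_q(B_R)),\,H^1_{p,{\rm per}}((0,2\pi),L_q(B_R))\bigr]_{1/2},
\]
which is valid because $L_q(B_R)$ is UMD for $1<q<\infty$. At the $L_p(L_q)$ endpoint the estimate $\|fg\|_{L_p(L_q)}\leq \|f\|_{L_\infty(L_\infty)}\|g\|_{L_p(L_q)}\leq C\|f\|_{L_\infty(H^1_q)}\|g\|_{L_p(L_q)}$ is immediate from Sobolev embedding in space. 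At the $H^1_p(L_q)$ endpoint, one expands $\partial_t(fg)=(\partial_tf)g+f(\partial_tg)$ and controls the first summand by $\|\partial_tf\|_{L_\infty(L_q)}\|g\|_{L_p(L_\infty)}\leq C\|f\|_{H^1_\infty(L_q)}\|g\|_{L_p(H^1_q)}$, again using Sobolev embedding in space.

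The genuine obstacle is that $g$ is not assumed to lie in $H^1_p(L_q)$, so a direct endpoint-to-endpoint interpolation is unavailable for the second summand $f\partial_tg$. The plan is to bypass this by a paraproduct-type splitting with respect to the time Fourier series: writing $fg$ via its frequencies $\mathcal{F}_\mathbb{T}[fg](k)=\sum_{j\in\mathbb{Z}}\mathcal{F}_\mathbb{T}[f](k-j)\mathcal{F}_\mathbb{T}[g](j)$ and decomposing the sum into the three pieces $|k-j|\ll|j|$, $|k-j|\gg|j|$, $|k-j|\sim|j|$, then applying the symbol $(1+k^2)^{1/4}$. In each piece the multiplier $(1+k^2)^{1/4}$ can be absorbed either by the temporal regularity of $f$ (giving a factor controlled by $\|f\|_{L_\infty(H^1_q)}^{1/2}(\|f\|_{H^1_\infty(L_q)}+\|f\|_{L_\infty(H^1_q)})^{1/2}$ via interpolation $H^{1/2}_\infty(L_\infty)\subset[L_\infty(L_\infty),H^1_\infty(L_\infty)]_{1/2}$) or by the temporal regularity of $g$ (giving a factor $\|g\|_{H^{1/2}_p(L_q)}+\|g\|_{L_p(H^1_q)}$). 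The $L_p(L_q)$-boundedness of the resulting operator-valued Fourier multipliers on $\mathbb{T}$ is then obtained via Theorem \ref{Weis} and Theorem \ref{ESK} (the transference theorem), exactly as in the linear construction of Section \ref{sec:4}. Summing the three pieces produces the announced product estimate with its characteristic $1/2$-power structure on $f$.
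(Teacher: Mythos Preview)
Your treatment of the $L_p((0,2\pi),H^1_q(B_R))$ summand is correct and matches the paper exactly.

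For the $H^{1/2}_p((0,2\pi),L_q(B_R))$ summand, the obstacle you identify is an artifact of interpolating along the purely temporal scale $[L_p(L_q),H^1_p(L_q)]_{1/2}$. The paper avoids it by a much simpler device: it interpolates along the \emph{diagonal} scale that also carries spatial regularity, namely
\[
H^{1/2}_{p,\mathrm{per}}((0,2\pi),L_q(B_R))\cap L_{p,\mathrm{per}}((0,2\pi),H^{1/2}_q(B_R))
=\bigl(L_{p,\mathrm{per}}((0,2\pi),L_q(B_R)),\,H^1_{p,\mathrm{per}}((0,2\pi),L_q(B_R))\cap L_{p,\mathrm{per}}((0,2\pi),H^1_q(B_R))\bigr)_{1/2}.
\]
On this scale both endpoint bounds for the linear operator $M_f:g\mapsto fg$ close: at the bottom $\|fg\|_{L_p(L_q)}\leq C\|f\|_{L_\infty(H^1_q)}\|g\|_{L_p(L_q)}$, and at the top the troublesome term $(\partial_t f)g$ is handled because $g$ now lies in $L_p(H^1_q)\subset L_p(L_\infty)$, so $\|(\partial_t f)g\|_{L_p(L_q)}\leq\|\partial_t f\|_{L_\infty(L_q)}\|g\|_{L_p(L_\infty)}$. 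Operator interpolation then gives the bound on the $1/2$-space directly, with the correct $A_0^{1/2}A_1^{1/2}$ factor, and one finishes by noting $\|g\|_{L_p(H^{1/2}_q)}\leq C\|g\|_{L_p(H^1_q)}$. No paraproduct, no transference, no $\CR$-bounds.

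Your paraproduct route is substantially more involved, and the sketch has real gaps. In the high--low piece you need something like $\Lambda^{1/2}f\in L_\infty((0,2\pi),X)$ paired with $g\in L_p((0,2\pi),Y)$ via H\"older in space; but the Bessel-potential space $H^{1/2}_\infty$ is not defined in this framework (the paper restricts $H^\theta_p$ to $1<p<\infty$), and the claimed inclusion $[L_\infty(L_\infty),H^1_\infty(L_\infty)]_{1/2}\supset H^{1/2}_\infty(L_\infty)$ is not justified. Moreover the $\CR$-boundedness of the paraproduct multipliers on $\BT$ is asserted rather than verified. These points can probably be repaired, but the paper's interpolation argument makes all of this unnecessary.
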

\begin{proof}
By \eqref{sob:1}, we have
\begin{equation}\label{complex:2}
\|fg\|_{L_p((0, 2\pi), H^1_q(B_R))}
\leq \|f\|_{L_\infty((0, 2\pi), H^1_q(B_R))}\|g\|_{L_p((0, 2\pi), H^1_q(B_R))}.
\end{equation}
To estimate the $H^{1/2}$ norm, we use the complex interpolation relation:
\begin{equation}\label{complex:1}\begin{aligned}
&H^{1/2}_{p, {\rm per}}((0, 2\pi), L_q(B_R)) \cap L_{p, {\rm per}}((0, 2\pi), H^{1/2}_q(B_R))
\\
&\quad =
\big(L_{p, {\rm per}}((0, 2\pi), L_q(B_R)), 
H^1_{p, {\rm per}}((0, 2\pi), L_q(B_R)) \cap L_{p, {\rm per}}((0, 2\pi), H^1_q(B_R))\big)_{1/2}
\end{aligned}\end{equation}
where $(\cdot, \cdot)_{1/2}$ denotes a complex interpolation of order $1/2$. 
By \eqref{sob:1}, we have 
\begin{align*}
\|fg\|_{H^1_p((0, 2\pi), L_q(B_R))} &\leq C(\|\pd_tf\|_{L_\infty((0, 2\pi), L_q(B_R))}
+ \|f\|_{L_\infty((0, 2\pi), H^1_q(B_R))})\\
&\qquad\times
(\|g\|_{L_p((0, 2\pi), H^1_q(B_R))} + \|\partial_t g\|_{L_p((0, 2\pi), L_q(B_R))}), \\
\|fg\|_{L_p((0, 2\pi), L_q(B_R))}
&\leq C\|f\|_{L_\infty((0, 2\pi), H^1_q(B_R))}\|g\|_{L_p((0, 2\pi), L_q(B_R))}.&
\end{align*}
Thus, by \eqref{complex:1}, we have
\begin{equation}\label{complex:3}\begin{aligned}
\|fg\|_{H^{1/2}_p((0, 2\pi), L_q(B_R))} 
& \leq C
(\|f\|_{H^1_\infty((0, 2\pi), L_q(B_R))} + \|f\|_{L_\infty((0, 2\pi), H^1_q(B_R))})^{1/2}
 \|f\|_{L_\infty((0, 2\pi), H^1_q(B_R))}^{1/2}\\
&\qquad \times(\|g\|_{H^{1/2}_p((0, 2\pi), L_q(B_R))} + \|g\|_{L_p((0, 2\pi), H^{1/2}_q(B_R))})
\end{aligned}\end{equation}
Since $\|g\|_{L_p((0, 2\pi), H^{1/2}_q(B_R))} \leq C\|g\|_{L_p((0, 2\pi), H^1_q(B_R))}$, 
combining \eqref{complex:2} and \eqref{complex:3} leads to \eqref{complex:0},
which completes the proof of Lemma \ref{lem:half1}.
\end{proof}
\begin{lem}\label{lem:half2} Let $1 < p, q < \infty$.  Then, there exists a constant $C$ such that
for any $u$ with
$$u \in H^1_{p, {\rm per}}((0, 2\pi), L_q(B_R)) \cap 
L_{p, {\rm per}}((0, 2\pi), H^2_q(B_R)),$$
we have
\begin{equation}\label{complex:4}
\|u\|_{H^{1/2}_p((0, 2\pi), H^1_q(B_R))}
\leq C(\|u\|_{H^{1}_p((0, 2\pi), L_q(B_R))} + 
\|u\|_{L_p((0, 2\pi), H^2_q(B_R))})
\end{equation}
for some constant $C > 0$.
\end{lem}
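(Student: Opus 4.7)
\textbf{Proof plan for Lemma \ref{lem:half2}.}
The plan is to deduce \eqref{complex:4} from the complex interpolation identity
\begin{equation}\label{cplx:goal}
\big[L_{p,{\rm per}}((0,2\pi), H^2_q(B_R)),\ H^1_{p,{\rm per}}((0,2\pi), L_q(B_R))\big]_{[1/2]}
\hookrightarrow H^{1/2}_{p,{\rm per}}((0,2\pi), H^1_q(B_R)),
\end{equation}
followed by the standard interpolation inequality
$\|u\|_{[X_0, X_1]_{[1/2]}} \leq C\|u\|_{X_0}^{1/2}\|u\|_{X_1}^{1/2}$
and Young's inequality $ab \leq \tfrac12(a^2+b^2)$. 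Once \eqref{cplx:goal} is in hand, taking $X_0 = L_{p,{\rm per}}((0,2\pi), H^2_q(B_R))$ and $X_1 = H^1_{p,{\rm per}}((0,2\pi), L_q(B_R))$ yields the claim directly.

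To establish \eqref{cplx:goal}, I would first reduce to a problem on $\BR^N$ by means of a bounded linear extension operator $E\colon H^k_q(B_R) \to H^k_q(\BR^N)$ valid for $k = 0,1,2$ (Stein's universal extension), applied pointwise in $t$. On $\BR^N$, the target norm is equivalent to $\|(I-\Delta_x)^{1/2}u\|_{L_p((0,2\pi), L_q(\BR^N))}$, and the $L_q$-scale on $\BR^N$ interacts cleanly with Fourier multipliers in the spatial variable. The identity \eqref{cplx:goal} then amounts to showing $L_p$-boundedness on $\BR$ of the operator-valued Fourier multiplier
$$m(\tau) \;=\; \frac{(1+\tau^2)^{1/4}\,(I-\Delta_x)^{1/2}}{(1+\tau^2)^{1/2} + (I-\Delta_x)}
\quad\in\ \CL(L_q(\BR^N)), \qquad \tau \in \BR,$$
which can be obtained via Weis' Theorem \ref{Weis}, using the fact that $L_q(\BR^N)$ is UMD for $1 < q < \infty$ and that $(I-\Delta_x)$ admits a bounded $H^\infty$-calculus on $L_q(\BR^N)$ (so that the Kalton--Weis joint functional calculus gives the required $\CR$-boundedness of $\{m(\tau),\, \tau\partial_\tau m(\tau)\mid\tau\in\BR\setminus\{0\}\}$). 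Finally, the transference theorem, Theorem \ref{ESK}, transports this $L_p$-multiplier estimate from $\BR$ to the torus $\BT$, yielding the periodic interpolation identity \eqref{cplx:goal}.

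The main obstacle is the verification of the $\CR$-boundedness of the symbol $m(\tau)$ and of $\tau\partial_\tau m(\tau)$ uniformly in $\tau$. This is a routine application of operator sum/$H^\infty$-calculus techniques for the pair of commuting sectorial operators $(I-\Delta_x)$ and multiplication by $(1+\tau^2)^{1/2}$, but it is the only place where the UMD/functional-calculus structure is genuinely invoked. Everything else — the choice of extension $E$, the reduction to $\BR^N$, the transference step, and the concluding interpolation/Young inequalities — is essentially bookkeeping.
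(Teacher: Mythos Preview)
Your approach is correct and reaches the same endpoint as the paper's proof --- Weis' operator-valued multiplier theorem plus the transference theorem --- but the route to the required $\CR$-boundedness is genuinely different. The paper does not invoke complex interpolation, an extension to $\BR^N$, or $H^\infty$-calculus at all: instead it quotes a concrete decomposition from \cite{S4}, namely operators $\Phi_1(\lambda)\in\CL(L_q(B_R),L_q(B_R)^N)$ and $\Phi_2(\lambda)\in\CL(H^2_q(B_R),L_q(B_R)^N)$ with
\[
(1+\lambda^2)^{1/4}\nabla g=\Phi_1(\lambda)\,(1+\lambda^2)^{1/2}g+\Phi_2(\lambda)\,g,
\]
and ready-made $\CR$-bounds for $\{(\lambda\partial_\lambda)^\ell\Phi_j(\lambda)\}$, $\ell=0,1$. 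This works directly on $B_R$ and reduces the lemma to a one-line application of Theorems \ref{Weis} and \ref{ESK}. Your argument trades this black-box citation for a self-contained (but heavier) verification via Stein extension and the bounded $H^\infty$-calculus of $I-\Delta_x$ on $L_q(\BR^N)$; the complex interpolation framing is in fact dispensable, since your multiplier $m(\tau)$ already gives the estimate \eqref{complex:4} directly once its $\CR$-boundedness is established. Either way is fine: the paper's proof is shorter if one accepts \cite{S4}, yours is more transparent about where the $\CR$-bounds come from.
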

\begin{proof}
As was proved in the proof of Proposition 1 in Shibata \cite{S4},
there exist two operators $\Phi_1$ and $\Phi_2$ with
$$\Phi_1 \in C^1(\BR\setminus\{0\}, \CL(L_q(B_R), L_q(B_R)^N)), \quad
\Phi_2 \in C^1(\BR\setminus\{0\}, \CL(H^2_q(B_R), L_q(B_R)^N)
$$
such that for any $g \in H^2_q(B_R)$, we have
$$(1+\lambda^2)^{1/4}\nabla g =\Phi_1(\lambda)(1+\lambda^2)^{1/2}g + 
\Phi_2(\lambda)g,$$
and 
\begin{align*}
&\CR_{\CL(L_q(B_R), L_q(B_R)^N)}
(\{(\lambda\pd_\lambda)^{\ell}\Phi_1(\lambda) \mid \lambda \in \BR\setminus\{0\}\})
\leq r_b, \\
&\CR_{\CL(H^2_q(B_R), L_q(B_R)^N)}
(\{(\lambda\pd_\lambda)^{\ell}\Phi_1(\lambda) \mid \lambda \in \BR\setminus\{0\}\})
\leq r_b,
\end{align*}
for $\ell=0,1$ with some constant $r_b$. Thus, by Weis' operator-valued
Fourier multiplier theorem, Theorem \ref{Weis},  and transference theorem, Theorem \ref{ESK}, 
we have \eqref{complex:4}, which completes the proof of Lemma \ref{lem:half2}. 
\end{proof}
By \eqref{space:1}, \eqref{ext:1}, \eqref{sob:1} and \eqref{real:2}, we have
\begin{align*}
&\|\pd_tV_g(\bk)(H_h, \nabla H_h)\|_{L_\infty((0, 2\pi), L_q(B_R))}
\leq C\|h\|_{H^1_p((0, 2\pi), W^{1-1/q}_q(B_R))} \leq C\epsilon, \\
&\|V_g(\bk)(H_h, \nabla H_h)\|_{L_\infty((0, 2\pi), H^1_q(B_R))}
\leq C\|H_h\|_{L_\infty((0, 2\pi), H^2_q(B_R))}
\leq C\epsilon.
\end{align*}
Thus, by Lemma \ref{lem:half1}, Lemma \ref{lem:half2},
and \eqref{space:1}, we have
\begin{equation}\label{non:6}\begin{aligned}
&\|g(\bv, h)\|_{H^{1/2}_p((0, 2\pi), L_q(B_R))} 
+ \|g(\bv, h)\|_{L_p((0, 2\pi), H^1_q(B_R))}
\\
&\quad\leq
C\epsilon(\|\nabla\bv\|_{H^{1/2}_p((0, 2\pi), L_q(B_R))}
+ \|\nabla\bv\|_{L_p((0, 2\pi), H^1_q(B_R))})\\
&\quad \leq C\epsilon(\|\bv\|_{L_p((0, 2\pi), H^2_q(B_R))}
+ \|\pd_t\bv\|_{L_p((0, 2\pi), L_q(B_R))}) \\
&\quad \leq C\epsilon^2.
\end{aligned}\end{equation}

Analogously, recalling the definition of $\bh(\bv, h)=(\bh'(\bv,h),h_N(\bv,h))$ given in
\eqref{58} and \eqref{non:g.5}, where $\rho$ is replaced by $h$, and using
Lemma \ref{lem:half1}, we have
\begin{align*}
&\|\bh(\bv, h)\|_{H^{1/2}_p((0, 2\pi), L_q(B_R))}
+ \|\bh(\bv, h)\|_{L_p((0, 2\pi), H^1_q(B_R))} \\
&\quad 
\leq C\epsilon(\|\nabla\bv\|_{H^{1/2}_p((0, 2\pi), L_q(B_R))}
+ \|\nabla\bv\|_{L_p((0, 2\pi), H^1_q(B_R))} \\
&\qquad+
\|\overline\nabla^2H_h\|_{H^{1/2}_p((0, 2\pi), L_q(B_R))}
+ \|\overline\nabla^2H_h\|_{L_p((0, 2\pi), H^1_q(B_R))}).
\end{align*}
Since $H^{1/2}_p((0, 2\pi), L_q(B_R)) \supset H^1_p((0, 2\pi), L_q(B_R))$, 
we have
$$\|\overline{\nabla}^2H_h\|_{H^{1/2}_p((0, 2\pi), L_q(B_R))}
\leq C\|\overline{\nabla}^2H_h\|_{H^1_p((0, 2\pi), L_q(B_R))},
$$
and so using Lemma \ref{lem:half2}, \eqref{ext:1}, and \eqref{space:1},  we have
\begin{equation}\label{non:7}\begin{aligned}
&\|\bh(\bv, h)\|_{H^{1/2}_p((0, 2\pi), L_q(B_R))}
+ \|\bh(\bv, h)\|_{L_p((0, 2\pi), H^1_q(B_R))} \\
&\quad 
\leq C\epsilon(\|\bv\|_{H^1_p((0, 2\pi), L_q(B_R))}
+ \|\bv\|_{L_p((0, 2\pi), H^2_q(B_R))} \\
&\qquad+
\|\pd_t H_h\|_{L_p((0, 2\pi), H^2_q(B_R))}
+ \|H_h\|_{L_p((0, 2\pi), H^3_q(B_R))})
\\
&\quad
\leq C\epsilon^2.
\end{aligned}\end{equation}
Combining \eqref{non:2}, \eqref{non:4}, \eqref{non:5}, 
\eqref{non:6}, and \eqref{non:7} gives \eqref{main:ne:1}. 
Applying Theorem \ref{thm:linear.main1} to equations \eqref{5.1}
and using \eqref{main:ne:1} and \eqref{non:3} gives that
\begin{equation}\label{non:8}
\begin{aligned}
&\|\bu\|_{L_p((0, 2\pi), H^2_q(B_R))} + \|\pd_t\bu\|_{L_p((0, 2\pi), L_q(B_R))}\\
&\quad + \|\rho\|_{L_p((0, 2\pi), W^{3-1/q}_q(S_R))}
+ \|\pd_t\rho\|_{L_p((0, 2\pi), W^{2-1/q}_q(S_R))} 
\leq M_1\|\bff\|_{L_p((0, 2\pi), L_q(D))} + M_2\epsilon^2
\end{aligned}\end{equation}
for some constants $M_1$ and $M_2$ independent of $\epsilon \in (0, 1)$. 
Finally, we estimate $\|\pd_t\rho\|_{L_\infty((0, 2\pi), W^{1-1/q}_q(S_R))}$.  From 
the third equation in equations \eqref{5.1}, we have
$$\|\pd_t\rho\|_{W^{1-1/q}_q(S_R)}
\leq \|\CM\rho\|_{W^{1-1/q}_q(S_R)}
+ \|\CA\bu\|_{W^{1-1/q}_q(S_R)} +\| \tilde d(\bv, h)\|_{W^{1-1/q}_q(S_R)}.
$$
Therefore,  by \eqref{space:1}, \eqref{sob:1}, \eqref{real:1}, \eqref{5.2},
\eqref{5.3}, and \eqref{5.4}, we have
\begin{align*}
|\pd_t\rho\|_{L_\infty((0, 2\pi), W^{1-1/q}_q(S_R))}
&\leq C(\|\bu\|_{L_p((0, 2\pi), H^2_q(B_R))} 
+ \|\pd_t\bu\|_{L_p((0, 2\pi), L_q(B_R))}\\
&\qquad+ \|\rho\|_{L_p((0, 2\pi), W^{3-1/q}_q(S_R))}
+ \|\pd_t\rho\|_{L_p((0, 2\pi), W^{2-1/q}_q(S_R))}  +  \epsilon^2),
\end{align*}
which, combined with \eqref{non:8},  leads to 
\begin{equation}\label{non:9}
\begin{aligned}
E(\bu, \rho) 
\leq M_1'\|\bff\|_{L_p((0, 2\pi), L_q(D))} + M_2'\epsilon^2
\end{aligned}\end{equation}
for some constants $M_1'$ and $M_2'$ independent of $\epsilon \in (0, 1)$. 
We choose $\epsilon > 0$ so small that $M'_2\epsilon < 1/2$ and 
we assume that 
$M_1'\|\bff\|_{L_p((0, 2\pi), L_q(D))} \leq \epsilon/2$.
Then we have
\begin{equation}\label{non:10}
E(\bu, \rho) \leq \epsilon.
\end{equation}
Moreover, by \eqref{ext:1} and \eqref{real:1}, we have
$$\sup_{t \in (0, 2\pi)} \|H_\rho\|_{H^1_\infty(B_R))} 
\leq C\|\rho\|_{L_\infty((0, 2\pi), W^{1-1/q}_q(S_R))}
\leq M_3E(\bu, \rho)
\leq M_3\epsilon.
$$
Choosing $\epsilon > 0$ smaller if necessary, we may assume that
$0 < M_3\epsilon < \delta$, and so $(\bu, \rho) \in \CI_\epsilon$.
If we define a map $\Phi$ acting on $(\bv, h) \in \CI_\epsilon$
by setting $\Phi(\bv, h) = (\bu, \rho)$, and then 
$\Phi$ is a map from $\CI_\epsilon$ into itself. 
Employing a similar argument as for proving \eqref{non:10}, 
we see that for any $(\bv_i, h_i) \in \CI_\epsilon$ ($i = 1,2$),  
$$E(\Phi(v_1, h_1) - \Phi(v_2, h_2))
\leq M_4\epsilon E((\bv_1, h_1) - (\bv_2, h_2)).
$$
Choosing $\epsilon > 0$ smaller if necessary, we may assume that
$M_4\epsilon \leq 1/2$, and so $\Phi$ is a contraction map on
$\CI_\epsilon$.   The Banach fixed-point theorem yields the unique
existence of a fixed point $(\bv, \rho) \in \CI_\epsilon$ of the 
map $\Phi$, that is $(\bv, \rho) = \Phi(\bv, \rho)$, which is the
required solution of equations \eqref{main:eq1}.  This completes the 
proof of Theorem \ref{main:thm1}.

\end{document}